   \let\temp\relax
   \let\temp 
 \chardef\EPSFCatAt\the\catcode`\@
 \chardef\C@tColon\the\catcode`\:
 \chardef\C@tSemicolon\the\catcode`\;
 \chardef\C@tQmark\the\catcode`\?
 \chardef\C@tEmark\the\catcode`\!
 \chardef\C@tDqt\the\catcode`\"
 \def\PunctOther@{\catcode`\:=12
   \catcode`\;=12 \catcode`\?=12 \catcode`\!=12 \catcode`\"=12}
 \let\wlog@ld\wlog 
 \def\wlog#1{\relax} 
 \newdimen\XShift@ \newdimen\YShift@ 
 \newtoks\Realtoks
 \newdimen\Wd@ \newdimen\Ht@
 \newdimen\Wd@@ \newdimen\Ht@@
 \newdimen\TT@
 \newdimen\LT@
 \newdimen\BT@
 \newdimen\RT@
 \newdimen\XSlide@ \newdimen\YSlide@ 
 \newdimen\TheScale  %% secretly scale in mils: 1pt= 1mil 
 \newdimen\FigScale  %% secretly scale in mils: 1pt= 1mil 
 \newdimen\ForcedDim@@
 \newtoks\EPSFDirectorytoks@
 \newtoks\EPSFNametoks@
 \newtoks\BdBoxtoks@
 \newtoks\LLXtoks@  %% useful info for Oz
 \newtoks\LLYtoks@
 \newif\ifNotIn@
 \newif\ifForcedDim@
 \newif\ifForceOn@
 \newif\ifForcedHeight@
 \newif\ifPSOrigin
 \newread\EPSFile@ 
  \def\ms@g{\immediate\write16}
 \newif\ifIN@\def\IN@{\expandafter\INN@\expandafter}
  \long\def\INN@0#1@#2@{\long\def\NI@##1#1##2##3\ENDNI@
    {\ifx\m@rker##2\IN@false\else\IN@true\fi}%
     \expandafter\NI@#2@@#1\m@rker\ENDNI@}
  \def\m@rker{\m@@rker}
  \newtoks\Initialtoks@  \newtoks\Terminaltoks@
  \def\SPLIT@{\expandafter\SPLITT@\expandafter}
  \def\SPLITT@0#1@#2@{\def\TTILPS@##1#1##2@{%
     \Initialtoks@{##1}\Terminaltoks@{##2}}\expandafter\TTILPS@#2@}
  \newtoks\Trimtoks@
 \def\ForeTrim@{\expandafter\ForeTrim@@\expandafter}
 \def\ForePrim@0 #1@{\Trimtoks@{#1}}
 \def\ForeTrim@@0#1@{\IN@0\m@rker. @\m@rker.#1@%
     \ifIN@\ForePrim@0#1@%
     \else\Trimtoks@\expandafter{#1}\fi}
  \def\Trim@0#1@{%
      \ForeTrim@0#1@%
      \IN@0 @\the\Trimtoks@ @%
        \ifIN@ 
             \SPLIT@0 @\the\Trimtoks@ @\Trimtoks@\Initialtoks@
             \IN@0\the\Terminaltoks@ @ @%
                 \ifIN@
                 \else \Trimtoks@ {FigNameWithSpace}%
                 \fi
        \fi
      }
   \newtoks\pt@ks
   \def\getpt@ks 0.0#1@{\pt@ks{#1}}
  \newtoks\Realtoks% the output!
  \def\Real#1{%
    \dimen2=#1%
      \SPLIT@0\the\pt@ks @\the\dimen2@%%  lop off the points
       \Realtoks=\Initialtoks@%\showthe\Realtoks
            }
   \newdimen\Product
   \def\Mult#1#2{%
     \dimen4=#1\relax
     \dimen6=#2%
     \Real{\dimen4}%
     \Product=\the\Realtoks\dimen6%
        }
 \newdimen\Inverse
 \newdimen\hmxdim@ \hmxdim@=8192pt%halfmaxdimen
 \def\Invert#1{%
  \Inverse=\hmxdim@
  \dimen0=#1%
  \divide\Inverse \dimen0%
  \multiply\Inverse 8}
   \def\Rescale#1#2#3{% Adequate accuracy. Can improve. 
              \divide #1 by 100\relax
              \dimen2=#3\divide\dimen2 by 100 \Invert{\dimen2}% 
              \Mult{#1}{#2}%
              \Mult\Product\Inverse 
              #1=\Product}
  \def\Scale#1{\dimen0=\TheScale %
      \divide #1 by  1280 %% 1280*5120*10=1000*2^16 
      \divide \dimen0 by 5120 % 
      \multiply#1 by \dimen0 
      \divide#1 by 10   %% max size of #1 about 32000/10 pt
     }
 \newbox\scrunchbox
 \def\Scrunched#1{{\setbox\scrunchbox\hbox{#1}%
   \wd\scrunchbox=0pt
   \ht\scrunchbox=0pt
   \dp\scrunchbox=0pt
   \box\scrunchbox}}
 \def\Shifted@#1{%
   \vbox {\kern-\YShift@
       \hbox {\kern\XShift@\hbox{#1}\kern-\XShift@}%
           \kern\YShift@}}
 \def\cBoxedEPSF#1{{\leavevmode 
    %% double brace for amstex \allign, \alligned, ...
   \ReadNameAndScale@{#1}%
   \SetEPSFSpec@
   \ReadEPSFile@ \ReadBdB@x  
   %% Calculations
     \TrimFigDims@ 
     \CalculateFigScale@  
     \ScaleFigDims@
     \SetInkShift@
   \hbox{$\mathsurround=0pt\relax
         \vcenter{\hbox{%
             \FrameSpider{\hskip-.4pt\vrule}%
             \vbox to \Ht@{\offinterlineskip\parindent=\z@%
                \FrameSpider{\vskip-.4pt\hrule}\vfil 
                \hbox to \Wd@{\hfil}%
                \vfil
                \InkShift@{\EPSFSpecial{\EPSFSpec@}{\FigSc@leReal}}%
             \FrameSpider{\hrule\vskip-.4pt}}%
         \FrameSpider{\vrule\hskip-.4pt}}}%
     $\relax}%
    \CleanRegisters@ 
    \ms@g{ *** Box composed for the % 
         EPS file \the\EPSFNametoks@}%
    }}
 \def\tBoxedEPSF#1{\setbox4\hbox{\cBoxedEPSF{#1}}%
     \setbox4\hbox{\raise -\ht4 \hbox{\box4}}%
     \box4
      }
 \def\bBoxedEPSF#1{\setbox4\hbox{\cBoxedEPSF{#1}}%
     \setbox4\hbox{\raise \dp4 \hbox{\box4}}%
     \box4
      }
  \let\BoxedEPSF\cBoxedEPSF% default setting
   \let\BoxedArt\BoxedEPSF
  \def\gLinefigure[#1scaled#2]_#3{%
        \BoxedEPSF{#3 scaled #2}}
  \def\EPSFxsize{\afterassignment\ForceW@\ForcedDim@@}
      \def\ForceW@{\ForcedDim@true\ForcedHeight@false}
  \def\EPSFysize{\afterassignment\ForceH@\ForcedDim@@}
      \def\ForceH@{\ForcedDim@true\ForcedHeight@true}
  \def\EmulateRokicki{%
       \let\epsfbox\bBoxedEPSF \let\epsffile\bBoxedEPSF
       \let\epsfxsize\EPSFxsize \let\epsfysize\EPSFysize} 
 \def\ReadNameAndScale@#1{\IN@0 scaled@#1@% DOUBLE BARRELED
   \ifIN@\ReadNameAndScale@@0#1@%
   \else \ReadNameAndScale@@0#1 scaled\DefaultMilScale @%
   \fi}
 \def\ReadNameAndScale@@0#1scaled#2@{% HELPER MACRO
    \let\OldBackslash@\\%
    \def\\{\OtherB@ckslash}%
    \edef\temp@{#1}%
    \Trim@0\temp@ @%
    \EPSFNametoks@\expandafter{\the\Trimtoks@ }%
    \FigScale=#2 pt%
    \let\\\OldBackslash@
    }
 \def\SetDefaultEPSFScale#1{%
      \global\def\DefaultMilScale{#1}}
 \def \SetBogusBbox@{%
     \global\BdBoxtoks@{ BoundingBox:0 0 100 100 }%
     \global\def\BdBoxLine@{ BoundingBox:0 0 100 100 }%
     \ms@g{ !!! Will use placeholder !!!}%
     }
\gdef\P@S@{%!}\gdef\pct@@{%%}} %% %! min sign of PS file

 \def\ReadEPSFile@{%\show\EPSFSpec@%
     \openin\EPSFile@\EPSFSpec@
     \relax  %necessary to prevent precocious expansion of \ifeof
  \ifeof\EPSFile@
     \ms@g{}%
     \ms@g{ !!! EPS FILE \the\EPSFDirectorytoks@
       \the\EPSFNametoks@\space WAS NOT FOUND !!!}%
     \SetBogusBbox@
  \else%\fi
   \begingroup%%
   \catcode`\%=12\catcode`\:=12\catcode`\!=12
   \catcode"00=14 \catcode"7F=14 \catcode`\\=14 
    %% 14 = comment, terminates input line; 
    %% 5 = CR just picks up extra space
   \global\read\EPSFile@ to \BdBoxLine@ %\show\BdBoxLine@
   \IN@0\P@S@ @\BdBoxLine@ @%
   \ifIN@ %% %! accepted as %!PS so do BdBox search!!
     \NotIn@true
     \loop   
       \ifeof\EPSFile@\NotIn@false 
         \ms@g{}%
         \ms@g{ !!! BoundingBox NOT FOUND IN %
            \the\EPSFDirectorytoks@\the\EPSFNametoks@\space!!! }%
         \SetBogusBbox@
       \else\global\read\EPSFile@ to \BdBoxLine@
       %\show\BdBoxLine@
       \fi
       \global\BdBoxtoks@\expandafter{\BdBoxLine@}%
       \IN@0BoundingBox:@\the\BdBoxtoks@ @%
       \ifIN@\NotIn@false\fi%
     \ifNotIn@
     \repeat
   \else
         \ms@g{}%
         \ms@g{ !!! \the\EPSFNametoks@\space is not PostScript.}%
         \ms@g{ !!! It should begin with the "\P@S@". }%
         \ms@g{ !!! Also, all other header lines until }%
         \ms@g{ !!!  "\pct@@ EndComments"  should begin with "\pct@@". }%
         \SetBogusBbox@
   \fi
  \endgroup\relax
  \fi
  \closein\EPSFile@ 
   }

  %%% \ReadBdB@x
   % Rmk For simplicity 0 not used in syntax 
   %  of \ReadBdB@x@,  \ReadBdB@x@@ 
  \def\ReadBdB@x{% PART 0
   \expandafter\ReadBdB@x@\the\BdBoxtoks@ @}
  
  \def\ReadBdB@x@#1BoundingBox:#2@{% PART 1
    \ForeTrim@0#2@%
    \IN@0atend@\the\Trimtoks@ @%
       \ifIN@\Trimtoks@={0 0 100 100 }%
         \ms@g{}%
         \ms@g{ !!! BoundingBox not found in %
         \the\EPSFDirectorytoks@\the\EPSFNametoks@\space !!!}%
         \ms@g{ !!! It must not be at end of EPSF !!!}%
         \ms@g{ !!! Will use placeholder !!!}%
       \fi%% cf \SetBogusBbox@
    \expandafter\ReadBdB@x@@\the\Trimtoks@ @%
   }
    
  \def\ReadBdB@x@@#1 #2 #3 #4@{% PART 2
      \Wd@=#3bp\advance\Wd@ by -#1bp%
      \Ht@=#4bp\advance\Ht@ by-#2bp%
       \Wd@@=\Wd@ \Ht@@=\Ht@ %% useful info for Clark
       \LLXtoks@={#1}\LLYtoks@={#2}%% useful info for Oz
      \ifPSOrigin\XShift@=-#1bp\YShift@=-#2bp\fi 
     }

  %%% \SetEPSFDirectory 
   %
   \def\G@bbl@#1{}
   \bgroup
     \global\edef\OtherB@ckslash{\expandafter\G@bbl@\string\\}
   \egroup

  \def\SetEPSFDirectory{%  Part 1
           \bgroup\PunctOther@\relax
           \let\\\OtherB@ckslash
           \SetEPSFDirectory@}

 \def\SetEPSFDirectory@#1{% Part 2
    \edef\temp@{#1}%
    \Trim@0\temp@ @%  result in \Trimtoks@
    \global\toks1\expandafter{\the\Trimtoks@ }\relax
    \egroup
    \EPSFDirectorytoks@=\toks1
    }

 \def\SetEPSFSpec@{%
     \bgroup
     \let\\=\OtherB@ckslash
     \global\edef\EPSFSpec@{%
        \the\EPSFDirectorytoks@\the\EPSFNametoks@}%
     \global\edef\EPSFSpec@{\EPSFSpec@}%
     \egroup}

 %%% \TrimFigDims@ 
  % 
 \def\TrimTop#1{\advance\TT@ by #1}
 \def\TrimLeft#1{\advance\LT@ by #1}
 \def\TrimBottom#1{\advance\BT@ by #1}
 \def\TrimRight#1{\advance\RT@ by #1}

 \def\TrimBoundingBox#1{%
   \TrimTop{#1}%
   \TrimLeft{#1}%
   \TrimBottom{#1}%
   \TrimRight{#1}%
       }

 \def\TrimFigDims@{%
    \advance\Wd@ by -\LT@ 
    \advance\Wd@ by -\RT@ \RT@=\z@
    \advance\Ht@ by -\TT@ \TT@=\z@
    \advance\Ht@ by -\BT@ 
    }

 %%% \CalculateFigScale@
  %
  \def\ForceWidth#1{\ForcedDim@true
       \ForcedDim@@#1\ForcedHeight@false}
  
  \def\ForceHeight#1{\ForcedDim@true
       \ForcedDim@@=#1\ForcedHeight@true}

  \def\ForceOn{\ForceOn@true}
  \def\ForceOff{\ForceOn@false\ForcedDim@false}
  
  \def\CalculateFigScale@{%
            %Have default \FigScale or read \FigScale
     \ifForcedDim@\FigScale=1000pt% %% start afresh
           \ifForcedHeight@
                \Rescale\FigScale\ForcedDim@@\Ht@
           \else
                \Rescale\FigScale\ForcedDim@@\Wd@
           \fi
     \fi
     \Real{\FigScale}%
     \edef\FigSc@leReal{\the\Realtoks}%
     }
   
  \def\ScaleFigDims@{\TheScale=\FigScale
      \ifForcedDim@
           \ifForcedHeight@ \Ht@=\ForcedDim@@  \Scale\Wd@
           \else \Wd@=\ForcedDim@@ \Scale\Ht@
           \fi
      \else \Scale\Wd@\Scale\Ht@        
      \fi
      \ifForceOn@\relax\else\global\ForcedDim@false\fi
      \Scale\LT@\Scale\BT@  %%%\Scale\Wd@\Scale\Ht@
      \Scale\XShift@\Scale\YShift@
      }
      
  %%% \ShowDisplacementBoxes
   %%  shows (prints) corrected scaled and positioned
   %%  bounding boxes; for diagnostics
  %%% \HideDisplacementBoxes makes them invisible again
   %%
 \def\HideDisplacementBoxes{\global\def\FrameSpider##1{\null}}
 \def\ShowDisplacementBoxes{\global\def\FrameSpider##1{##1}}
 \let\HideFigureFrames\HideDisplacementBoxes %% some synonyms
 \let\ShowFigureFrames\ShowDisplacementBoxes
 \ShowDisplacementBoxes
 
  %%% \hSlide#1, \vSlide#1
   %%
 \def\hSlide#1{\advance\XSlide@ by #1}
 \def\vSlide#1{\advance\YSlide@ by #1}
 
  %%% \SetInkShift@, \InkShift@#1
   %%
  \def\SetInkShift@{%
            \advance\XShift@ by -\LT@
            \advance\XShift@ by \XSlide@
            \advance\YShift@ by -\BT@
            \advance\YShift@ by -\YSlide@
             }
  \def\InkShift@#1{\Shifted@{\Scrunched{#1}}}
 
  %%% \CleanRegisters@
   %
  \def\CleanRegisters@{%
      \globaldefs=1\relax
        \XShift@=\z@\YShift@=\z@\XSlide@=\z@\YSlide@=\z@
        \TT@=\z@\LT@=\z@\BT@=\z@\RT@=\z@
      \globaldefs=0\relax}

 %%% Special syntax for several drivers. The macros 
  %% \SetTexturesEPSFSpecial  %% Textures 
  %% \SetUnixCoopEPSFSpecial %% dvi2ps early unix 
  %% \SetBechtolsheimDVI2PSEPSFSpecial and 
  %% \SetBechtolsheimDVITPSEPSFSpecial %% by S.P.Bechtolsheim
  %% \SetLisEPSFSpecial %% dvi2ps by Tony Lis
  %% \SetRokickiEPSFSpecial  %% dvips by Tom Rokicki
  %%  --- also for DVIReader, in DirectTeX by W. Ricken
  %% \SetOzTeXEPSFSpecial  %% OzTeX (>=1.42) by Andrew Trevorrow
  %% \SetPSprintEPSFSpecial %% PSprint by Andrew Trevorrow
  %%  --- also for OzTeX versions <= 1.41 !!
  %% \SetArborEPSFSpecial  %% ArborTeX DVILASER/PS
  %% \SetClarkEPSFSpecial %% dvitops by James Clark
  %% \SetDVIPSoneEPSFSpecial %% DVIPSONE of Y&Y 
  %% \SetBeebeEPSFSpecial %% DVIALW by N. Beebe
  %% \SetNorthlakeEPSFSpecial %% Northlake Software
  %% \SetStandardEPSFSpecial %% Nonexistant: Placebo below
  %% Many drivers supported roughly
  %% by (re-)defining the macro \EPSFSpecial#1#2, where
  %% #1 = EPS file pathname (use \\ for the letter backslash)
  %% #2 = scale in mils 
  %% Be wary of using strange characters in pathnames!
 
 %% Textures, Blue Sky Research, Barry Smith
 \def\SetTexturesEPSFSpecial{\PSOriginfalse%\PSOrigintrue
  \gdef\EPSFSpecial##1##2{\relax
    \edef\specialtemp{##2}%
    \SPLIT@0.@\specialtemp.@\relax
    \special{illustration ##1 scaled
                        \the\Initialtoks@}}}
 
  %% Unix : dvi2ps by:  Mark Senn, Stephan  Bechtolsheim,  
   % Bob  Brown, Richard, Furuta, James Schaad, Robert  Wells, 
   % Norm Hutchinson, Neal Holt, Scott Jones, Howard Trickey.
   % Introduced by B. Horn <bkph@ai.mit.edu>
  \def\SetUnixCoopEPSFSpecial{\PSOrigintrue % Please test!
   \gdef\EPSFSpecial##1##2{%
      \dimen4=##2pt% convert real to dimen
      \divide\dimen4 by 1000\relax
      \Real{\dimen4}%dimens 0,2 used here
      \edef\Aux@{\the\Realtoks}%  
      %%convert dimen to real
      \includegraphics{##1\space}}}

  %% dvi2ps and dvitps by S.P. Bechtolsheim,
   % Introduced by B. Horn <bkph@ai.mit.edu> and Carl.M.Jones, 
   % testing by R. Evans <Robert@cm.cardiff.ac.uk>
   % and George Denk <denk@mathematik.tu-muenchen.de>
   % Note that a prolog file psfig.pro
   % specific to the driver should be available.
  \def\SetBechtolsheimEPSFSpecial@{%% tool macro only
   \PSOrigintrue
   \special{\DriverTag@ Include0 "psfig.pro"}%
   \gdef\EPSFSpecial##1##2{%
      \dimen4=##2pt %% convert real to dimen
      \divide\dimen4 by 1000\relax
      \Real{\dimen4} %% dimens 0,2 used here
      \edef\Aux@{\the\Realtoks}%% convert dimen to real
      \special{\DriverTag@ Literal "10 10 0 0 10 10 startTexFig
           \the\mag\space 1000 div 
           dup 3.25 neg mul 1 index .25 neg mul translate %% correction line
           \Aux@\space mul dup scale "}%
      \special{\DriverTag@ Include1 "##1"}%
      \special{\DriverTag@ Literal "endTexFig "}%
        }}

  \def\SetBechtolsheimDVITPSEPSFSpecial{\def\DriverTag@{dvitps: }%
      \SetBechtolsheimEPSFSpecial@}

  \def\SetBechtolsheimDVI2PSEPSFSSpecial{\def\DriverTag@{DVI2PS: }%
      \SetBechtolsheimEPSFSpecial@}

  %% dvi2ps by Tony Lis,
   % implantations? ; dates?; availability?
   % Introduced by B. Horn <bkph@ai.mit.edu>
  \def\SetLisEPSFSpecial{\PSOrigintrue 
   \gdef\EPSFSpecial##1##2{%
      \dimen4=##2pt% convert real to dimen
      \divide\dimen4 by 1000\relax
      \Real{\dimen4}% dimens 0,2 used here
      \edef\Aux@{\the\Realtoks}%  
      %%convert dimen to real
      \special{pstext="10 10 0 0 10 10 startTexFig\space
           \the\mag\space 1000 div \Aux@\space mul 
           \the\mag\space 1000 div \Aux@\space mul scale"}%
      \includegraphics{##1}%
      \special{pstext=endTexFig}%
        }}

  %% dvips by Tom Rokicki; free driver in portable C 
   % Introduced by W.D. Neumann <neumann@mps.ohio-state.edu>
  \def\SetRokickiEPSFSpecial{\PSOrigintrue 
   \gdef\EPSFSpecial##1##2{%
      \dimen4=##2pt% convert real to dimen
      \divide\dimen4 by 10\relax
      \Real{\dimen4}% dimens 0,2 used here
      \edef\Aux@{\the\Realtoks}%  
      %%convert dimen to real
      \includegraphics{##1}}}

  \def\SetInlineRokickiEPSFSpecial{\PSOrigintrue 
   \gdef\EPSFSpecial##1##2{%
      \dimen4=##2pt% convert real to dimen
      \divide\dimen4 by 1000\relax
      \Real{\dimen4}% dimens 0,2 used here
      \edef\Aux@{\the\Realtoks}%  
      %%convert dimen to real
      \special{ps::[begin] 10 10 0 0 10 10 startTexFig\space
           \the\mag\space 1000 div \Aux@\space mul 
           \the\mag\space 1000 div \Aux@\space mul scale}%
      \special{ps: plotfile ##1}%
      \special{ps::[end] endTexFig}%
        }}

 %%%  OzTeX (versions 1.42 and later), by Andrew Trevorrow
 %%%  (for earlier versions see PSprint below!!)
 %%  complete public domain TeX for Macintosh
 %%  Send 10 UNFORMATTED 800K disks 
 %%  with return postage to
 %%  Peter Abbott, Computing Service, 
 %%  Aston University, Aston Triangle, Birmingham B4 7ET
 %%  Posting: ftp   midway.uchicago.edu
 %%  Nota: Version 1.42 may give
 %%  spurious "offpage" error notices on printing.
 %%  Nota: No support yet for MacPaint files.
 \def\SetOzTeXEPSFSpecial{\PSOrigintrue
 \gdef\EPSFSpecial##1##2{%
 \dimen4=##2pt%% convert real to dimen
 \divide\dimen4 by 1000\relax
 \Real{\dimen4}%% dimens 0,2 used here
 \edef\Aux@{\the\Realtoks}%% convert dimen to real
 \special{epsf=\string"##1\string"\space scale=\Aux@}%
 }} 

 %% PSprint, by AndrewTrevorrow for VaX VMS
 %% and OzTeX versions <= 1.41  
  % tested 2-91 by Max Calviani <ISICA@ASTRPD.infn.it>
  \def\SetPSprintEPSFSpecial{\PSOriginFALSE % artifice; see below
   \gdef\EPSFSpecial##1##2{%note order
     \special{##1\space 
       ##2 1000 div \the\mag\space 1000 div mul
       ##2 1000 div \the\mag\space 1000 div mul scale
       \the\LLXtoks@\space neg \the\LLYtoks@\space neg translate
       }}}

 %% DVILASER/PS driver originally written by David Fuchs
  % marketed and supported by ArborTeXt  535 W. William St.
  % Suite 300, Ann Arbor, MI 48103, U.S.A
  % (313) 996-3566 (313) 996-3573
  % help@arbortext.com, Andrew Dobrowolski
 \def\SetArborEPSFSpecial{\PSOriginfalse % check!
   \gdef\EPSFSpecial##1##2{%
     \edef\specialthis{##2}%
     \SPLIT@0.@\specialthis.@\relax % suppress decimals (nec!)
     \special{ps: epsfile ##1\space \the\Initialtoks@}}}

 %% dvitops, (c) James Clark <jjc@jclark.uucp>
  % public domain; distributed by UK TeX Archive
  % computers: unix, msdos, vms, primos and vm/cms,
  % introduced by S. Ratz <spqr@uk.ac.southampton.ecs>
 \def\SetClarkEPSFSpecial{\PSOriginfalse % please test!
   \gdef\EPSFSpecial##1##2{%
     \Rescale {\Wd@@}{##2pt}{1000pt}%
     \Rescale {\Ht@@}{##2pt}{1000pt}%
     \special{dvitops: import 
           ##1\space\the\Wd@@\space\the\Ht@@}}}

 %% DVIPSONE, for PC compatibles
  % Y&Y, 106 Indian Hill, Carlisle MA 01741, USA
  % (508) 371-3286
  % (introduced by B. Horn <bkph@ai.mit.edu>)
  \let\SetDVIPSONEEPSFSpecial\SetUnixCoopEPSFSpecial
  \let\SetDVIPSoneEPSFSpecial\SetUnixCoopEPSFSpecial

 %% DVIALW by N. Beebe, public domain 
  % DVI Driver Distribution, Center for Scientific Computing,
  % Department of Mathematics, 220 South Physics Building,
  % University of Utah, Salt Lake City, UT 84112, USA
  % (introduced by B. Horn <bkph@ai.mit.edu>)
  % Proposed standard; see TUGboat article 1993.
  \def\SetBeebeEPSFSpecial{%please test!
   \PSOriginfalse% 
   \gdef\EPSFSpecial##1##2{\relax
    \special{language "PS",
      literal "##2 1000 div ##2 1000 div scale",
      position = "bottom left",
      include "##1"}}}
  \let\SetDVIALWEPSFSpecial\SetBeebeEPSFSpecial

 %% Northlake software
  \def\SetNorthlakeEPSFSpecial{\PSOrigintrue
   \gdef\EPSFSpecial##1##2{%
     \edef\specialthis{##2}%
     \SPLIT@0.@\specialthis.@\relax % suppress decimals (nec!)
     \special{insert ##1,magnification=\the\Initialtoks@}}}

 \def\SetStandardEPSFSpecial{%
   \gdef\EPSFSpecial##1##2{%
     \ms@g{}
     \ms@g{%
       !!! Sorry! There is still no standard for \string%
       \special\space EPSF integration !!!}%
     \ms@g{%
      --- So you will have to identify your driver using a command}%
     \ms@g{%
      --- of the form \string\Set...EPSFSpecial, in order to get}%
     \ms@g{%
      --- your graphics to print.  See BoxedEPS.doc.}%
     \ms@g{}
     \gdef\EPSFSpecial####1####2{}
     }}

  \SetStandardEPSFSpecial %% currently gives warning
 
 \let\wlog\wlog@ld %%restore logging 

 \catcode`\:=\C@tColon
 \catcode`\;=\C@tSemicolon
 \catcode`\?=\C@tQmark
 \catcode`\!=\C@tEmark
 \catcode`\"=\C@tDqt

 \catcode`\@=\EPSFCatAt

%%%%%%%%%%%% ASCII Character test
 %
 %       Upper case letters: ABCDEFGHIJKLMNOPQRSTUVWXYZ
 %       Lower case letters: abcdefghijklmnopqrstuvwxyz
 %                                   Digits: 0123456789
 % Square, curly, angle braces, parentheses: [] {} <> ()
 %           Backslash, slash, vertical bar: \ / |
 %                              Punctuation: . ? ! , : ;
 %          Underscore, hyphen, equals sign: _ - =
 %                Quotes--right left double: ' ` "
 %"at", "number" "dollar", "percent", "and": @ # $ % &
 %           "hat", "star", "plus", "tilde": ^ * + ~
 %
 %%%%%%%%%%%%%%%%%%%%%%%%
 %
 % Une seule erreur de transmission peut empoisoner un programme!
 %
 % A single transmission error can poison a whole program.
 %
 %%%%%%%%%%%%%%%%%%%%%%%%

\SetEPSFDirectory{} 
\HideDisplacementBoxes

\SetRokickiEPSFSpecial  %% dvips by Tom Rokicki

\newtheorem{theorem}{Theorem}[section]
\newtheorem{lemma}[theorem]{Lemma}
\newtheorem{corollary}[theorem]{Corollary}

\theoremstyle{definition}

\newtheorem{problem}[theorem]{Problem}

\theoremstyle{remark}
\newtheorem{remark}[theorem]{Remark}
\newtheorem{siegel}{The $n$-dimensional manifold Siegel problem\hspace{-.045in}}
\renewcommand{\thesiegel}{}

\numberwithin{equation}{section}

%%%%%%%%%%%%%%%%%%%%%%%%%%%%%%%%%%%%%%%%%%%%%%%%%%%%%%%%%%%
%
%  John's definitions
%
\newcommand{\integers}{{\mathbb Z}}
\newcommand{\realnos}{{\mathbb R}}
\newcommand{\rationalnos}{{\mathbb Q}}
\newcommand{\complexnos}{{\mathbb C}}
\def\diag{{\rm diag}}
\def\ov{\overline}
\newcommand{\p}{\phantom}
\def\Nu{{\mathrm N}}
\def\Mu{{\mathrm M}}
\def\Tau{{\mathrm T}}
\def\Eta{{\mathrm H}}
\def\Kappa{{\mathrm K}}
\def\Iota{{\mathrm I}}
\def\circle{{\rm O}}
\def\H{\ams{H}}
\def\E{\ams{E}}
\def\C{\complexnos}
\def\R{\realnos}
\def\Z{\integers}
\def\Q{\rationalnos}

\def\CC{\mathscr C}
\def\LL{\mathscr L}

\def\aa{\alpha}

\def\o{\text{O}}
\def\reflec{\text{Reflec}}
\def\po{\text{PO}}
\def\aut{\text{Aut}}
\def\vol{\text{vol}}
\def\covol{\text{covol}}
\def\isom{\text{Isom}\,}
%
%%%%%%%%%%%%%%%%%%%%%%%%%%%%%%%%%%%%%%%%%%%%%%%%%%%%%%%%%%%

%%%%%%%%%%%%%%%%%%%%%%%%%%%%%%%%%%%%%%%%%%%%%%%%%%%%%%%%%%%
%
%  Brent's definitions
%
\newpsobject{showgrid}{psgrid}{subgriddiv=1,griddots=5,gridlabels=6pt}
\font\tencyr=wncyr10 at 10truept
\input cyracc.def
\def\cyr{\tencyr\cyracc}
% or, alternatively:
%\input rus12.tex
%
%%%%%%%%%%%%%%%%%%%%%%%%%%%%%%%%%%%%%%%%%%%%%%%%%%%%%%%%%%%

%%%%%%%%%%%%%%%%%%%%%%%%%%%%%%%%%%%%%%%%%%%%%%%%%%%%%%%%%%%
%
% margin labels
%
\newcommand{\marginlabel}[1]
{\mbox{}\marginpar{\raggedleft\hspace{0pt}#1}}
\newcommand{\sidecomment}[1]{\marginlabel{**\small{{#1}}**}}
%
%%%%%%%%%%%%%%%%%%%%%%%%%%%%%%%%%%%%%%%%%%%%%%%%%%%%%%%%%%%

\begin{document}

%\title{Arithmetic Hyperbolic $6$-Manifolds of Smallest Volume}
%\title{Right-angled Coxeter polytopes and a problem of Siegel}
%\title{Hyperbolic manifolds and a problem of Siegel}
\title[Hyperbolic manifolds and a problem of Siegel]{Right-angled Coxeter polytopes, 
hyperbolic 6-manifolds, and a problem of Siegel}

%    Information for first author
\author{Brent Everitt}
%    Address
\address{Department of Mathematics, University of York, York YO10 5DD, England}
\email{bje1@york.ac.uk}
%    \thanks will become a 1st page footnote.
\thanks{The first author thanks the Department of Mathematics at the University of
Vanderbilt for its hospitality during a visit in 2010}

%    Information for second author
\author{John G. Ratcliffe}
%    Address
\address{Department of Mathematics, Vanderbilt University, Nashville, TN 37240, USA}
\email{j.g.ratcliffe@vanderbilt.edu}
%    \thanks will become a 1st page footnote.
\thanks{The second author thanks the London Mathematical Society for support to speak at the Universities of Durham, Warwick, and York in 2009}

%    Information for third author
\author{Steven T. Tschantz}
%    Address
\address{Department of Mathematics, Vanderbilt University, Nashville, TN 37240, USA}
\email{steven.tschantz@vanderbilt.edu}
%    \thanks will become a 1st page footnote.
\thanks{}

%    General info
\subjclass[2000]{Primary 57N16; Secondary 20F55}

%\date{January 1, 2001 and, in revised form, June 22, 2001.}

%\dedicatory{}

\keywords{Hyperbolic manifold, right-angled polytope, Coxeter group}

\begin{abstract}
By gluing together the sides of eight copies of an all-right angled hyperbolic 6-dimensional polytope,  
two orientable hyperbolic $6$-manifolds
with Euler characteristic $-1$ are constructed. They are the first known
examples of orientable hyperbolic $6$-manifolds having the smallest possible volume. 
\end{abstract}

\maketitle

\section{Introduction} \label{section:1}
In 1931 L\"obell glued together the sides of eight copies of an all-right angled 
hyperbolic polyhedron to create the first example of a compact, orientable, hyperbolic $3$-manifold
\cite{Lobell}.
In the subsequent decades, the universe of hyperbolic manifolds has grown into
an extraordinarily rich one. To get a handle on all this richness, one must resort to
viewing it through the eyes of invariants. For hyperbolic 
manifolds the most important invariant is \emph{volume\/}.

Viewed this way the $n$-dimensional hyperbolic manifolds look like a
well-ordered set, which is discrete, when $n\not= 3$, and we are naturally led to

\begin{siegel}
Determine the minimum possible volume obtained 
by an orientable hyperbolic $n$-manifold. 
\end{siegel}

% This seems to be the problem originally posed by Siegel in his 1945 paper 
% \cite{Siegel36}, although these days the Siegel problem is generally understood to refer
% to the question above for $n$-orbifolds rather than manifolds. Nevertheless, 
% the so-called weak problem is one with a venerable history and so we feel
% confident in resurrecting the name. 
All
hyperbolic manifolds in this paper will be complete Riemannian manifolds of constant 
sectional curvature $-1$. 
The ``full'' Siegel problem refers to the problem above for orbifolds rather than manifolds.
Nevertheless, the manifold Siegel problem is one with a long and venerable history.

This paper describes our solution when $n=6$. But first, an overview and some background.
The Euler characteristic $\chi$ creates a big difference between even and odd dimensions.
When $n$ is even, the Gauss-Bonnet theorem gives
$$
\vol(M)=\kappa_n\chi(M),\text{ with }
\kappa_n=(-2\pi)^{\frac{n}{2}} /(n-1)!!
$$
for the volume of an $n$-dimensional hyperbolic manifold $M$.
As $\chi(M)\in\Z$, the most obvious place to look for solutions to the problem is when
$|\chi|=1$. A compact orientable
$M$ satisfies $|\chi(M)|\in 2\Z$, so the minimum volume is most
likely achieved by a non-compact manifold. 
When $n$ is odd, $\chi(M)=0$, and a different
approach must be found. For these reasons progress in even dimensions has been more rapid. 
In practice the basic problem stated above tends to 
bifurcate into a number of sub-problems by the addition of
more adjectives: the most common being compact, non-compact, and arithmetic.

In dimension $2$, the answer is $2\pi$, achieved by the once-punctured torus or the
thrice-punctured sphere, and $4\pi$ in the compact case--achieved, of course, by the genus two surface.
Dimension $3$ is the only odd one for which the problem is solved: the minimum volume,
realized by the Matveev-Fomenko-Weeks manifold \cite{G-M-M}, is
$$
\frac{3\cdot 23^{\frac{3}{2}}\,\zeta_k(2)}{4\pi^4},
$$
with $\zeta_k$ the Dedekind zeta function of the number field $k=\Q(\theta)$
where $\theta$ satisfies $\theta^3-\theta+1=0$ (see \cite{Chinburg01} for this 
beautiful formula for its volume).
The non-compact solution is over twice as large: $6\text{{\cyr L}}(\pi/3)$, with {\cyr L} the
Lobachevsky function, and is achieved by the figure-eight knot complement \cite{Meyerhoff01}. 

In dimension $4$, the minimum of $4\pi^2/3$ is realized in \cite{Ratcliffe00} by gluing the sides
of an ideal, regular, hyperbolic $24$-cell (see also \cite{Everitt02} for an algebro-combinatorial 
construction). For compact manifolds, the answer lies somewhere in the range 
$8\pi^2/3\leq\vol\leq 64\pi^2/3$, with the former the theoretical $\chi\geq 2$ bound, and the
latter realized in \cite{Conder04,Long} as orientable double covers of non-orientable
manifolds with $\chi=8$. Very little is known about the general picture in dimension $5$. 
%If $M$ is non-compact with $m$ cusps then $\vol(M)/m>0.3922$ \cite{kellerhals98}.
The smallest currently known volume is $7\zeta(3)/4$, where $\zeta$ is the Riemann 
zeta function--see \cite{Ratcliffe04}.

In \cite{E-R-T} we announced the discovery of some hyperbolic $6$-manifolds with minimum 
volume $8\pi^3/15$, although they were non-orientable. 
In this paper we construct, both geometrically
and algebraically, \emph{orientable\/} examples having this minimal volume. 
The manifold Siegel problem thus has answer $8\pi^3/15$ in dimension $6$. 
For compact 6-manifolds nothing is known except for the bound $\vol\geq 16\pi^3/15$ 
given by the Gauss-Bonnet theorem. 

Our construction is a return to the classical methods of L\"obell: the manifolds are
obtained by gluing together the sides of 8 copies of an all right-angled, hyperbolic, 6-dimensional polytope that we call $P^6$. 

Indeed the construction would not have been possible 
without the exquisite, extraordinary, and exceptional polytope $P^6$.
The polytope is exquisite because 
of its high degree of symmetry--its symmetry group is the Weyl group of type 
$E_6$ of order $51,840$;
extraordinary, because $P^6$ is unique up to congruence 
with the property that it has the minimum number of $k$-dimensional faces  
for each $k = 1,\ldots, 5$ among all right-angled, hyperbolic, 6-dimensional polytopes (see
\S\ref{section:3});  
exceptional, because the Euclidean dual
is a $6$-dimensional semi-regular polytope, first discovered by Gosset 
in 1900, which combinatorially parametrizes the celebrated 
arrangement of $27$ straight lines on a general cubic surface. %(See \S\ref{section:2}).
Another amazing property of $P^6$ is that it is a Coxeter polytope 
for the congruence two subgroup of the group $PO_{6,1}\Z$
of integral, positive, Lorentzian $7\times 7$ matrices (see \S\ref{section:2}).
This breathtaking confluence of hyperbolic geometry, 
Coxeter groups, and number theory gave us 
the necessary insight to find the right gluings of the 8 copies of $P^6$ needed to construct the manifolds.

The paper is organized as follows: \S\ref{section:2} introduces two families that turn out to be 
hyperbolic Coxeter groups for small $n$--the groups $PO_{n,1}\Z$ and their congruence two subgroups. 
We also introduce the family of hyperbolic polytopes $P^n$. 
These guys really are the stars of the show, and we continue with some of
their remarkable properties in \S\S\ref{section:3}-\ref{section:4}. 
The geometric construction of the manifolds is described in \S\ref{section:5}. 
We also remind the reader of the non-orientable results obtained in \cite{E-R-T}, 
and in \S\ref{section:6} of the coding conventions used. 
In  \S\S\ref{section:7}-\ref{section:8} we give an alternative construction for those 
who would rather check an algebraic argument by hand than trust a computer calculation. 
Finally, in \S\ref{section:9} we finish with some general remarks and open problems.

\section{Two families of hyperbolic Coxeter polytopes} \label{section:2}

% What we do in this section. Something along the lines of: the isometry group 
% of $n$-dimensional hyperbolic space is $PO_{n,1}\R$. The moral of this section is that 
% for $2\leq n\leq 13$ the arithmetic group $PO_{n,1}\Z$ and for
% $4\leq n\leq 7$ the congruence subgroup $\ker(PO_{n,1}\Z\rightarrow PO_{n,1}\Z/2)$
% are reflection groups. And something else. 

A {\it polytope} $P$ in $n$-dimensional hyperbolic space
$H^n$ is a convex polyhedron, with finitely many actual 
and ideal vertices,
such that $P$ is the convex hull of all its vertices in the projective disk model of $H^n$. 
By \cite[Theorem 6.5.8]{Ratcliffe06}, an $n$-dimensional polyhedron in $H^n$ is a polytope 
if and only if it has finitely many sides (that is, $(n-1)$-dimensional faces) and finite volume. 
Call $P$ a \emph{Coxeter polytope\/} if the 
dihedral angle subtended by two adjacent sides 
is $0$ or $\pi/m$ for some integer $m\geq 2$ (a polytope has a dihedral angle $0$
only if it is $2$-dimensional); $P$ is 
\emph{right-angled\/} if all its dihedral angles are either $0$ or $\pi/2$. 

The group $\Gamma$ generated by reflections in the sides of the Coxeter polytope 
$P$ is a hyperbolic Coxeter group and a discrete subgroup of $\isom H^n\cong PO_{n,1}\R$.
The Coxeter symbol for $P$ (or $\Gamma$) has nodes
indexed by the sides, and an edge labeled $m$ joining the nodes
corresponding to adjacent sides that intersect with angle $\pi/m$. 
The edges joining the nodes corresponding to non-adjacent sides are labeled $\infty$. 
The labels $2$ and $3$ occur often in real world examples, 
so such edges are respectively removed or left unlabeled.

Finding hyperbolic Coxeter groups is not entirely straightforward, especially in higher 
dimensions.
A fruitful source is via the automorphism groups 
of certain integral quadratic forms. For example, 
consider a $(n+1)$-dimensional Lorentzian lattice, 
that is, an $(n+1)$-dimensional 
free $\Z$-module equipped with a $\Z$-valued bilinear form of signature $(n,1)$. 
For each $n$ there is a unique such, denoted
$I_{n,1}$, that is odd and self-dual (see \cite[Theorem V.6]{Serre73}, or \cite{Milnor73, Neumaier83}).
By \cite{Borel62} the group $\text{Aut}(I_{n,1})\cong O_{n,1}\Z$ (hence also $PO_{n,1}\Z$)  acts 
discretely, cofinitely by isometries on $I_{n,1}\otimes\R\approx H^n$. 

The study of the action of $PO_{n,1}\Z$ on $H^n$ originated with Fricke \cite{Fricke} 
who showed that $PO_{2,1}\Z$ is a hyperbolic Coxeter group with symbol in Figure \ref{fig:1}.  
Coxeter and Whitrow \cite{Coxeter50} showed that  $PO_{3,1}\Z$ is 
also a hyperbolic Coxeter group with symbol in Figure \ref{fig:1}.  
In general, we have a split, short, exact sequence
\begin{equation}
  \label{eq:1}
1\rightarrow \Gamma^n\rightarrow PO_{n,1}\Z \rightarrow H\rightarrow 1,  
\end{equation}
where $\Gamma^n$ is the subgroup generated by the reflections in $PO_{n,1}\Z$
and $H$ is the symmetry group of the Coxeter polytope (or symbol) corresponding to $\Gamma^n$. 
Vinberg and Kaplinskaja \cite{Vinberg72,Vinberg78} showed that $H$ is a finite group (and hence
$\Gamma^n$ acts with finite covolume)  if and only if
$n\leq 19$, and they computed the Coxeter symbols for $\Gamma^n$ in these cases
(see \cite[Table 4]{Vinberg78a}).

We now restrict to the cases $2\leq n\leq 8$, where the Coxeter polytope for $\Gamma^n$
is a finite volume non-compact simplex $\Delta^n$ with Coxeter symbol in 
Figure \ref{fig:1}. Each symbol has $n+1$ nodes and $\Delta^n$ has $n$ actual vertices (that
is, vertices in $H^n$) and a single ideal vertex on $\partial H^n$
opposite the side labeled $1$ in the Coxeter symbol.
None of these symbols have any symmetries, so the group $H$ in (\ref{eq:1}) is trivial
and $PO_{n,1}\Z=\Gamma^n$ is itself a reflection group. Indeed, this is true also for $9\leq n\leq 13$,
as can be seen by inspecting the symbols in \cite[Table 4]{Vinberg78a}, but this need not concern 
us here. 

From now on we use the hyperboloid model 
$$H^n=\{x\in \R^{n,1}: \langle x,x \rangle=-1\text{ and }\ x_{n+1}>0\},$$
where $\langle\_\, ,\_\, \rangle$ is the 
Lorentzian inner product on $\realnos^{n,1}$ defined by \cite[Formula 3.1.2]{Ratcliffe06}. 
The vertices of $\Delta^n$ in $\realnos^{n,1}$ are listed in Table \ref{table:1} 
with the $i$th vertex opposite the side labelled $i$ in Figure 1. 

\begin{table}[b]  % 1
$$\begin{array}{llll}
2 & (1,0,1) & (1,1,2)/\sqrt{2} & (0,0,1) \\
3 & (1,0,0,1) &  (1,1,0,2)/\sqrt{2} &  (1,1,1,3)/\sqrt{6} \\ 
          &  (0,0,0,1) &                               &       \\
4 & (1,0,0,0,1) & (1,1,0,0,2)/\sqrt{2} & (1,1,1,0,3)/\sqrt{6} \\
          & (1,1,1,1,3)/\sqrt{5} &  (0,0,0,0,1) &      \\
5  & (1,0,0,0,0,1)  &  (1,1,0,0,0,2)/\sqrt{2} & (1,1,1,0,0,3)/\sqrt{6} \\
          & (1,1,1,1,0,3)/\sqrt{5} &    (1,1,1,1,1,3)/2 &  (0,0,0,0,0,1) \\
6 & (1,0,0,0,0,0,1) &  (1,1,0,0,0,0,2)/\sqrt{2} & (1,1,1,0,0,0,3)/\sqrt{6} \\
          & (1,1,1,1,0,0,3)/\sqrt{5} &  (1,1,1,1,1,0,3)/2 & (1,1,1,1,1,1,3)/\sqrt{3} \\
          & (0,0,0,0,0, 0,1) &           &          \\
7 & (1,0,0,0,0,0,0,1)  & (1,1,0,0,0,0,0,2)/\sqrt{2}  &  (1,1,1,0,0,0,0,3)/\sqrt{6} \\
          & (1,1,1,1,0,0,0,3)/\sqrt{5}  &  (1,1,1,1,1,0,0,3)/2   & (1,1,1,1,1,1,0,3)/\sqrt{3}  \\
          &  (1,1,1,1,1,1,1,3)/\sqrt{2} &   (0,0,0,0,0,0,0,1) &   \\
8 & (1,0,0,0,0,0,0,0,1)  & (1,1,0,0,0,0,0,0,2)/\sqrt{2}  &   (1,1,1,0,0,0,0,0,3)/\sqrt{6} \\
          & (1,1,1,1,0,0,0,0,3)/\sqrt{5} &   (1,1,1,1,1,0,0,0,3)/2   &  (1,1,1,1,1,1,0,0,3)/\sqrt{3}  \\
          & (1,1,1,1,1,1,1,0,3)/\sqrt{2}  &  (1,1,1,1,1,1,1,1,3)  &  (0,0,0,0,0,0,0,0,1)
\end{array}$$
\caption{The vertices of $\Delta^n$ listed so that the $i$th vertex is 
opposite the $i$th side in Figure 1. The first column lists $n$. }
\label{table:1}
\end{table}
\begin{figure}[tb]
\centering
\begin{pspicture}(0,0)(12.5,5)
%\showgrid
\rput(6.25,1.2){\BoxedEPSF{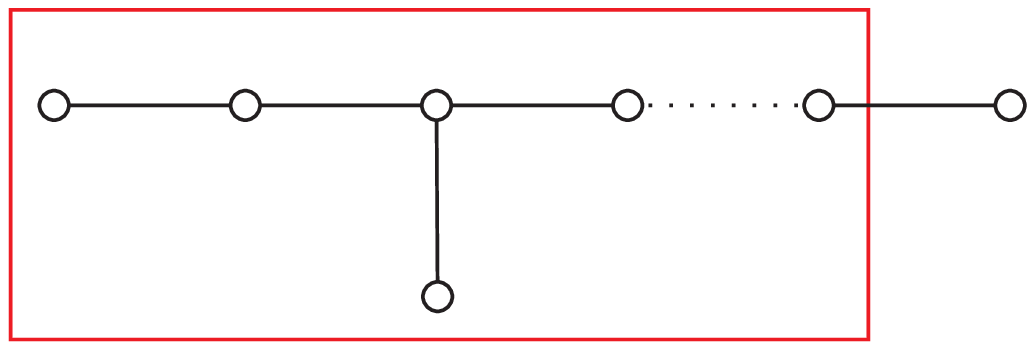 scaled 750}}
\rput(9,3.75){\BoxedEPSF{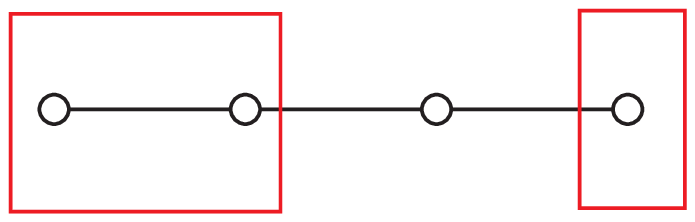 scaled 750}}
\rput(3,3.75){\BoxedEPSF{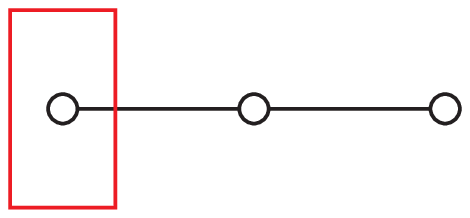 scaled 750}}
\rput(0,-.05){\rput(2.7,2.1){${\scriptstyle 1}$}\rput(4.2,2.1){${\scriptstyle 2}$}
\rput(5.65,2.1){${\scriptstyle 3}$}\rput(7.05,2.1){${\scriptstyle 4}$}
\rput(8.5,2.1){${\scriptstyle n-1}$}\rput(10,2.1){${\scriptstyle n}$}
\rput(5.2,.3){${\scriptstyle n+1}$}}
\rput(-1.05,1.35){\rput(2.7,2.1){${\scriptstyle 1}$}\rput(4.2,2.1){${\scriptstyle 2}$}
\rput(5.65,2.1){${\scriptstyle 3}$}}
\rput(4.05,1.35){\rput(2.7,2.1){${\scriptstyle 1}$}
\rput(4.2,2.1){${\scriptstyle 2}$}\rput(5.65,2.1){${\scriptstyle 3}$}
\rput(7.05,2.1){${\scriptstyle 4}$}}
\rput(3.9,4){$\infty$}\rput(2.4,4){$4$}
\rput(8.95,4){$4$}\rput(10.4,4){$4$}
\rput(9.3,2){$4$}
\rput(3,4.5){$n=2$}\rput(9.5,4.5){$n=3$}\rput(10.5,1){$4\leq n\leq 8$}
\rput(2.4,3.2){${\red \Sigma^2}$}\rput(9.7,3.1){${\red \Sigma^3}$}
\rput(8,.75){${\red \Sigma^n}$}
\psline[linewidth=.2mm,linecolor=red]{->}(9.3,3.1)(8.45,3.1)
\psline[linewidth=.2mm,linecolor=red]{->}(10,3.1)(10.75,3.1)
\end{pspicture}
\caption{Coxeter symbols for the groups $PO_{n,1}\Z$ for $2\leq n\leq 8$.}
\label{fig:1}
\end{figure}

Let $\Sigma^n\subset\Gamma^n$ be the subgroup generated by the reflections
in the sides  indicated in the symbol in Figure \ref{fig:1}. 
The $\Sigma^n$ are isomorphic to Weyl groups of types
$A_1,A_1\times A_2, A_4, D_5, E_6, E_7$, and $E_8$, and hence are finite with orders
$2,12,120,1920,51840,$ $2903040$, and $696729600$, respectively.  
For $3\leq n\leq 8$, there is a single $\Gamma^n$-generating reflection $s$ not in $\Sigma^n$. 
Let $\Sigma_0^n\subset\Sigma^n$
be the subgroup generated by those $\Gamma^n$-generating reflections commuting with $s$. 
Then $\Sigma_0^n$ has symbol with those nodes in the symbol for $\Sigma^n$ not joined to $s$. 
The $\Sigma_0^n$ are also Weyl groups of types $A_1,A_1\times A_2, A_4, D_5, E_6$, and $E_7$.

We represent the isometries of $H^n$ by positive Lorentzian $(n+1)\times (n+1)$ matrices,
{\it positive} here meaning that they preserve the sign of the last or ``time'' coordinate. 
The generators of $\Gamma^n$ are then \emph{integral\/} positive Lorentzian matrices.
Letting $\mathrm{Perm}(i,j)$ be the permutation $(n+1)\times(n+1)$ matrix corresponding to 
the transposition $(i,j)$, then for $2\leq n\leq 8$, the reflection $s_i$ in the side of $\Delta^n$ 
labeled $i$
is represented by $\text{Perm}(i,i+1)$, for $1\leq i\leq n-1$;
the reflection $s_n$ in the side labeled $n$ is given by the diagonal matrix $D =\mathrm{diag}(1,\ldots,1,-1,1)$, 
and $s_{n+1}$ in the $(n+1)$st side by the following $(n+1)\times (n+1)$ matrix:
$$
\begin{pspicture}(0,0)(12.5,3.25)
%\showgrid
\rput(1.5,1.625){
$\left(\begin{array}{rrr}
-1 & -2 & 2 \\
-2 & -1 & 2 \\
-2 & -2 & 3
\end{array}\right)$
}
\rput(5,1.625){
$\left(\begin{array}{rrrr}
0 & -1 & -1& 1 \\
-1 & 0 & -1 & 1\\
-1 & -1 & 0 & 1 \\
-1 & -1 & -1 & 2
\end{array}\right)$
}
\rput(7.2,0){
\rput[lb](0,0){$\begin{array}{rrrrcrr}
0 & -1 & -1 & & & & 1 \\
-1& 0  & -1 & & & & 1 \\
-1&-1  & 0  & & & & 1 \\
& & &1 & & & \\
& & & & \ddots & & \\
& & & & & 1 & \\
-1 & -1 & -1 & & & & 2\\
\end{array}$}
\psline[linewidth=.2mm]{-}(.1,1.95)(4.7,1.95)
\psline[linewidth=.2mm]{-}(2.4,0)(2.4,3.2)
\rput[lb](-.2,-.1){$\left(\begin{array}{c}
\vrule width 0 mm height 32 mm depth 0 pt\\
\end{array}\right.$}
\rput[lb](4.4,-.1){$\left.\begin{array}{c}
\vrule width 0 mm height 32 mm depth 0 pt\\
\end{array}\right).$}
\rput(1.2,1.1){$\text{\Large{0}}$}\rput(3.5,2.6){$\text{\Large{0}}$}
\rput(3.2,.5){$\text{\Large{0}}$}\rput(4.1,1.4){$\text{\Large{0}}$}
}
\rput(1.5,2.6){$n=2:$}\rput(5,2.75){$n=3:$}\rput(6.2,.2){$4\leq n\leq 8:$}
\end{pspicture}
$$

Let $\Gamma_2^n$ be the congruence two subgroup 
$\Gamma^n\cap\ker(PO_{n,1}\Z\rightarrow PO_{n,1}\Z/2)$, 
that is, the group of all matrices in $\Gamma^n$ that are 
congruent to the identity matrix modulo two. By \cite[Lemma 16]{Ratcliffe97}
the index of $\Gamma_2^n$ in $\Gamma^n$ is given by
$$
[\Gamma^n:\Gamma^n_2]=
\frac{{\displaystyle  2\cdot(2^2-1)\cdot 2^3\cdot(2^4-1)\cdots(2^n-\varepsilon_2(n))}}{{\displaystyle 2^{(n-1)}+2^{(n-1)/2}\cos((n-1)\pi/4)}},
$$
where $\varepsilon_2(n) = 0$ if $n$ is odd and $\varepsilon_2(n) = 1$ if $n$ is even.  
In particular, $[\Gamma^n:\Gamma^n_2]=|\Sigma^n|$
when $2\leq n\leq 7$. For $n=8$, we have $|\Sigma^8|=2[\Gamma^8:\Gamma^8_2]$.

The kernel of the map $\Gamma^n\rightarrow\Sigma^n$ that kills the 
$\Gamma^n$-generators not in $\Sigma^n$ and is the
identity on the others is contained in $\Gamma^n_2$ since the generators killed
are represented by matrices in $\Gamma^n_2$. As this kernel and $\Gamma_2^n$ have the same index, they are equal, 
and we have a split, short, exact sequence
\begin{equation}
  \label{eq:2}
1\rightarrow \Gamma_2^n\rightarrow \Gamma^n\rightarrow \Sigma^n\rightarrow 1,  
\end{equation}
for $2\leq n\leq 7$ (for $n=8$ the kernel is a subgroup of index $2$ in $\Gamma_2^n$ 
which we will describe below).

Remarkably, it turns out that when $2\leq n\leq 7$ the congruence subgroup $\Gamma_2^n$ is also
a reflection group corresponding to a very nice Coxeter polytope.
This polytope is essentially the smallest right-angled Coxeter polytope  that can be obtained
by gluing together copies of the the simplex $\Delta^n$. 

\begin{figure}%[tb]
\centering
\begin{pspicture}(0,0)(12.5,6)
%\showgrid
\rput(6.25,2.25){\BoxedEPSF{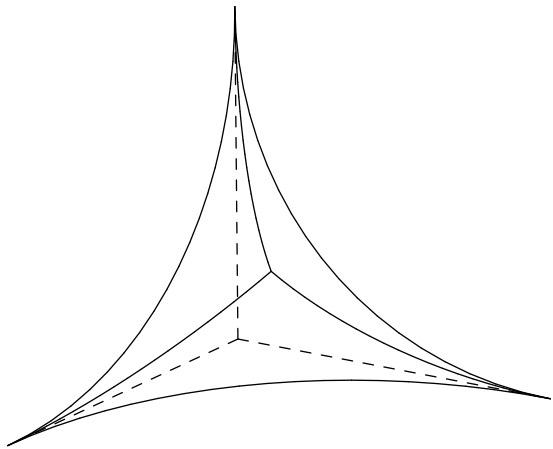 scaled 1000}}
\end{pspicture}
\caption{The right-angled Coxeter polytope $P^3$ in the conformal ball model of $H^3$.}
\label{fig:2}
\end{figure}

Specifically, for $2\leq n \leq 8$, let
$$
P^n=\Sigma^n\Delta^n:=\bigcup_{\gamma\in\Sigma^n} \gamma(\Delta^n),
$$
a polytope obtained by gluing $|\Sigma^n|$ copies of the simplex $\Delta^n$ together. 
Thus $P^n$ has finite volume and like $\Delta^n$ is non-compact, 
with a mixture of actual vertices in $H^n$ and ideal vertices on $\partial H^n$. 

The triangle $\Delta^2$ has angles $0, \pi/4, \pi/2$. 
The polygon $P^2$ is the union of just two copies of $\Delta^2$ obtained by
reflecting across the finite side of $\Delta^2$ labeled $1$, and so $\Sigma^2$ 
is the group of symmetries of $P^2$. 
The polygon $P^2$ is a right triangle with two ideal vertices. 

Let $v$ be the vertex of $\Delta^n$ opposite the side labeled $n$ when $3\leq n\leq 8$. 
Then $\Sigma^n$ is the stabilizer in $\Gamma^n$ of $v$
and $P^n$ has sides the $\Sigma^n$-images of the side of $\Delta^n$ opposite $v$. 
The group of symmetries of $P^n$ is thus $\Sigma^n$, with $\Delta^n$ a fundamental polytope 
for the action of $\Sigma^n$ on $P^n$. 

For $3\leq n\leq 8$,  the polytope $P^n$ is an all right-angled Coxeter polytope 
with $|\Sigma^n|/|\Sigma_0^n|$ sides each congruent to $P^{n-1}$. 
This follows as the side of $\Delta^n$ opposite $v$ is congruent to $\Delta^{n-1}$ 
and intersects the other sides of $\Delta^n$ in dihedral angles $\pi/2$ or $\pi/4$. 
Figure \ref{fig:2} shows $P^3$. 
The polytope $P^n$ has $3, 6, 10, 16, 27, 56$, and $240$ sides for $n = 2,\ldots, 8$, respectively. 

By (\ref{eq:2}), $\Sigma^n$ is a set of coset representatives for 
$\Gamma^n_2$ in $\Gamma^n$, and so $P^n$ is a fundamental region for the action
of $\Gamma^n_2$ on $H^n$ for $2\leq n\leq 7$. 
Moreover, the reflections in the sides of $P^n$ are just the 
$\Sigma^n$-conjugates of the $\Gamma^n$-generators not in $\Sigma^n$. 
Since these generators lie in $\Gamma_2^n$,
the reflections in the sides of $P^n$ lie in (the normal subgroup) $\Gamma^n_2$ also. Thus the reflection
group generated by reflections in the sides of $P^n$ is a subgroup of $\Gamma^n_2$ 
but having the same fundamental region as $\Gamma^n_2$. The conclusion is that the reflection
group and $\Gamma^n_2$ are equal, and we have proved:

\begin{theorem}[see also \cite{Ratcliffe00,Ratcliffe04}]\label{thm:1} % 2.1
For $2\leq n\leq 7$, the congruence two subgroup 
$\Gamma_2^n$ of the group 
$\Gamma^n=PO_{n,1}\Z$ of integral, positive, 
Lorentzian $(n+1)\times(n+1)$ matrices is a hyperbolic Coxeter group with 
Coxeter polytope the right-angled polytope $P^n$. 
\end{theorem}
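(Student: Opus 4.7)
The plan is to match the reflection group generated by the sides of $P^n$ with $\Gamma_2^n$ via an equality of fundamental polytopes. Everything needed has essentially been set up in the preceding discussion: the split short exact sequence $1\rightarrow\Gamma_2^n\rightarrow\Gamma^n\rightarrow\Sigma^n\rightarrow 1$ of (\ref{eq:2}), the decomposition $P^n=\bigcup_{\gamma\in\Sigma^n}\gamma(\Delta^n)$, the right-angled Coxeter structure of $P^n$, and the explicit matrices for the reflections $s_i$ that generate $\Gamma^n$.

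First I would deduce from (\ref{eq:2}) that, in the range $2\le n\le 7$, the subgroup $\Sigma^n$ is a complete set of coset representatives for $\Gamma_2^n$ in $\Gamma^n$. Since $\Delta^n$ is a fundamental polytope for $\Gamma^n$ acting on $H^n$, taking the union of $\Sigma^n$-translates of $\Delta^n$ then yields $P^n$ as a fundamental polytope for $\Gamma_2^n$. Next I would identify the sides of $P^n$ and the reflections across them: by construction these reflections are precisely the $\Sigma^n$-conjugates of the two $\Gamma^n$-generators that lie outside $\Sigma^n$, namely $s_n=\diag(1,\ldots,1,-1,1)$ and the matrix $s_{n+1}$ displayed above. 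A direct inspection shows each of these two matrices is congruent to the identity modulo $2$, hence so are all their $\Sigma^n$-conjugates; thus the reflection group $R$ generated by the reflections in the sides of $P^n$ is a subgroup of $\Gamma_2^n$.

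To close the loop I would invoke Poincar\'e's polyhedron theorem: since $P^n$ is an all right-angled Coxeter polytope, it is automatically a fundamental polytope for $R$. Now $R\le\Gamma_2^n$ and both groups have $P^n$ as fundamental polytope, which forces $R=\Gamma_2^n$. The hyperbolic Coxeter group structure of $\Gamma_2^n$ is then read off the pairwise dihedral-angle data of the sides of $P^n$, all of which are $\pi/2$.

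The main obstacle is conceptual rather than technical: one really does need the congruence-index identity $[\Gamma^n:\Gamma_2^n]=|\Sigma^n|$ to conclude that $\Sigma^n$ is a full transversal, and this is exactly where the restriction $n\le 7$ enters (it breaks at $n=8$, where $\Sigma^n$ has index $2$ inside a transversal, so the kernel-equals-$\Gamma_2^n$ step fails and one would need to descend to an index-$2$ subgroup). Granted this identity, the rest is a standard fundamental-domain matching argument, and verifying $s_n,s_{n+1}\in\Gamma_2^n$ is a one-line check from the explicit matrices.
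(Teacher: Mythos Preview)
Your argument is essentially the paper's own: use the transversal $\Sigma^n$ from (\ref{eq:2}) to identify $P^n$ as a fundamental polytope for $\Gamma_2^n$, check that the side-reflections of $P^n$ lie in $\Gamma_2^n$ because they are $\Sigma^n$-conjugates of generators already in $\Gamma_2^n$, and conclude by equality of fundamental domains.

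One slip to correct: for $3\le n\le 7$ there is exactly \emph{one} $\Gamma^n$-generator outside $\Sigma^n$, namely $s_n=\diag(1,\dots,1,-1,1)$; the generator $s_{n+1}$ lies in $\Sigma^n$ (it fixes the vertex $v$ opposite side $n$), and in fact $s_{n+1}\not\equiv I\pmod 2$ for $n\ge 3$, contrary to what you wrote. Only for $n=2$ are there two generators outside $\Sigma^2$ (namely $s_2$ and $s_3$, both of which are indeed $\equiv I\pmod 2$). This does not affect the logic of your proof---the side-reflections of $P^n$ are $\Sigma^n$-conjugates of $s_n$ alone when $n\ge 3$, and $s_n\equiv I\pmod 2$ is all you need---but the statement about $s_{n+1}$ should be removed.
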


The group $\Gamma^8_2$ is almost a Coxeter group in the sense that
the hyperbolic Coxeter group $\Gamma(P^8)$ generated by the reflections in the sides of $P^8$ 
is a subgroup of $\Gamma^8_2$ of index 2.   Moreover, we have split, short, exact sequences 
$$1\rightarrow \Gamma(P^8) \rightarrow \Gamma^8\rightarrow \Sigma^8\rightarrow 1, $$ 
$$1\rightarrow \Gamma(P^8) \rightarrow \Gamma^8_2\rightarrow \Gamma^8_2\cap \Sigma^8\rightarrow 1.$$ 
The group $\Gamma^8_2\cap \Sigma^8$ is generated by the longest element of $\Sigma^8$ and
represented by the Lorentzian $9\times 9$ matrix 
$$
\left(\begin{array}{rrrrrrrrr}
-3 & -2  & -2  & -2 & -2  & -2 & -2 & -2 & 6 \\
-2 & -3  & -2  & -2 & -2  & -2 & -2 & -2 & 6 \\
-2 & -2  & -3  & -2 & -2  & -2 & -2 & -2 & 6 \\     
-2 & -2  & -2  & -3 & -2  & -2 & -2 & -2 & 6 \\  
-2 & -2  & -2  & -2 & -3  & -2 & -2 & -2 & 6 \\  
-2 & -2  & -2  & -2 & -2  & -3 & -2 & -2 & 6 \\
-2 & -2  & -2  & -2 & -2  & -2 & -3 & -2 & 6 \\ 
-2 & -2  & -2  & -2 & -2  & -2 & -2 & -3 & 6 \\       
-6 & -6  & -6  & -6 & -6  & -6 & -6 & -6 & 17 \\
\end{array}\right).
$$

Theorem \ref{thm:1} says that $\Gamma^n_2$ is a right-angled Coxeter group for $2\leq n \leq 7$. 
An interesting Coxeter group ought to have an interesting Coxeter 
symbol--and so it does, at least for $4\leq n\leq 7$. 
The Euclidean duals of the polytopes $P^n$ for $4 \leq n \leq 8$ are semi-regular polytopes, 
which were first discovered by T. Gosset \cite{Gosset} 
and called ``pure Archimedean'' by Coxeter in his first paper \cite{Coxeter28}. 
In modern notation, these semi-regular polytopes are denoted $k_{21}$ for $k= 0,\ldots, 4$. 
In particular, the Coxeter symbol for $\Gamma^n_2$ can be obtained from the $1$-skeleton
of the corresponding Gosset polytope: take the complement of the edges 
in the $1$-skeleton by removing them 
if they exist and inserting them if they don't, and label all the edges that result by $\infty$.
See Figure \ref{fig:3} for the $4$-dimensional case (where we've left off the $\infty$ labels for clarity).

\begin{figure}%[tb]
\centering
\begin{pspicture}(0,0)(12.5,5)
%\showgrid
\rput(3,2.5){\BoxedEPSF{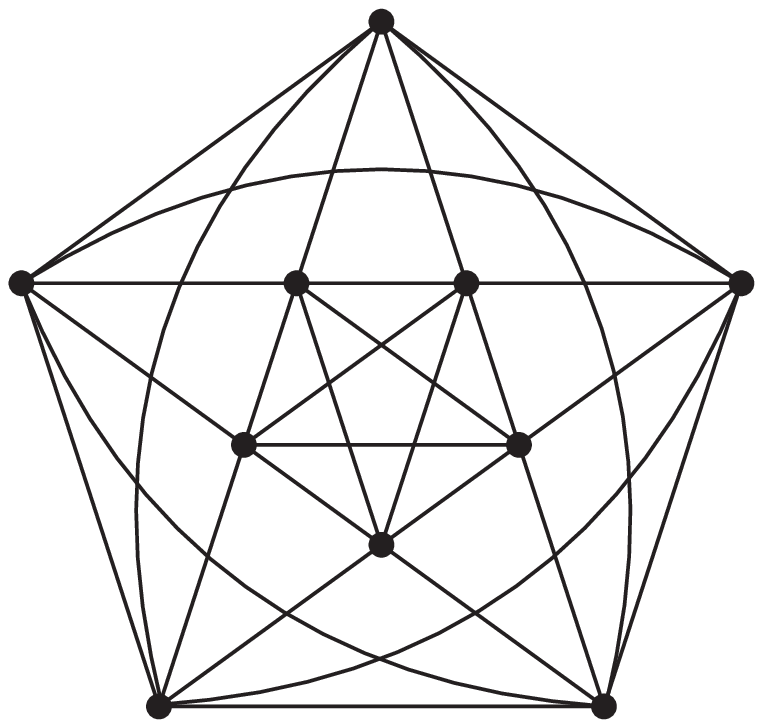 scaled 650}}
\rput(9,2.5){\BoxedEPSF{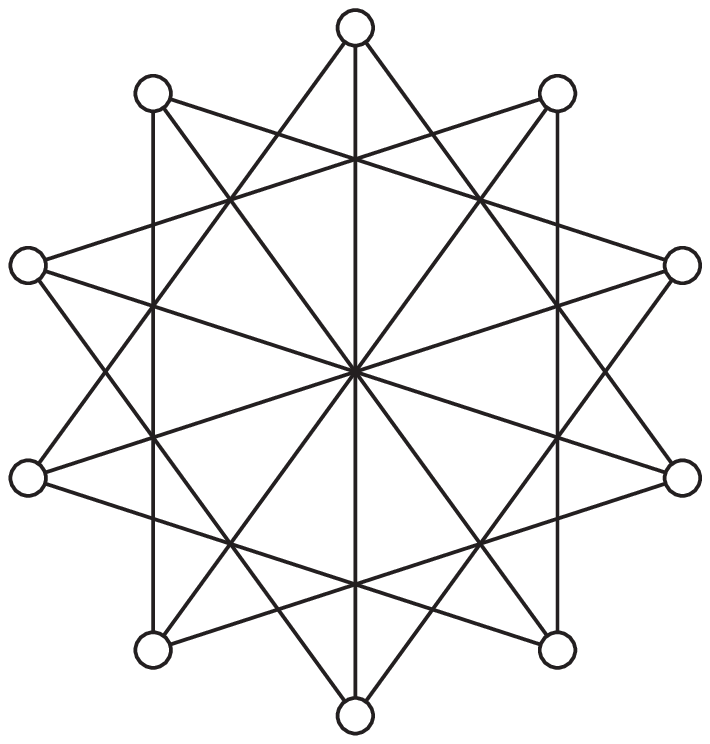 scaled 650}}
\end{pspicture}
\caption{$1$-skeleton of the $4$-dimensional Gosset polytope $0_2$ (left)
and Coxeter symbol for the congruence subgroup $\Gamma^4_2$ (right). The 
$1$-skeleton is combinatorially but not geometrically accurate and the
$\infty$ labels have been left off the Coxeter symbol for clarity.}
\label{fig:3}
\end{figure}

The torsion in a Coxeter group is well understood (see for instance 
\cite{Everitt02,E-H} and the references there), 
so as a corollary we will describe 
the torsion of $\Gamma_2^n$ for $2\leq n \leq 7$.  
We will need a precise description of the torsion of $\Gamma^6_2$ in \S 7. 

Let $\Kappa^n$ be the group of $(n+1)\times (n+1)$ matrices of the form 
$\diag(\pm 1,\ldots, \pm 1, 1)$.  Then $\Kappa^n$ is an elementary 2-group of order $2^n$, 
and $\Kappa^n$ is the stabilizer in $\Gamma_2^n$ of the actual vertex $e_{n+1}$ of $P^n$ for $2 \leq n \leq 7$. 
Let $\ell_n$ be the line in $H^n$ whose ideal endpoints are represented by 
the light-like vectors $e_1+e_{n+1}$ and $e_2+e_{n+1}$ in $\realnos^{n,1}$ for $2\leq n \leq 7$. 
Then $\ell_n$ is a line edge of $P^n$, and $\ell_n$ is contained in $n-1$ sides of $P^n$ for $2\leq n \leq 7$. 
Let $\Lambda^n$ be the group generated by the reflections in the sides of $P^n$ that contain $\ell_n$ 
for $2\leq n \leq 7$. Then $\Lambda^n$ is an elementary 2-group of order $2^{n-1}$, 
and $\Lambda^n$ is the pointwise stabilizer of $\ell_n$ for $2\leq n \leq 7$. 

Let $a_n$ be the number of actual vertices of $P^n$ for $n=2,\ldots, 7$. 
Then $a_n = 1, 2, 5,16, 72, 576$, respectively. 
Let $l_n$ be the number of line edges of $P^n$ for $n=2,\ldots, 7$. 
Then $l_n = 1, 3,10, 40, 216, 2016$, respectively.

\begin{corollary}\label{corollary:1}
For $2\leq n \leq 7$, every finite subgroup of $\Gamma_2^n$ is conjugate in $\Gamma^n$ to a subgroup of either the elementary 2-group $\Kappa^n$ or the elementary 2-group $\Lambda^n$, 
and there are $a_n$ conjugacy classes of maximal finite subgroups 
of $\Gamma_2^n$ of order $2^n$, represented by the stabilizers in $\Gamma_2^n$ of the actual vertices of $P^n$, 
and there are $l_n$ conjugacy classes of maximal finite subgroups of $\Gamma_2^n$ of order $2^{n-1}$, 
represented by the pointwise stabilizers in $\Gamma_2^n$ of the line edges of $P^n$. 
\end{corollary}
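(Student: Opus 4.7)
The strategy is to invoke the classical fact that every finite subgroup of a geometric Coxeter group is conjugate to the stabilizer of a point in the closed Coxeter polytope, together with the explicit right-angled structure of $P^n$. Since $\Gamma_2^n$ acts on $H^n$ with fundamental polytope $P^n$ (Theorem \ref{thm:1}), any finite $G\subseteq\Gamma_2^n$ fixes a point $p\in H^n$ by $\mathrm{CAT}(0)$ geometry, and after conjugating by an element of $\Gamma_2^n$ we can assume $p\in P^n$. Let $F$ be the unique face of $P^n$ whose relative interior contains $p$; then the stabilizer of $p$ in $\Gamma_2^n$ is the group generated by the reflections in the sides of $P^n$ that contain $F$. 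Because $P^n$ is right-angled, these reflections commute pairwise, so the stabilizer is an elementary abelian $2$-group of rank $\mathrm{codim}\,F$.

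I next classify which faces $F$ give a finite stabilizer: exactly those whose relative interior meets $H^n$ (so that $p$ is not at infinity). The minimal such $F$ are either (a) an actual vertex of $P^n$, of codimension $n$, with stabilizer of order $2^n$, or (b) a line edge of $P^n$, of codimension $n-1$, with stabilizer of order $2^{n-1}$: a smaller face containing a line edge would be an ideal vertex, whose stabilizer is infinite. By direct inspection, $\Kappa^n$ is the stabilizer in $\Gamma_2^n$ of the actual vertex $e_{n+1}$ and $\Lambda^n$ is the pointwise stabilizer of $\ell_n$. To upgrade to the first claim I use the symmetry group $\Sigma^n\subseteq\Gamma^n$ of $P^n$: I will show $\Sigma^n$ acts transitively on the actual vertices of $P^n$ and transitively on its line edges. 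The key combinatorial input is that among the actual vertices $v_2,\ldots,v_{n-1},e_{n+1}$ of $\Delta^n$ only $e_{n+1}$ survives as an actual vertex of $P^n$ --- the others lie on sides of $\Delta^n$ that are not in $\Sigma^n\cdot(\mathrm{side}\,n)$ and are therefore absorbed into the relative interior of a higher-dimensional face of $P^n$. An orbit-stabilizer computation then gives $|\Sigma^n\cdot e_{n+1}|=a_n$, matching the count, and the parallel argument for $\ell_n$ gives $l_n$ line edges.

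Finally, for the conjugacy-class count in $\Gamma_2^n$, I use the strict fundamental-domain property of $P^n$: points in distinct faces lie in distinct $\Gamma_2^n$-orbits. Hence the $a_n$ actual vertices of $P^n$ yield $a_n$ conjugacy classes of order-$2^n$ stabilizers, and the $l_n$ line edges yield $l_n$ classes of order $2^{n-1}$. The main obstacle is the transitivity claim for $\Sigma^n$: it requires tracking which sides of $\Delta^n$ lie in $\Sigma^n\cdot(\mathrm{side}\,n)$ and then verifying that each non-surviving actual vertex of $\Delta^n$ is genuinely absorbed into a positive-dimensional face of $P^n$ rather than becoming a vertex of a different type. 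This hinges on the incidence geometry of $\Delta^n$ and on the fact that $\Sigma^n$ stabilizes the vertex $v=v_n$, which therefore lies in the interior of $P^n$ and drops out of its vertex set entirely; once $v_n$ is removed, the $\Sigma^n$-action rearranges the remaining face incidences in the required uniform way.
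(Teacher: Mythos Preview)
Your overall strategy matches the paper's: reduce to face-stabilizers via the fixed-point theorem, identify the maximal finite stabilizers with those of actual vertices and line edges, and use $\Sigma^n$-transitivity to conjugate (in $\Gamma^n$) every such stabilizer to $\Kappa^n$ or $\Lambda^n$. The paper simply asserts the transitivity of $\Sigma^n$ on actual vertices and on line edges, so your attempt to sketch this point is if anything more detailed than the published argument.

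The real difference is in the conjugacy-class count. Your step from ``points in distinct faces lie in distinct $\Gamma_2^n$-orbits'' to ``distinct faces give non-conjugate stabilizers'' is not immediate: in general, different points can have conjugate (or even equal) stabilizers. The missing observation is that an order-$2^n$ vertex stabilizer, being generated by $n$ pairwise-commuting reflections whose mirrors meet in that vertex, has that vertex as its \emph{unique} fixed point in $H^n$; likewise an order-$2^{n-1}$ line-edge stabilizer has a unique fixed geodesic, which is the entire line edge (both endpoints being ideal). Conjugacy of stabilizers then forces the corresponding vertices or line edges into the same $\Gamma_2^n$-orbit, contradicting the strict fundamental-domain property of $P^n$. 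Once you supply this, your argument is complete.

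The paper takes a shorter, purely algebraic route for this step: in a right-angled Coxeter system $(W,S)$ with $S$ finite, the generators $S$ project to a basis of the elementary $2$-group $W^{ab}$, so two subgroups $\langle T\rangle$, $\langle T'\rangle$ with $T,T'\subseteq S$ are conjugate in $W$ only if $T=T'$. Since distinct actual vertices (respectively line edges) of $P^n$ lie on distinct $n$-element (respectively $(n-1)$-element) subsets of sides, their stabilizers are pairwise non-conjugate in $\Gamma_2^n$. This avoids any appeal to fixed-point sets and works uniformly.
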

\begin{proof}
By \cite[Exercise 7.1.7]{Ratcliffe06}, every finite subgroup of $\Gamma_2^n$ 
is conjugate in $\Gamma_2^n$ 
to a subgroup of either a stabilizer of an actual vertex of $P^n$ or a pointwise 
stabilizer of a line edge of $P^n$. 
The group $\Gamma^n$ has the finite subgroup $\Sigma^n$,  
which is a set of coset representatives for $\Gamma_2^n$ in $\Gamma^n$. 
The group $\Sigma^n$ acts on $P^n$ as its group of symmetries. 
The group  $\Sigma^n$ acts transitively on the set of actual vertices of $P^n$ 
and on the set of line edges of $P^n$. 
Hence, every finite subgroup of $\Gamma_2^n$ is conjugate in $\Gamma^n$ 
to a subgroup of either the elementary 2-group $\Kappa^n$ 
or the elementary 2-group $\Lambda^n$ for $2\leq n \leq 7$. 

In a right-angled Coxeter system $(W,S)$ with $S$ finite, two subgroups that are generated 
by subsets of $S$ are conjugate if and only if they are equal, since $S$ projects to a basis 
of the elementary 2-group $W^{ab}$. 
Hence there are $a_n$ conjugacy classes of maximal finite subgroups 
of $\Gamma_2^n$ of order $2^n$, corresponding to the $a_n$ actual vertices of $P^n$, and there 
are $l_n$ conjugacy classes of maximal finite subgroups of $\Gamma_2^n$ of order $2^{n-1}$, corresponding to the $l_n$ line edges of $P^n$ for $2\leq n \leq 7$. 
\end{proof}

A general (but less precise) description of the torsion of $\Gamma^n_2$ is 
given in \cite[Theorem 3]{Ratcliffe99}.  In particular, every finite subgroup of $\Gamma^8_2$ 
is an elementary 2-group, and the maximum order of a finite subgroup of $\Gamma^8_2$ is $2^8$. 

\begin{remark} Corollaries 1, 2, 3 in \cite{Ratcliffe00} and 
Corollary 1 in \cite{Ratcliffe04}, corresponding to the cases $n = 2,\ldots, 5$ of Corollary 2.2,  are incorrect. 
The mistake is that we left out the pointwise stabilizers  of the line edges of $P^n$. 
This mistake does not affect any of the other results in  \cite{Ratcliffe00} and \cite{Ratcliffe04}. 
\end{remark}

The hyperbolic volume of $P^n$, for $2\leq n\leq 8$, is equal to 
$|\Sigma^n|\covol(PO_{n,1}\Z)$, where $\covol(\Gamma)$ is the volume of a fundamental region
for the discontinuous action of $\Gamma$ on $H^n$. 
By \cite{Ratcliffe97,Siegel36}, 
\begin{equation}
  \label{eq:3}
\covol(\po_{n,1}\Z)=\frac{(2^{\frac{n}{2}}\pm 1)\pi^{\frac{n}{2}}}{n!}
\prod_{k=1}^{\frac{n}{2}} |B_{2k}|,
\end{equation}
when $n$ is even, with $B_{2k}$ the $2k$-th Bernoulli number, and the plus sign if 
$n=2,8$ and the minus sign when $n=4, 6$. 
The volume of $P^n$, for $n = 2, 4, 6, 8$, is thus $\pi/2, \pi^2/12, \pi^3/15, 136\pi^4/105$, respectively.  
By Theorem \ref{thm:1} and  \cite[Theorem 17]{Ratcliffe97}, the volume of $P^n$, for $n = 3, 5, 7$, is 
$L(2), 7\zeta(3)/8, 8L(4)$, respectively, where $L(s)$ is the Dirichlet $L$-function
$$L(s) = 1-\frac{1}{3^s}+\frac{1}{5^s}-\frac{1}{7^s}+\cdots $$
and $\zeta(s)$ is the Riemann zeta function. 

By Theorem \ref{thm:1} and the Gauss-Bonnet Theorem, for $n = 2, 4, 6$, we have  
$$\mathrm{vol}(P^n) = \kappa_n\chi(\Gamma^n_2).$$
Hence the Euler characteristic of $\Gamma^n_2$, for $n = 2, 4, 6$, is $-1/4, 1/16, -1/8$, respectively. 
A general formula for $\chi(\Gamma^n_2)$ is given in \cite[Theorem 23]{Ratcliffe97}, 
in particular, $\chi(\Gamma_2^8) = 17/4$.

\section{A Characterization of the Polytopes $P^n$}\label{section:3}

In this section we demonstrate a number of extremal combinatorial properties of the 
$P^n$ polytopes for $2\leq n \leq 6$. 
One spin-off is that manifolds obtained by gluing copies of $P^n$
together are likely candidates to have minimal volume. 
If $P$ is a polytope, let $a_k(P)$ be the number of $k$-dimensional faces of $P$. 

\begin{theorem}\label{thm:2}
For each $n = 2,\ldots, 6$, the polytope $P^n$ has the least number of $k$-faces, for each $k = 1,\ldots, n-1$, 
among all right-angled $n$-dimensional polytopes in $H^n$. 
Moreover for each $n = 3,\ldots, 6$, the polytope $P^n$ is unique up to congruence with this property. 
\end{theorem}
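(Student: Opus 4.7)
The plan is to induct on $n$, exploiting the fact that every facet of a right-angled hyperbolic $n$-polytope is itself a right-angled hyperbolic $(n-1)$-polytope. Before the induction, I would establish the following rigid local structure at vertices of any right-angled polytope $P \subset H^n$: at an actual vertex exactly $n$ facets meet, since the vertex figure is a right-angled spherical $(n-1)$-polytope which (via the classification of finite right-angled Coxeter groups as $(\Z/2)^m$) must be an orthoscheme simplex; at an ideal vertex exactly $2(n-1)$ facets meet, since the vertex figure is a compact right-angled Euclidean $(n-1)$-polytope which must be an $(n-1)$-cube. These determine the flag incidence counts that feed into any double counting argument.

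For the base cases I would handle $n=2$ by Gauss--Bonnet (a finite area right-angled polygon needs at least three sides, and $P^2$ achieves this) and $n=3$ by Andreev's theorem combined with Euler's formula to force $a_2(P) \geq 6$, with equality only for $P^3$. For the inductive step with $4 \leq n \leq 6$, the key is a double counting identity of the form $\sum_F a_k(F) = c_{n,k}\, a_k(P)$, where $F$ runs over the facets of $P$ and $c_{n,k}$ is the uniform number of facets containing a given $k$-face (determined by the vertex figure analysis applied inside each facet). Using $a_k(F) \geq a_k(P^{n-1})$ from the inductive hypothesis, one obtains a lower bound $a_k(P) \geq (\text{facet count of } P) \cdot a_k(P^{n-1}) / c_{n,k}$. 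Combined with a Nikulin-type inequality controlling the facet count of $P$ itself (the same inequality that forces at least $10, 16, 27$ facets in dimensions $4, 5, 6$ respectively), this matches the face vector of $P^n$.

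For uniqueness when $3 \leq n \leq 6$, if $P$ realizes the minimum values of $a_k$ simultaneously for each $k$, then equality throughout the inductive bound forces every facet of $P$ to be congruent to $P^{n-1}$ by the inductive uniqueness statement. The adjacency pattern among facets is then determined, because two adjacent copies of $P^{n-1}$ share a common $(n-2)$-face, and the right-angled hypothesis plus the forced combinatorics produce a unique Coxeter symbol coinciding with that of $P^n$. Vinberg's theorem on the existence and uniqueness of a hyperbolic Coxeter polytope realizing a given Coxeter symbol then yields $P \cong P^n$ up to congruence.

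The hardest step will be the inductive double counting: naive bounds are not tight enough to hit the exact face vectors of $P^n$, so sharp input from Nikulin's inequalities for right-angled hyperbolic polytopes, together with an equality-case analysis of those inequalities, will be needed to extract both the minimality and the rigidity that feeds the uniqueness argument. A secondary difficulty is ensuring that combinatorially extremal candidates are actually geometrically realizable in $H^n$, which one sidesteps by matching everything up to the known polytope $P^n$ via Vinberg's uniqueness theorem rather than building $P$ from scratch.
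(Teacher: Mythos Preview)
Your minimality argument is essentially the paper's: induct on $n$, use that each $k$-face of a right-angled polytope lies in exactly $n-k$ facets, so
\[
a_k(P) = \frac{1}{n-k}\sum_{i} a_k(S_i) \geq \frac{a_{n-1}(P^n)\,a_k(P^{n-1})}{n-k} = a_k(P^n),
\]
with the facet-count lower bound $a_{n-1}(P)\geq a_{n-1}(P^n)$ supplied externally (the paper cites Potyagailo--Vinberg for $n=3,4,5$ and Dufour for $n=6$; these are the ``Nikulin-type'' bounds you allude to). So that half matches.

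For uniqueness, however, your route has a gap and the paper's is both simpler and complete. Once equality in the main inequality forces every facet of $P$ to be congruent to $P^{n-1}$, the paper does \emph{not} attempt to pin down the abstract adjacency pattern. Instead it positions $P$ so that one facet coincides with a chosen facet of $P^n$ on the same side of the spanning hyperplane; the right-angle condition then forces each neighboring facet of $P$ to lie in the same hyperplane as the corresponding facet of $P^n$, and continuing across all facets gives $P=P^n$. (The base case $n=3$ is handled directly by an Euler-characteristic and vertex-degree count, not Andreev.) Your proposal instead asserts that ``the adjacency pattern among facets is then determined'' and appeals to a Vinberg uniqueness theorem. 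Two problems: first, you give no argument for why the adjacency graph is forced --- it is not obvious a priori that there is only one combinatorial way to assemble the correct number of copies of $P^{n-1}$ into a right-angled $n$-polytope with the prescribed $a_k$; this is exactly the content you need to supply. Second, there is no off-the-shelf theorem recovering a finite-volume hyperbolic polytope from its abstract right-angled Coxeter symbol: pairs of disjoint facets carry an $\infty$ label, and the symbol does not record their mutual position, so uniqueness does not follow from the diagram alone. The paper's developing argument bypasses both issues.
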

\begin{proof}
We prove the first part by induction on $n$. 
The polygon $P^2$ has the least number of sides among all right-angled polygons of finite area in $H^2$, 
since $P^2$ is a triangle. 
Assume $n > 2$, and $P^{n-1}$  has the least number of $k$-faces, for each $k = 1,\ldots, n-2$, 
among all right-angled $(n-1)$-dimensional polytopes in $H^{n-1}$. 
The polytope $P^n$ has the least number of sides among all right-angled $n$-dimensional polytopes in $H^n$,  
for $n = 3,4$, and $5$ by the inequalities (5), (7), (11)--miss marked as (7)--in Potyagailo-Vinberg \cite{P-V}, and for $n = 6$, 
by Proposition 3 of Dufour \cite{Dufour}. 

Let $P$ be a right-angled $n$-dimensional polytope in $H^n$ 
with sides $S_1,\ldots, S_m$.  
Then each $S_i$ is a right-angled $(n-1)$-dimensional polytope, and $m\geq a_{n-1}(P^n)$. 
Suppose $k$ is in the range $1 \leq k \leq n-2$.  
Each $k$-face of $P$ is a face of exactly $n-k$ sides of $P$, 
since $P$ is right-angled.  Hence we have the main inequality
$$a_k(P) = \frac{1}{n-k}\sum_{i=1}^{m} a_k(S_i) \geq \frac{a_{n-1}(P^n) a_k(P^{n-1})}{n-k} = a_k(P^n).$$
Thus $P^n$ has the least number of $k$-faces, for each $k = 1,\ldots, n-1$, 
among all right-angled $n$-dimensional polytopes in $H^n$. 

Suppose now that $P$ is a right-angled 3-dimensional polytope such that $P$ has the least number of $k$-faces, for each $k = 1, 2$, 
among all right-angled $3$-dimensional polytopes in $H^3$. 
Then $a_2(P) = a_2(P^3) = 6$ and $a_1(P) = a_1(P^3) = 9$. 
From the above main inequality with $k =1$, we deduce that each side of $P$ is a triangle
with angles $0$ or $\pi/2$. Hence, each side of $P$ is either congruent to $P^2$ or is an ideal triangle. 
In particular, $P$ is non-compact.

Let $a_0'(P)>0$ be the number of ideal vertices of $P$. 
As the Euler characteristic of  the closure of $P$ in the projective desk model of $H^3$ is $1$, we have
$$a_0(P)+a_0'(P) - a_1(P) + a_2(P) - a_3(P) = 1,$$ 
and so $a_0(P)+a_0'(P) = 5$. 

The link of an actual vertex of $P$ is a right-angled spherical polygon, hence a triangle, and
the link of an ideal vertex is a right-angled Euclidean polygon, hence a rectangle. 
Thus each actual vertex of $P$ is the endpoint of $3$ edges and each ideal vertex 
is the endpoint of $4$ edges.  Hence 
$$a_1(P) = \big(3a_0(P)+4a_0'(P)\big)/2,$$
and so $a_0(P) = 2$ and $a_0'(P) = 3$ (note that $a_0(P)=6,a_0'(P)=0$ is not possible as $P$
is non-compact). From the equation
$$a_0(P) = \frac{1}{3}\sum_{i=1}^6 a_0(S_i)$$
and the fact that each triangle $S_i$ has at most one actual vertex, 
we deduce that $S_i$ has exactly one actual vertex for each $i$, 
Therefore $S_i$ is congruent to $P^2$ for each $i$. 

To see that $P$ is congruent to $P^3$, position $P$ so that a side of $P$ 
matches with a side of $P^3$ and $P$ lies on the same side of the plane 
spanned by this side as $P^3$ does.  
Then all six sides of $P$ and $P^3$ match up, and so $P = P^3$. 

Suppose now that $n > 3$ and $P^{n-1}$  is unique up to congruence 
with the property of having the least number of $k$-faces, for each $k = 1,\ldots, n-2$, 
among all right-angled $(n-1)$-dimensional polytopes in $H^{n-1}$. 

Let $P$ be a right-angled $n$-dimensional polytope such that $P$ 
has the least number of $k$-faces, for each $k = 1,\ldots, n-1$, 
among all right-angled $n$-dimensional polytopes in $H^n$. 
Then from the above main inequality, 
we deduce that each side $S_i$ of $P$ has the least number of 
$k$-faces, for each $k = 1,\ldots, n-2$, 
among all right-angled $(n-1)$-dimensional polytopes. 
Therefore $S_i$ is congruent to $P^{n-1}$ for each $i$ by the induction hypothesis. 
The same positioning and side-matching argument used in the $n = 3$ case 
implies that $P$ is congruent to $P^n$ for $n = 3, \ldots, 6$. 
\end{proof}

\begin{remark}
The uniqueness part of Theorem \ref{thm:2} is not true for $n = 2$, because 
there are two congruence classes of right-angled triangles  in $H^2$, 
namely the class of $P^2$ and the class of ideal triangles. 
\end{remark}

\begin{theorem}\label{thm:3}
The polygon $P^2$ has the least area among all right-angled polygons in $H^2$. 
\end{theorem}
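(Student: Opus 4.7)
The plan is to reduce the claim to a one-line application of the two-dimensional Gauss--Bonnet formula together with an integer inequality.

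First I would fix notation: let $P$ be an arbitrary finite-area right-angled polygon in $H^2$, and let $a_0$ and $a_0'$ denote, respectively, the number of actual vertices (where the interior angle is $\pi/2$) and the number of ideal vertices (where the interior angle is $0$). Since $P$ is right-angled, these are the only two types of vertices that can occur.

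Next I would apply Gauss--Bonnet to the closure of $P$ in the projective disk model. Because $P$ has total interior angle sum $(\pi/2)a_0 + 0\cdot a_0'$ and $a_0 + a_0'$ vertices,
\[
\vol(P) \;=\; \pi(a_0 + a_0' - 2) - \frac{\pi}{2}a_0 \;=\; \frac{\pi}{2}\bigl(a_0 + 2a_0' - 4\bigr).
\]

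Now I would finish with an integer argument. Since $\vol(P) > 0$ and $a_0, a_0'$ are non-negative integers, the quantity $a_0 + 2a_0' - 4$ is a positive integer, so $a_0 + 2a_0' \geq 5$, and hence $\vol(P) \geq \pi/2$. The polygon $P^2$, being a right triangle with $(a_0,a_0') = (1,2)$, plugs into the formula to give $\vol(P^2) = \pi/2$, realizing the bound.

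There is essentially no obstacle here; the only thing to be careful about is that the positivity of the area forces the integer $a_0 + 2a_0' - 4$ to be $\geq 1$ rather than $\geq 0$, which is what makes the bound $\pi/2$ sharp. I would also note (mirroring the remark after Theorem~\ref{thm:2}) that this argument does not yield uniqueness, since the configurations $(a_0,a_0') = (3,1)$ and $(5,0)$ also saturate the inequality and are realized by right-angled hyperbolic quadrilaterals and pentagons of area $\pi/2$.
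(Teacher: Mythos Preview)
Your proof is correct and arrives at the same inequality as the paper, but by a slightly more elementary route. The paper passes through the associated right-angled Coxeter group $\Gamma$: it invokes the Gauss--Bonnet identity $\mathrm{Area}(P) = -2\pi\chi(\Gamma)$ and then Chiswell's formula $\chi(\Gamma) = 1 - a_1(P)/2 + a_0(P)/4$ for the Euler characteristic of a right-angled Coxeter group, where $a_1$ counts edges and $a_0$ counts actual vertices. Substituting $a_1 = a_0 + a_0'$ recovers exactly your expression $\mathrm{Area}(P) = \tfrac{\pi}{2}(a_0 + 2a_0' - 4)$, after which both arguments finish with the same integer bound. Your direct use of the angle-defect form of Gauss--Bonnet bypasses the Coxeter-group detour and is self-contained; the paper's version has the advantage of being phrased in exactly the shape needed for the parallel argument in dimension~4 (Theorem~\ref{thm:4}). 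Your closing observation about the non-uniqueness cases $(a_0,a_0') = (3,1)$ and $(5,0)$ matches the paper's remark.
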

\begin{proof}
Let $P$ be a right-angled polygon of finite area in $H^2$, and let $\Gamma$ 
be the corresponding right-angled reflection group. 
By the Gauss-Bonnet theorem, 
$$\mathrm{Area}(P) = -2\pi\chi(\Gamma).$$ 
By \cite[Corollary 2]{Chiswell92}, we have
$$\chi(\Gamma) = 1-\frac{a_1(P)}{2}+\frac{a_0(P)}{4} =\frac{4-2a_1(P) +a_0(P)}{4}.$$
Hence the least $-\chi(\Gamma)$ can be is $1/4$, which is attained by $P^2$. 
\end{proof}

\begin{remark}
Both a right-angled quadrilateral with one ideal vertex (which is just $\Delta^2$ reflected across
the side labeled $2$)
and a right-angled pentagon also have the least area among all right-angled polygons in $H^2$.   
\end{remark}

\begin{theorem}\label{thm:4}
The polytope $P^4$ has the least volume among all right-angled 4-dimensional polytopes in $H^4$. 
\end{theorem}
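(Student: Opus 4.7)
\emph{Proof plan.} The strategy is to adapt the argument used in the proof of Theorem~\ref{thm:3}, replacing Chiswell's two-dimensional Euler characteristic formula by its four-dimensional analogue and combining it with an elementary integrality observation. Let $P$ be an arbitrary right-angled $4$-dimensional polytope in $H^4$, and let $\Gamma(P)$ be the hyperbolic Coxeter group generated by the reflections in the sides of $P$, so that $P$ is a fundamental polytope for $\Gamma(P)$. By the Gauss-Bonnet theorem,
$$
\vol(P) = \kappa_4\,\chi(\Gamma(P)) = \frac{4\pi^2}{3}\,\chi(\Gamma(P)),
$$
and $\vol(P)>0$ yields $\chi(\Gamma(P))>0$.

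Next I would compute $\chi(\Gamma(P))$ as the alternating sum of reciprocal stabilizer orders over the faces of $P$. Because $P$ is right-angled, a codimension-$k$ face $F$ of $P$ with non-empty interior in $H^4$ is pointwise stabilized in $\Gamma(P)$ by the elementary abelian $2$-group of order $2^k$ generated by the reflections in the $k$ sides of $P$ containing $F$; the ideal vertices of $P$, being fixed by infinite Euclidean crystallographic groups, contribute nothing to $\chi$. This gives
$$
\chi(\Gamma(P)) = 1 - \frac{a_3(P)}{2} + \frac{a_2(P)}{4} - \frac{a_1(P)}{8} + \frac{a_0(P)}{16},
$$
where, as in the proof of Theorem~\ref{thm:2}, $a_0(P)$ denotes the number of actual (non-ideal) vertices of $P$.

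The remaining step is the decisive one. Multiplying through by $16$,
$$
16\,\chi(\Gamma(P)) = 16 - 8 a_3(P) + 4 a_2(P) - 2 a_1(P) + a_0(P) \in \Z,
$$
and combined with $\chi(\Gamma(P))>0$ this forces $16\,\chi(\Gamma(P)) \geq 1$. Hence $\chi(\Gamma(P)) \geq 1/16$ and
$$
\vol(P) \geq \frac{4\pi^2}{3}\cdot\frac{1}{16} = \frac{\pi^2}{12} = \vol(P^4),
$$
as required. The only possibly delicate point is justifying the orbifold Euler characteristic formula in the non-compact setting, in particular that ideal vertices drop out of the sum; this is entirely analogous to the two-dimensional situation invoked in the proof of Theorem~\ref{thm:3}, and does not require any of the sharper combinatorial inequalities from Theorem~\ref{thm:2}.
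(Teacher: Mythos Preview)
Your proposal is correct and follows essentially the same approach as the paper: Gauss--Bonnet plus Chiswell's formula $\chi(\Gamma)=1-a_3/2+a_2/4-a_1/8+a_0/16$, together with the observation that $16\chi(\Gamma)$ is a positive integer, giving $\chi(\Gamma)\geq 1/16=\chi(\Gamma_2^4)$. You have merely made explicit the integrality and positivity steps that the paper leaves implicit, and added a helpful remark on why ideal vertices do not enter the Euler characteristic sum.
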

\begin{proof}
Apply the same argument as in the proof of Theorem \ref{thm:3} with
$\mathrm{Volume}(P) = (4\pi^2/3)\chi(\Gamma)$ 
and
\begin{eqnarray*}
\chi(\Gamma) & =  & 1-\frac{a_3(P)}{2}+\frac{a_2(P)}{4} -\frac{a_1(P)}{8} +\frac{a_0(P)}{16} \\ 
                         &  =  & \frac{16 - 8a_3(P)+4a_2(P)-2a_1(P) +a_0(P)}{16}.
\end{eqnarray*}                         
Hence the least $\chi(\Gamma)$ can be is $1/16$, which is attained by $P^4$. 
\end{proof}

\section{The geometry of the Polytope $P^6$}\label{section:4}

We now focus on dimension 6. 
In \S\ref{section:5} we will use some of the remarkable properties of $P^6$ to construct minimal
volume orientable hyperbolic $6$-manifolds, so we record here some facts specific to $P^6$. 
Probably the most beautiful is that P. H. Schoute \cite{Schoute} associated the arrangement of the 27 vertices of the 6-dimensional Gosset
polytope $2_{21}$ with the arrangement of the 27 straight lines in a general cubic surface
(see \cite{Coxeter40}).
Therefore the arrangement of the 27 sides of $P^6$ is related to the arrangement of the 27 
lines in a general cubic surface in a natural way. 
For the history of the 27 lines in a cubic surface, 
see the interesting article of Coxeter \cite{Coxeter83}. 

\begin{figure}
\centering
\begin{pspicture}(0,0)(12.5,7.8)
\rput(6,3.8){\BoxedEPSF{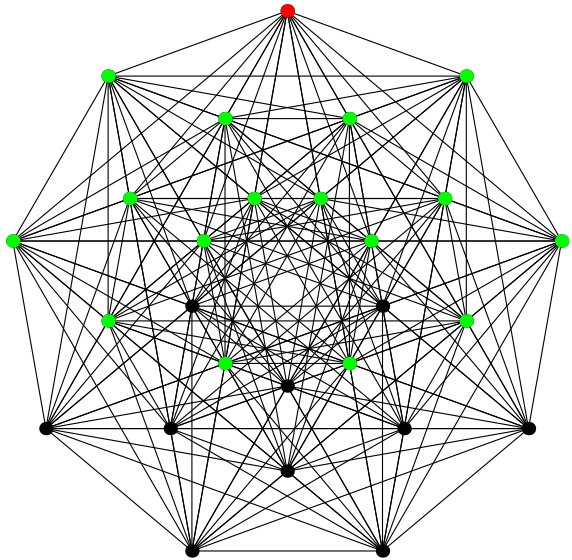 scaled 1050}}
\end{pspicture}
\caption{The 1-skeleton of the 6-dimensional Gosset polytope $2_{21}$. 
The top most vertex (in red) is joined by edges to the green vertices. Some of these edges
lie directly underneath others in this orthogonal projection.}
\label{fig:4}
\end{figure}

The right-angled polytope $P^6$ has 27 sides each congruent to $P^5$.  
The outward Lorentz unit normal vectors for these sides are listed in Table \ref{table:2}.
The vectors $u_1,\ldots, u_6$ in Table \ref{table:2} are normal vectors for the coordinate 
hyperplanes $x_i = 0$, for $i = 1,\ldots, 6$. 
Hence the polytope $P^6$ is positioned in $H^6$ so that six of its sides are bounded 
by the six coordinate hyperplanes $x_i=0$ for $i=1,\ldots, 6$,   
and these six sides intersect at the actual vertex $e_7$ of $P^6$, which is the center of $H^6$. 
The adjacency relation between the 27 sides of $P^6$ is represented by the 1-skeleton of 
the Gosset polytope $2_{21}$ shown in Figure 4. 

The matrix $R_i$ representing the reflection in the $i$th side of $P^6$ with Lorentz unit normal vector $u_i$ 
is given by the standard reflection formula
$$R_ie_j = e_j - 2\langle u_i , e_j\rangle u_i$$
where $e_1,\ldots, e_7$ are the standard basis vectors of $\realnos^7$. 
The reflection in the $i$th side of $P^6$, for $1\leq i \leq 6$, with unit normal vector $u_i = -e_i$,  
is represented by the diagonal $7 \times 7$ matrix $R_i= \mathrm{diag}(1,\ldots, 1, -1, 1,\ldots,1)$ 
with the minus sign in the $i$th slot. 
The reflection in the 7th side of $P^6$, with unit normal vector $u_7 = (1,1,0,0,0,0,1)$, 
 is represented by the Lorentzian $7\times 7$ matrix
$$R_7 = 
\left(\begin{array}{rrrrrrr}
-1 & -2 & 0 & 0 & 0 & 0 & 2 \\
-2 & -1 & 0 & 0 & 0 & 0 & 2 \\
 0 &  0  & 1 & 0 & 0 & 0 & 0 \\
 0 &  0  & 0 & 1 & 0 & 0 & 0 \\
 0 &  0  & 0 & 0 & 1 & 0 & 0 \\
 0 &  0  & 0 & 0 & 0 & 1 & 0 \\
 -2 & -2 & 0 & 0 & 0 & 0 & 3 
 \end{array}\right).
 $$
The unit normal vectors $u_8,\ldots, u_{21}$ are obtained from $u_7$ be permuting the first six coordinates. 
Hence the matrices $R_8,\ldots, R_{21}$ are obtained from $R_7$ be simultaneously permuting the first six rows and columns of $R_7$. 
The reflection in the 22nd side of $P^6$, with unit normal vector $u_{22} = (1,1,1,1,1,0,2)$, 
 is represented by the Lorentzian $7\times 7$ matrix
$$R_{22} = 
\left(\begin{array}{rrrrrrr}
-1 & -2 & -2 & -2 & -2 & 0 & 4 \\
-2 & -1 & -2 & -2 & -2 & 0 & 4 \\
-2 & -2  & -1 & -2 & -2 & 0 & 4 \\
-2 & -2  & -2 & -1 & -2 & 0 & 4 \\
-2 & -2  & -2 & -2 & -1 & 0 & 4 \\
 0 &  0  & 0 & 0 & 0 & 1 & 0 \\
 -4 & -4 & -4 & -4 & -4 & 0 & 9 
 \end{array}\right).
 $$
The unit normal vectors $u_{23},\ldots, u_{27}$ are obtained from $u_{22}$ be permuting the first six coordinates. 
Hence the matrices $R_{23},\ldots, R_{27}$ are obtained from $R_{22}$ be simultaneously permuting the first six rows and columns of $R_{22}$.

The polytope $P^6$ has 72 actual vertices and 27 ideal vertices, 432 ray edges (one actual and one ideal endpoint) 
and 216 line edges (two ideal endpoints),  1080 2-faces each congruent to $P^2$, 720 3-faces each congruent to $P^3$, 
216 ridges each congruent to $P^4$, and 27 sides each congruent to $P^5$.  The face count 72+27, 432+216, 
1080, 720, 216, 27 occurs in the reverse order in Gosset \cite{Gosset} and Coxeter \cite{Coxeter28} 
as the face count for the Euclidean semi-regular polytope $2_{21}$ dual to $P^6$. 
The sides of $2_{21}$ corresponding to the actual vertices of $P^6$ are regular 5-simplices, 
and the sides of $2_{21}$ corresponding to the ideal vertices of $P^6$ are 5-dimensional cross-polytopes
(or $5$-octahedra). 
The link of an actual vertex of $P^6$ is a right-angled spherical 5-simplex, 
and the link of an ideal vertex is a Euclidean 5-dimensional hypercube. 
Table \ref{table:3} lists the 72 actual and 27 ideal vertices of $P^6$.

Finally, as discussed in \S 2, $\vol(P^6)=\pi^3/15$, 
whereas a hyperbolic $6$-manifold $M$ of finite volume with $\chi(M)=-1$
has $\vol(M)=8\pi^3/15$.

\begin{table} % 2
$$\begin{array}{lll}
u_1 = (-1,0,0,0,0,0,0) &  u_{10} = (1,0,0,1,0,0,1)  &  u_{19} =  (0,0,1,0,0,1,1) \\
u_2 = (0,-1,0,0,0,0,0) &  u_{11} = (0,1,0,1,0,0,1)  &  u_{20} =  (0,0,0,1,0,1,1) \\
u_3 = (0,0,-1,0,0,0,0) &  u_{12} = (0,0,1,1,0,0,1)  &  u_{21} =  (0,0,0,0,1,1,1) \\
u_4 = (0,0,0,-1,0,0,0) &  u_{13} = (1,0,0,0,1,0,1)  &  u_{22} =  (1,1,1,1,1,0,2) \\
u_5 = (0,0,0,0,-1,0,0) &  u_{14} = (0,1,0,0,1,0,1)  &  u_{23} =  (1,1,1,1,0,1, 2) \\
u_6 = (0,0,0,0,0,-1,0) &  u_{15} = (0,0,1,0,1,0,1)  &  u_{24} =  (1,1,1,0,1,1,2) \\
u_7 = (1,1,0,0,0,0,1)  &  u_{16} = (0,0,0,1,1,0,1)  &  u_{25} =  (1,1,0,1,1,1,2) \\
u_8 = (1,0,1,0,0,0,1)  &  u_{17} = (1,0,0,0,0,1,1)  &  u_{26} =  (1,0,1,1,1,1,2) \\
u_9 = (0,1,1,0,0,0,1)  &  u_{18} = (0,1,0,0,0,1,1)  &  u_{27} =  (0,1,1,1,1,1,2) \\
\end{array}$$
\caption{The outward Lorentz unit normal vectors of the sides of $P^6$.}
\label{table:2}
\end{table}
\begin{table}  % 3
$$\begin{array}{lll}
v_1 \  = (0,0,0,0,0,0,1) &  v_{34}= (1,1,1,0,1,2,3)  &  v_{67} = (2,1,2,2,1,1,4) \\
v_2 \  = (0,0,0,1,1,1,2) &  v_{35}= (1,1,1,0,2,1,3)  &  v_{68} = (2,2,1,1,1,2,4) \\
v_3 \  = (0,0,1,0,1,1,2) &  v_{36}= (1,1,1,1,0,2,3)  &  v_{69} = (2,2,1,1,2,1,4) \\
v_4 \  = (0,0,1,1,0,1,2) &  v_{37}= (1,1,1,1,2,0,3)  &  v_{70} = (2,2,1,2,1,1,4) \\
v_5 \  = (0,0,1,1,1,0,2) &  v_{38}= (1,1,1,2,0,1,3)  &  v_{71} = (2,2,2,1,1,1,4) \\
v_6 \  = (0,1,0,0,1,1,2) &  v_{39}= (1,1,1,2,1,0,3)  &  v_{72} = (2,2,2,2,2,2,5) \\
v_7 \  = (0,1,0,1,0,1,2) &  v_{40}= (1,1,1,2,2,2,4)  &  v_{73} = (0,0,0,0,0,1,1) \\
v_8 \  = (0,1,0,1,1,0,2) &  v_{41}= (1,1,2,0,1,1,3)  &  v_{74} = (0,0,0,0,1,0,1) \\
v_9 \  = (0,1,1,0,0,1,2) &  v_{42}= (1,1,2,1,0,1,3)  &  v_{75} = (0,0,0,1,0,0,1) \\
v_{10}= (0,1,1,0,1,0,2) & v_{43}= (1,1,2,1,1,0,3)  &  v_{76} = (0,0,1,0,0,0,1) \\
v_{11}= (0,1,1,1,0,0,2)  & v_{44}= (1,1,2,1,2,2,4) &  v_{77} =  (0,0,1,1,1,1,2) \\
v_{12}= (0,1,1,1,1,2,3)  & v_{45}= (1,1,2,2,1,2,4)  &  v_{78} = (0,1,0,0,0,0,1) \\
v_{13}= (0,1,1,1,2,1,3)  & v_{46}= (1,1,2,2,2,1,4)  &  v_{79} =  (0,1,0,1,1,1,2) \\
v_{14}= (0,1,1,2,1,1,3)  & v_{47}= (1,2,0,1,1,1,3)  &  v_{80} =  (0,1,1,0,1,1,2) \\
v_{15}= (0,1,2,1,1,1,3)  & v_{48}= (1,2,1,0,1,1,3)  &  v_{81} =  (0,1,1,1,0,1,2) \\
v_{16}= (0,2,1,1,1,1,3)  & v_{49}= (1,2,1,1,0,1,3)  &  v_{82} =  (0,1,1,1,1,0,2) \\
v_{17}= (1,0,0,0,1,1,2)  & v_{50}= (1,2,1,1,1,0,3)  &  v_{83} = (1,0,0,0,0,0,1) \\
v_{18}= (1,0,0,1,0,1,2)  & v_{51}= (1,2,1,1,2,2,4)  &  v_{84} =  (1,0,0,1,1,1,2) \\
v_{19}= (1,0,0,1,1,0,2)  & v_{52}= (1,2,1,2,1,2,4)  &  v_{85} =  (1,0,1,0,1,1,2) \\
v_{20}= (1,0,1,0,0,1,2)  & v_{53}= (1,2,1,2,2,1,4)  &  v_{86} =  (1,0,1,1,0,1,2) \\
v_{21}= (1,0,1,0,1,0,2)  & v_{54}= (1,2,2,1,1,2,4)  &  v_{87} =  (1,0,1,1,1,0,2) \\
v_{22}= (1,0,1,1,0,0,2)  & v_{55}= (1,2,2,1,2,1,4)  &  v_{88} =  (1,1,0,0,1,1,2) \\
v_{23}= (1,0,1,1,1,2,3)  & v_{56}= (1,2,2,2,1,1,4)  &  v_{89} =  (1,1,0,1,0,1,2) \\
v_{24}= (1,0,1,1,2,1,3)  & v_{57}= (2,0,1,1,1,1,3)  &  v_{90} =  (1,1,0,1,1,0,2)\\
v_{25}= (1,0,1,2,1,1,3)  & v_{58}= (2,1,0,1,1,1,3)  &  v_{91} =  (1,1,1,0,0,1,2) \\
v_{26}= (1,0,2,1,1,1,3)  & v_{59}= (2,1,1,0,1,1,3)  &  v_{92} =  (1,1,1,0,1,0,2) \\
v_{27}= (1,1,0,0,0,1,2)  & v_{60}= (2,1,1,1,0,1,3)  &  v_{93} =  (1,1,1,1,0,0,2) \\
v_{28}= (1,1,0,0,1,0,2)  & v_{61}= (2,1,1,1,1,0,3)  &  v_{94} =  (1,1,1,1,1,2,3) \\
v_{29}= (1,1,0,1,0,0,2)  & v_{62}= (2,1,1,1,2,2,4)  &  v_{95} =  (1,1,1,1,2,1,3) \\
v_{30}= (1,1,0,1,1,2,3)  & v_{63}= (2,1,1,2,1,2,4)  &  v_{96} =  (1,1,1,2,1,1,3) \\
v_{31}= (1,1,0,1,2,1,3)  & v_{64}= (2,1,1,2,2,1,4)  &  v_{97} =  (1,1,2,1,1,1,3) \\
v_{32}= (1,1,0,2,1,1,3)  & v_{65}= (2,1,2,1,1,2,4)  &  v_{98} =  (1,2,1,1,1,1,3) \\
v_{33}= (1,1,1,0,0,0,2)  & v_{66}= (2,1,2,1,2,1,4)  &  v_{99} =  (2,1,1,1,1,1,3)  \\
\end{array}$$
\caption{The 72 actual and 27 ideal vertices of $P^6$.}
\label{table:3}
\end{table}

\section{The Geometric Construction of the Manifolds} \label{section:5}

We now describe the geometric construction of our examples. 
The volume calculations in \S\ref{section:2} show that
the manifolds we seek will have the right volume if they can be decomposed into
eight copies of $P^6$. 

The set $Q^6=\Kappa^6P^6$, which is the union of 64 copies of $P^6$, 
is a right-angled 6-dimensional polytope with 252 sides. 
In 2003, we constructed hyperbolic 6-manifolds, with $\chi =-8$,
by gluing together the sides of $Q^6$ by a proper side-pairing 
with side-pairing maps of the form $sk$ with $k$ in $\Kappa^6$ and $s$ 
a reflection in a side $S$ of $Q_6$. 
The side-pairing map $sk$ pairs the side $S'=kS$ to $S$.  
For a discussion of proper side-pairings, see \cite[\S 11.1 and \S 11.2]{Ratcliffe06}. 
We call such a side-pairing of $Q^6$ {\it simple}. 
We searched for simple side-pairings of $Q^6$ that yield 
a hyperbolic 6-manifold $M$ with a freely acting $\Z/8$ symmetry group 
that permutes the 64 copies of $P^6$ making up $M$ in such a way 
that the resulting quotient manifold is obtained by gluing 
together 8 copies of $P^6$. 
Such a quotient manifold has $\chi = -8/8=-1$. 
This is easier said than done, since the search space 
of all possible side-pairings of $Q^6$ is very large. 
We succeeded in finding desired side-pairings of $Q^6$ by employing a strategy 
that greatly reduces the search space. 
The strategy is to extend a side-pairing in dimension 5 with the desired 
properties to a side-pairing in dimension 6 with the desired properties. 

Let $Q^5 = \{x\in Q^6: x_1 = 0\}$. 
Then $Q^5$ is a right-angled 5-dimensional polytope with 72 sides. 
Note that $Q^5$ is the union $\Kappa^5P^5$ of 32 copies of $P^5$ where 
$P^5= \{x\in P^6: x_1=0\}$.  
Here we are identifying $P^5$ with side 1 of $P^6$. 
A simple side-pairing of $Q^6$ that yields a hyperbolic 6-manifold $M$ 
restricts to a simple side-pairing of $Q^5$ 
that yields a hyperbolic 5-manifold which is a totally geodesic hypersurface of $M$. 
All the orientable hyperbolic 5-manifolds that are obtained by gluing together 
the sides of $Q^5$ by a simple side-pairing are classified in \cite{Ratcliffe04}. 

We started with the orientable hyperbolic 5-manifold $N$, numbered 27 in \cite{Ratcliffe04},  
obtained by gluing together the sides of $Q^5$ by the simple side-pairing 
with side-pairing code {\tt 2B7JB47JG81}, explained in \cite{Ratcliffe04}. 
The manifold $N$ has a freely acting $\Z/8$ symmetry group 
that permutes the 32 copies of $P^5$ making up $N$ in such a way that 
the resulting quotient manifold is obtained by gluing together 4 copies of $P^5$. 
A generator of the $\Z/8$ symmetry group of $N$ is represented by the Lorentzian $6\times 6$ matrix
$$
\left(\begin{array}{rrrrrr}
    1 & 0 & 0 & 1 & 0 &  -1 \\
    0 & 0 & 0 & 0 & 1 &   0 \\
   -1 & 0 &-1 & 0 & 0 &  1 \\
    0 & 1 & 0 & 0 & 0 &  0 \\
    0 & 0 &-1 &-1 & 0 & 1 \\
   -1 & 0 &-1 & -1 & 0 & 2
        \end{array} \right).
$$
The strategy is to search for simple side-pairings of $Q^6$ that 
extend the side-pairing of $Q^5$ yielding $N$,
and that give hyperbolic 6-manifolds with a freely acting $\Z/8$ 
symmetry group. A generator 
for this $\Z/8$ is represented by the following Lorentzian $7\times 7$ matrix 
$A$ that extends the above $6\times 6$ matrix:
$$
A = \left(\begin{array}{rrrrrrr}
    1 & 0 & 0 & 0 & 0 & 0 & 0 \\
     0 & 1 & 0 & 0 & 1 & 0 & -1 \\
     0 & 0 & 0 & 0 & 0 & 1 & 0 \\
     0 &-1 & 0 &-1 & 0 & 0 & 1 \\
     0 & 0 & 1 & 0 & 0 & 0 & 0 \\
     0 & 0 & 0 &-1 & -1 & 0 & 1 \\
     0 &-1 & 0 &-1 & -1 & 0 & 2
        \end{array} \right).
$$      
For such a side-pairing the resulting quotient 6-manifold can be obtained 
by gluing together 8 copies of $P^6$ by a proper side-pairing. 
By a computer search in 2003, we found 7 simple side-pairings of $Q^6$ 
that induce proper side-pairings of 8 copies of $P^6$ in this way, 
and hence we found 7 hyperbolic $6$-manifolds with $\chi = -1$. 
Each of these 7 manifolds is noncompact with five cusps and
volume $8\vol(P^6)=8\pi^3/15$. 
These 7 hyperbolic $6$-manifolds represent different isometry types, 
since they represent 7 different homology types. 

\begin{table} % 4
\begin{center}
\begin{tabular}{lllllllll}
$N$&$SP $& $O$ & $C$ & \ \ $H_1$&\ \ $H_2$&\ \ $H_3$&\ \ $H_4$&\ \ $H_5$\\
&&&&$\phantom{\mathbb Z}$0248&$\phantom{\mathbb Z}$0248&$\phantom{\mathbb Z}$0248&
$\phantom{\mathbb Z}$0248&$\phantom{\mathbb Z}$0248\\
1&{\tt MVStfMSJGgJgWDtD2fV84}& 1 & 5 & 
\phantom{${\mathbb Z}$}0401&
\phantom{${\mathbb Z}$}1810&
\phantom{${\mathbb Z}$}4531&
\phantom{${\mathbb Z}$}5000&
\phantom{${\mathbb Z}$}4000\\
2&{\tt MlStfMSJGgJgWDtD2fn84}& 1 & 5 & 
\phantom{${\mathbb Z}$}0301&
\phantom{${\mathbb Z}$}1900&
\phantom{${\mathbb Z}$}3622&
\phantom{${\mathbb Z}$}4000&
\phantom{${\mathbb Z}$}4000\\
3&{\tt k14ONEJdN8ZEdWGYIP1l2}& 0 & 5 & 
\phantom{${\mathbb Z}$}0401&
\phantom{${\mathbb Z}$}1910&
\phantom{${\mathbb Z}$}4821&
\phantom{${\mathbb Z}$}1500&
\phantom{${\mathbb Z}$}0000\\
4&{\tt f65UMFKcN9aEdXHaKU6f3}& 0 & 5 &
\phantom{${\mathbb Z}$}0401&
\phantom{${\mathbb Z}$}1810&
\phantom{${\mathbb Z}$}8710&
\phantom{${\mathbb Z}$}5500&
\phantom{${\mathbb Z}$}0000\\
5&{\tt l15OMFIcN9YEdXHYIO1l3}& 0 & 5 &
\phantom{${\mathbb Z}$}0401&
\phantom{${\mathbb Z}$}2900&
\phantom{${\mathbb Z}$}7810&
\phantom{${\mathbb Z}$}4400&
\phantom{${\mathbb Z}$}1000\\
6&{\tt l65OMFIcN9YEdXHYIO6l3}& 0 & 5 &
\phantom{${\mathbb Z}$}0401&
\phantom{${\mathbb Z}$}2800&
\phantom{${\mathbb Z}$}7910&
\phantom{${\mathbb Z}$}4400&
\phantom{${\mathbb Z}$}1000\\
7&{\tt lx5OMFIcN9YEdXHYIOyl3}& 0 & 5 & 
\phantom{${\mathbb Z}$}0211&
\phantom{${\mathbb Z}$}2800&
\phantom{${\mathbb Z}$}4821&
\phantom{${\mathbb Z}$}1400&
\phantom{${\mathbb Z}$}1000\\
8&{\tt ly5OMFIcN9YEdXHYIOxl3}& 0 & 5 &
\phantom{${\mathbb Z}$}0211&
\phantom{${\mathbb Z}$}2800&
\phantom{${\mathbb Z}$}4930&
\phantom{${\mathbb Z}$}1400&
\phantom{${\mathbb Z}$}1000\\
9&{\tt fx5UMFKcN9aEdXHaKUyf3}& 0 & 5 &
\phantom{${\mathbb Z}$}0301&
\phantom{${\mathbb Z}$}1900&
\phantom{${\mathbb Z}$}5630&
\phantom{${\mathbb Z}$}2500&
\phantom{${\mathbb Z}$}0000\\
\end{tabular}
\end{center}
\caption{Side-pairing codes for $Q^6$, orientability,  number of cusps, 
and homology groups of nine  
hyperbolic 6-manifolds with $\chi = -1$. The side pairing code is explained
in \S\ref{section:6}.}
\label{table:4}
\end{table}

Table \ref{table:4} lists side-pairing codes for simple side-pairings of $Q^6$ 
whose $\Z/8$ quotient manifold has homology groups isomorphic to  
$$\Z^a\oplus(\Z/2)^b\oplus(\Z/4)^c\oplus(\Z/8)^d$$
for nonnegative integers $a,b,c,d$ encoded by $abcd$ in the table. 
The side-pairing code is explained in \S\ref{section:6} below. 
The column headed by $O$  indicates orientability with $1$ indicating orientable. 
The seven nonorientable manifolds in Table \ref{table:4} are the 
manifolds we constructed in 2003 and announced in \cite{E-R-T}.

Note that the side-pairing codes for the nonorientable manifolds in Table \ref{table:4} are permutations 
of the side-pairing codes in \cite[Table 1]{E-R-T} because 
we changed the ordering of the sides of $Q^6$ to conform with our previous 
convention for the ordering of the sides of $Q^5$ in \cite{Ratcliffe04}. 

The seven nonorientable manifolds in Table \ref{table:4}
all have a one-sided, orientable, totally geodesic hypersurface 
isometric to the quotient of $N$ by its freely acting $\integers/8$ symmetry group. 
In \cite{E-R-T}, 
we expressed our wish to construct \emph{orientable\/} hyperbolic 6-manifolds of 
finite volume with $\chi = -1$ 
by a similar construction. This requires the one-sided hypersurface to be nonorientable
by \cite[Theorem III of Chapter X]{S-T}. 

The lynchpin of our construction of nonorientable hyperbolic 6-manifolds with 
$\chi = -1$ is the hyperbolic 5-manifold $N$ 
with its freely acting $\integers/8$ symmetry group. 
We found $N$ by classifying all the orientable hyperbolic 5-manifolds 
that are obtained by gluing together the sides of $Q^5$ by a simple side-pairing. 
In \cite{Ratcliffe04}, we proved that there are 3607 isometry classes of such manifolds.  
In the process of this classification, we determined the group of symmetries of each of these manifolds. 
The manifold $N$ stood out as a manifold with the largest group of symmetries.  

We first envisioned constructing an orientable hyperbolic 6-manifold with $\chi = -1$ 
by a similar construction, using a nonorientable hyperbolic 5-manifold $N'$ 
with a freely acting $\integers/8$ symmetry group. Our first thought was to classify 
all the nonorientable hyperbolic 5-manifolds that are obtained by gluing together the sides of $Q^5$ 
by a simple side-pairing, and look for a freely acting $\integers/8$ symmetry group among 
the most symmetric manifolds.  
We expected a considerably larger number of isometry classes 
than in the orientable case, and so we put off this project in order to wait for Moore's Law to 
increase the power of desktop computers. Indeed, in the last eight years, desktop computers 
have increased in power by a factor of about $2^4$.  When we resumed this project in 2010, 
Steven Tschantz decided to rewrite his original C-programs in the higher level {\it Mathematica} language. 
The {\it Mathematica} programs are less complicated and more robust, 
but they run about 16 times slower than the original C-programs, 
and so we did not gain any more computing power. 

We need a different approach to construct orientable hyperbolic 6-manifolds  with $\chi = -1$. 
Tschantz decided to reverse-engineer our construction by searching directly for 
a proper side-pairing of eight copies of $P^6$ that our original construction produces at its conclusion,  
and finish with a nonorientable hyperbolic 5-manifold $N'$ from which our original construction 
would begin. 

The search space for proper side-pairings of eight copies of $P^6$ is very large. 
The key idea to make the search space reasonable is the new observation that our original construction 
leads to side-pairings of eight copies of $P^6$ of a very restrictive type. 
The epimomorphism $\Gamma^6\rightarrow\Sigma^6$ 
projects the matrix $A$ to the matrix 
$$
\bar{A}=\left(\begin{array}{rrrrrrr}
    1 & 0 & 0 & 0 & 0 & 0 & 0 \\
     0 & -1 & 0 & 0 & -1 & 0 & 1 \\
     0 & 0 & 0 & 0 & 0 & 1 & 0 \\
     0 &-1 & 0 &-1 & 0 & 0 & 1 \\
     0 & 0 & 1 & 0 & 0 & 0 & 0 \\
     0 & 0 & 0 &-1 & -1 & 0 & 1 \\
     0 &-1 & 0 &-1 & -1 & 0 & 2
        \end{array} \right).
$$      
The restriction on the side-pairing of the eight copies of $P^6$ is that sides are paired via 
the action of the cyclic group of order eight generated by $\bar{A}$. 

The matrix $\bar{A}$ acts as a symmetry of $P^6$. 
The permutation $\sigma$ of the 27 sides of $P^6$ induced by $\bar{A}$ is 
the following product of disjoint cycles
$$(2\ 11\ 4\ 20\ 9\ 21\ 12\ 14)(3\ 5\ 18\ 19\ 27\ 15\ 16\ 6)(7\ 17\ 23\ 24 \ 26\ 22\ 13 \ 10)(8\ 25).$$
The ordering of the sides of $P^6$ is as in Table \ref{table:2}.  
In particular, $\bar{A}$ leaves side 1 of $P^6$ invariant.  

Let $8P^6$ be the disjoint union of 8 copies of $P^6$ numbered 1 through 8. 
By a side-pairing of $8P^6$ via the action of the cyclic group generated by $\bar{A}$, 
we mean that we pair side $j$ of polytope $i$ to the side $\sigma^{p_{ij}}(j)$ of polytope $k_{ij}$ 
by the side-pairing transformation $R_{ij}(\bar{A})^{p_{ij}}$ where $R_{ij}$ 
is the reflection in the side $\sigma^{p_{ij}}(j)$ of polytope $k_{ij}$. 
In particular, the point $x$ of side $j$ of polytope $i$ is identified with the point $(\bar{A})^{p_{ij}}(x)$ 
of side $\sigma^{p_{ij}}(j)$ of polytope $k_{ij}$.  
We specify such a side-pairing of $8P^6$ 
by an $8\times 27$ array whose $ij$th term is $k_{ij}^{p_{ij}}$.  

By a computer search, we found two proper side-pairings of  $8P^6$, 
via the action of the cyclic group generated by $\bar{A}$, that yield orientable 
hyperbolic 6-manifolds of finite volume with $\chi = -1$. 
These two side-pairings are given in Table \ref{table:5}. 
We also found proper side-pairings of $8P^6$, 
via the action of the cyclic group generated by $\bar{A}$, 
for the seven nonorientable manifolds in Table \ref{table:4}. 
These seven side-pairings are listed in Table \ref{table:6}. 

From the $8P^6$ side-pairings in Tables \ref{table:5} and \ref{table:6}, 
we derived simple side-pairings of $Q^6$ by positioning 
the first $P^6$ in standard position given by Table \ref{table:2},  
and then developing the $8P^6$ side-pairing to fill out $Q^6$. 
These simple side-pairings of $Q^6$ yield hyperbolic 6-manifolds 
with a freely acting $\integers/8$ symmetry group, 
with generator represented by the matrix $A$, 
whose quotient is isometric to the corresponding $8P^6$ manifold.  
These $Q^6$ side-pairings are given in Table \ref{table:4}. 

\begin{table}[t]  % 5
\begin{center}
\begin{tabular}{l}
1\hskip10pt {\tt MVStfMSJGgJgWDtD2fV84}\\[5pt]
$2^01^72^2\,4^66^23^1\,5^37^78^6\,4^45^15^3\,8^61^16^2\,2^61^12^6\,3^18^04^4\,8^01^75^1\,7^74^62^2$\\
$1^02^71^2\,3^65^24^1\,6^38^77^6\,3^46^16^3\,7^62^15^2\,1^62^11^6\,4^17^03^4\,7^02^76^1\,8^73^61^2$\\
$4^02^21^7\,3^55^35^7\,6^25^57^1\,7^32^46^2\,7^13^35^3\,8^23^38^2\,5^76^07^3\,6^02^22^4\,5^53^51^7$\\
$3^01^22^7\,4^56^36^7\,5^26^58^1\,8^31^45^2\,8^14^36^3\,7^24^37^2\,6^75^08^3\,5^01^21^4\,6^54^52^7$\\
$6^04^63^5\,1^77^12^6\,8^63^35^3\,8^41^58^6\,5^35^57^1\,3^15^53^1\,2^64^08^4\,4^04^61^5\,3^31^73^5$\\
$5^03^64^5\,2^78^11^6\,7^64^36^3\,7^42^57^6\,6^36^58^1\,4^16^54^1\,1^63^07^4\,3^03^62^5\,4^32^74^5$\\
$8^03^54^6\,2^28^68^2\,7^11^16^2\,3^76^47^1\,6^27^78^6\,5^77^75^7\,8^22^03^7\,2^03^56^4\,1^12^24^6$\\
$7^04^53^6\,1^27^67^2\,8^12^15^2\,4^75^48^1\,5^28^77^6\,6^78^76^7\,7^21^04^7\,1^04^55^4\,2^11^23^6$\\[10pt]
2\hskip10pt {\tt MlStfMSJGgJgWDtD2fn84}\\[5pt]
$2^01^72^2\,4^66^23^1\,5^32^48^6\,4^45^15^3\,8^61^16^2\,2^61^12^6\,3^18^04^4\,8^01^75^1\,7^34^62^2$\\
$1^02^71^2\,3^65^24^1\,6^31^47^6\,3^46^16^3\,7^62^15^2\,1^62^11^6\,4^17^03^4\,7^02^76^1\,8^33^61^2$\\
$4^02^21^7\,3^55^35^7\,6^25^17^1\,7^32^46^2\,7^13^35^3\,8^23^38^2\,5^76^07^3\,6^02^22^4\,4^43^51^7$\\
$3^01^22^7\,4^56^36^7\,5^26^18^1\,8^31^45^2\,8^14^36^3\,7^24^37^2\,6^75^08^3\,5^01^21^4\,3^44^52^7$\\
$6^04^63^5\,1^77^12^6\,8^66^45^3\,8^41^58^6\,5^35^57^1\,3^15^53^1\,2^64^08^4\,4^04^61^5\,3^71^73^5$\\
$5^03^64^5\,2^78^11^6\,7^65^46^3\,7^42^57^6\,6^36^58^1\,4^16^54^1\,1^63^07^4\,3^03^62^5\,4^72^74^5$\\
$8^03^54^6\,2^28^68^2\,7^11^56^2\,3^76^47^1\,6^27^78^6\,5^77^75^7\,8^22^03^7\,2^03^56^4\,8^42^24^6$\\
$7^04^53^6\,1^27^67^2\,8^12^55^2\,4^75^48^1\,5^28^77^6\,6^78^76^7\,7^21^04^7\,1^04^55^4\,7^41^23^6$\\[10pt]
\end{tabular}
\end{center}
\caption{Side-pairings of the two orientable $8P^6$ manifolds.}
\label{table:5}
\end{table}

\begin{table}[htp]  % 6
\begin{center}
\begin{tabular}{l}
3\hskip10pt {\tt k14ONEJdN8ZEdWGYIP1l2}\\[5pt]
$2^01^73^0\,5^06^64^2\,3^32^03^0\,8^67^17^3\,4^71^17^1\,5^06^77^3\,1^14^26^6\,5^73^77^6\,2^04^31^7$\\
$1^02^74^0\,6^05^63^2\,4^31^04^0\,7^68^18^3\,3^72^18^1\,6^05^78^3\,2^13^25^6\,6^74^78^6\,1^03^32^7$\\
$4^04^11^0\,6^52^68^2\,6^24^01^0\,2^54^77^0\,7^16^74^7\,6^51^17^0\,6^78^22^6\,2^18^11^5\,4^05^24^1$\\
$3^03^12^0\,5^51^67^2\,5^23^02^0\,1^53^78^0\,8^15^73^7\,5^52^18^0\,5^77^21^6\,1^17^12^5\,3^06^23^1$\\
$6^07^06^5\,1^08^62^2\,2^16^06^5\,6^66^34^1\,3^64^36^3\,1^05^24^1\,4^32^28^6\,6^24^65^6\,6^01^17^0$\\
$5^08^05^5\,2^07^61^2\,1^15^05^5\,5^65^33^1\,4^63^35^3\,2^06^23^1\,3^31^27^6\,5^23^66^6\,5^02^18^0$\\
$8^05^07^3\,1^74^66^2\,8^28^07^3\,3^71^53^0\,7^67^51^5\,1^71^23^0\,7^56^24^6\,2^28^64^7\,8^07^25^0$\\
$7^06^08^3\,2^73^65^2\,7^27^08^3\,4^72^54^0\,8^68^52^5\,2^72^24^0\,8^55^23^6\,1^27^63^7\,7^08^26^0$\\[10pt]
4\hskip10pt {\tt f65UMFKcN9aEdXHaKU6f3}\\[5pt]
$2^01^73^0\,5^06^63^2\,1^23^14^0\,5^37^17^3\,1^61^18^1\,6^07^28^3\,2^14^25^6\,7^21^65^3\,3^11^22^7$\\
$1^02^74^0\,6^05^64^2\,2^24^13^0\,6^38^18^3\,2^62^17^1\,5^08^27^3\,1^13^26^6\,8^22^66^3\,4^12^21^7$\\
$4^03^11^0\,6^51^67^2\,8^51^72^0\,3^63^78^0\,6^66^74^7\,5^53^27^0\,5^78^22^6\,3^26^63^6\,1^78^54^1$\\
$3^04^12^0\,5^52^68^2\,7^52^71^0\,4^64^77^0\,5^65^73^7\,6^54^28^0\,6^77^21^6\,4^25^64^6\,2^77^53^1$\\
$6^07^06^5\,1^08^62^2\,4^28^35^5\,7^15^34^1\,1^54^36^3\,2^07^53^1\,3^31^27^6\,7^51^57^1\,8^34^28^0$\\
$5^08^05^5\,2^07^61^2\,3^27^36^5\,8^16^33^1\,2^53^35^3\,1^08^54^1\,4^32^28^6\,8^52^58^1\,7^33^27^0$\\
$8^05^08^3\,1^73^66^2\,5^76^57^3\,1^61^54^0\,5^37^52^5\,2^74^33^0\,8^55^24^6\,4^35^31^6\,6^55^76^0$\\
$7^06^07^3\,2^74^65^2\,6^75^58^3\,2^62^53^0\,6^38^51^5\,1^73^34^0\,7^56^23^6\,3^36^32^6\,5^56^75^0$\\[10pt]
5\hskip10pt {\tt l15OMFIcN9YEdXHYIO1l3}\\[5pt]
$2^01^73^0\,5^06^63^2\,3^32^04^0\,8^67^17^3\,3^71^18^1\,6^06^78^3\,2^14^25^6\,6^73^78^6\,2^03^32^7$\\
$1^02^74^0\,6^05^64^2\,4^31^03^0\,7^68^18^3\,4^72^17^1\,5^05^77^3\,1^13^26^6\,5^74^77^6\,1^04^31^7$\\
$4^03^11^0\,6^51^67^2\,5^24^02^0\,1^53^78^0\,7^16^74^7\,5^51^17^0\,5^78^22^6\,1^17^11^5\,4^05^24^1$\\
$3^04^12^0\,5^52^68^2\,6^23^01^0\,2^54^77^0\,8^15^73^7\,6^52^18^0\,6^77^21^6\,2^18^12^5\,3^06^23^1$\\
$6^07^06^5\,1^08^62^2\,2^16^05^5\,6^65^34^1\,3^64^36^3\,2^06^23^1\,3^31^27^6\,6^23^66^6\,6^02^18^0$\\
$5^08^05^5\,2^07^61^2\,1^15^06^5\,5^66^33^1\,4^63^35^3\,1^05^24^1\,4^32^28^6\,5^24^65^6\,5^01^17^0$\\
$8^05^08^3\,1^73^66^2\,8^28^07^3\,3^71^54^0\,8^67^52^5\,2^72^23^0\,8^55^24^6\,2^28^63^7\,8^08^26^0$\\
$7^06^07^3\,2^74^65^2\,7^27^08^3\,4^72^53^0\,7^68^51^5\,1^71^24^0\,7^56^23^6\,1^27^64^7\,7^07^25^0$\\[10pt]
6\hskip10pt {\tt l65OMFIcN9YEdXHYIO6l3}\\[5pt]
$2^01^73^0\,5^06^63^2\,3^33^14^0\,8^67^17^3\,3^71^18^1\,6^06^78^3\,2^14^25^6\,6^73^78^6\,3^13^32^7$\\
$1^02^74^0\,6^05^64^2\,4^34^13^0\,7^68^18^3\,4^72^17^1\,5^05^77^3\,1^13^26^6\,5^74^77^6\,4^14^31^7$\\
$4^03^11^0\,6^51^67^2\,5^21^72^0\,1^53^78^0\,7^16^74^7\,5^51^17^0\,5^78^22^6\,1^17^11^5\,1^75^24^1$\\
$3^04^12^0\,5^52^68^2\,6^22^71^0\,2^54^77^0\,8^15^73^7\,6^52^18^0\,6^77^21^6\,2^18^12^5\,2^76^23^1$\\
$6^07^06^5\,1^08^62^2\,2^18^35^5\,6^65^34^1\,3^64^36^3\,2^06^23^1\,3^31^27^6\,6^23^66^6\,8^32^18^0$\\
$5^08^05^5\,2^07^61^2\,1^17^36^5\,5^66^33^1\,4^63^35^3\,1^05^24^1\,4^32^28^6\,5^24^65^6\,7^31^17^0$\\
$8^05^08^3\,1^73^66^2\,8^26^57^3\,3^71^54^0\,8^67^52^5\,2^72^23^0\,8^55^24^6\,2^28^63^7\,6^58^26^0$\\
$7^06^07^3\,2^74^65^2\,7^25^58^3\,4^72^53^0\,7^68^51^5\,1^71^24^0\,7^56^23^6\,1^27^64^7\,5^57^25^0$\\[10pt]
\end{tabular}
\end{center}
\caption{Side-pairings of the seven non-orientable $8P^6$ manifolds.}
\label{table:6}
\end{table}

\begin{table}[htp]  % 6 cont. 
\begin{center}
\begin{tabular}{l}
7\hskip10pt {\tt lx5OMFIcN9YEdXHYIOyl3}\\[5pt]
$2^01^73^0\,5^06^63^2\,3^33^54^0\,8^67^17^3\,3^71^18^1\,6^06^78^3\,2^14^25^6\,6^73^78^6\,2^43^32^7$\\
$1^02^74^0\,6^05^64^2\,4^34^53^0\,7^68^18^3\,4^72^17^1\,5^05^77^3\,1^13^26^6\,5^74^77^6\,1^44^31^7$\\
$4^03^11^0\,6^51^67^2\,5^24^42^0\,1^53^78^0\,7^16^74^7\,5^51^17^0\,5^78^22^6\,1^17^11^5\,1^35^24^1$\\
$3^04^12^0\,5^52^68^2\,6^23^41^0\,2^54^77^0\,8^15^73^7\,6^52^18^0\,6^77^21^6\,2^18^12^5\,2^36^23^1$\\
$6^07^06^5\,1^08^62^2\,2^16^45^5\,6^65^34^1\,3^64^36^3\,2^06^23^1\,3^31^27^6\,6^23^66^6\,8^72^18^0$\\
$5^08^05^5\,2^07^61^2\,1^15^46^5\,5^66^33^1\,4^63^35^3\,1^05^24^1\,4^32^28^6\,5^24^65^6\,7^71^17^0$\\
$8^05^08^3\,1^73^66^2\,8^26^17^3\,3^71^54^0\,8^67^52^5\,2^72^23^0\,8^55^24^6\,2^28^63^7\,8^48^26^0$\\
$7^06^07^3\,2^74^65^2\,7^25^18^3\,4^72^53^0\,7^68^51^5\,1^71^24^0\,7^56^23^6\,1^27^64^7\,7^47^25^0$\\[10pt]
8\hskip10pt {\tt ly5OMFIcN9YEdXHYIOxl3}\\[5pt]
$2^01^73^0\,5^06^63^2\,3^32^44^0\,8^67^17^3\,3^71^18^1\,6^06^78^3\,2^14^25^6\,6^73^78^6\,3^53^32^7$\\
$1^02^74^0\,6^05^64^2\,4^31^43^0\,7^68^18^3\,4^72^17^1\,5^05^77^3\,1^13^26^6\,5^74^77^6\,4^54^31^7$\\
$4^03^11^0\,6^51^67^2\,5^21^32^0\,1^53^78^0\,7^16^74^7\,5^51^17^0\,5^78^22^6\,1^17^11^5\,4^45^24^1$\\
$3^04^12^0\,5^52^68^2\,6^22^31^0\,2^54^77^0\,8^15^73^7\,6^52^18^0\,6^77^21^6\,2^18^12^5\,3^46^23^1$\\
$6^07^06^5\,1^08^62^2\,2^18^75^5\,6^65^34^1\,3^64^36^3\,2^06^23^1\,3^31^27^6\,6^23^66^6\,6^42^18^0$\\
$5^08^05^5\,2^07^61^2\,1^17^76^5\,5^66^33^1\,4^63^35^3\,1^05^24^1\,4^32^28^6\,5^24^65^6\,5^41^17^0$\\
$8^05^08^3\,1^73^66^2\,8^28^47^3\,3^71^54^0\,8^67^52^5\,2^72^23^0\,8^55^24^6\,2^28^63^7\,6^18^26^0$\\
$7^06^07^3\,2^74^65^2\,7^27^48^3\,4^72^53^0\,7^68^51^5\,1^71^24^0\,7^56^23^6\,1^27^64^7\,5^17^25^0$\\[10pt]
9\hskip10pt {\tt fx5UMFKcN9aEdXHaKUyf3}\\[5pt]
$2^01^73^0\,5^06^63^2\,1^23^54^0\,5^37^17^3\,1^61^18^1\,6^07^28^3\,2^14^25^6\,7^21^65^3\,2^41^22^7$\\
$1^02^74^0\,6^05^64^2\,2^24^53^0\,6^38^18^3\,2^62^17^1\,5^08^27^3\,1^13^26^6\,8^22^66^3\,1^42^21^7$\\
$4^03^11^0\,6^51^67^2\,8^54^42^0\,3^63^78^0\,6^66^74^7\,5^53^27^0\,5^78^22^6\,3^26^63^6\,1^38^54^1$\\
$3^04^12^0\,5^52^68^2\,7^53^41^0\,4^64^77^0\,5^65^73^7\,6^54^28^0\,6^77^21^6\,4^25^64^6\,2^37^53^1$\\
$6^07^06^5\,1^08^62^2\,4^26^45^5\,7^15^34^1\,1^54^36^3\,2^07^53^1\,3^31^27^6\,7^51^57^1\,8^74^28^0$\\
$5^08^05^5\,2^07^61^2\,3^25^46^5\,8^16^33^1\,2^53^35^3\,1^08^54^1\,4^32^28^6\,8^52^58^1\,7^73^27^0$\\
$8^05^08^3\,1^73^66^2\,5^76^17^3\,1^61^54^0\,5^37^52^5\,2^74^33^0\,8^55^24^6\,4^35^31^6\,8^45^76^0$\\
$7^06^07^3\,2^74^65^2\,6^75^18^3\,2^62^53^0\,6^38^51^5\,1^73^34^0\,7^56^23^6\,3^36^32^6\,7^46^75^0$\\[10pt]
\end{tabular}
\end{center}
\begin{center}
{\sc Table 6} (cont.) Side-pairings of the seven non-orientable $8P^6$ manifolds.
\end{center}
\end{table}
\setcounter{table}{6}

\begin{table} % 7
\begin{center}
\begin{tabular}{cccllllll}
$N$&$CN$ & \ \ $H_1$&\ \ $H_2$&\ \ $H_3$&\ \ $H_4$&\ \ $H_5$\\
&&$\phantom{\mathbb Z}$024&$\phantom{\mathbb Z}$024&$\phantom{\mathbb Z}$024&
$\phantom{\mathbb Z}$024&$\phantom{\mathbb Z}$024\\
1&1&   
\phantom{${\mathbb Z}$}040&
\phantom{${\mathbb Z}$}020&
\phantom{${\mathbb Z}$}040&
\phantom{${\mathbb Z}$}000&
\phantom{${\mathbb Z}$}100\\
1&2&   
\phantom{${\mathbb Z}$}111&
\phantom{${\mathbb Z}$}200&
\phantom{${\mathbb Z}$}211&
\phantom{${\mathbb Z}$}100&
\phantom{${\mathbb Z}$}100\\
1&3&   
\phantom{${\mathbb Z}$}130&
\phantom{${\mathbb Z}$}040&
\phantom{${\mathbb Z}$}030&
\phantom{${\mathbb Z}$}100&
\phantom{${\mathbb Z}$}100\\
1&4&  
\phantom{${\mathbb Z}$}220&
\phantom{${\mathbb Z}$}122&
\phantom{${\mathbb Z}$}120&
\phantom{${\mathbb Z}$}200&
\phantom{${\mathbb Z}$}100\\
1&5&  
\phantom{${\mathbb Z}$}130&
\phantom{${\mathbb Z}$}220&
\phantom{${\mathbb Z}$}230&
\phantom{${\mathbb Z}$}100&
\phantom{${\mathbb Z}$}100\\
2&1&  
\phantom{${\mathbb Z}$}040&
\phantom{${\mathbb Z}$}020&
\phantom{${\mathbb Z}$}040&
\phantom{${\mathbb Z}$}000&
\phantom{${\mathbb Z}$}100\\
2&2&   
\phantom{${\mathbb Z}$}012&
\phantom{${\mathbb Z}$}100&
\phantom{${\mathbb Z}$}112&
\phantom{${\mathbb Z}$}000&
\phantom{${\mathbb Z}$}100\\
2&3&  
\phantom{${\mathbb Z}$}220&
\phantom{${\mathbb Z}$}102&
\phantom{${\mathbb Z}$}120&
\phantom{${\mathbb Z}$}200&
\phantom{${\mathbb Z}$}100\\
2&4&  
\phantom{${\mathbb Z}$}130&
\phantom{${\mathbb Z}$}040&
\phantom{${\mathbb Z}$}030&
\phantom{${\mathbb Z}$}100&
\phantom{${\mathbb Z}$}100\\
2&5&  
\phantom{${\mathbb Z}$}111&
\phantom{${\mathbb Z}$}200&
\phantom{${\mathbb Z}$}211&
\phantom{${\mathbb Z}$}100&
\phantom{${\mathbb Z}$}100\\
3&1&  
\phantom{${\mathbb Z}$}121&
\phantom{${\mathbb Z}$}031&
\phantom{${\mathbb Z}$}120&
\phantom{${\mathbb Z}$}110&
\phantom{${\mathbb Z}$}000\\
3&2&   
\phantom{${\mathbb Z}$}230&
\phantom{${\mathbb Z}$}250&
\phantom{${\mathbb Z}$}230&
\phantom{${\mathbb Z}$}110&
\phantom{${\mathbb Z}$}000\\
3&3&  
\phantom{${\mathbb Z}$}031&
\phantom{${\mathbb Z}$}110&
\phantom{${\mathbb Z}$}230&
\phantom{${\mathbb Z}$}010&
\phantom{${\mathbb Z}$}000\\
3&4&  
\phantom{${\mathbb Z}$}220&
\phantom{${\mathbb Z}$}121&
\phantom{${\mathbb Z}$}030&
\phantom{${\mathbb Z}$}010&
\phantom{${\mathbb Z}$}000\\
3&5&  
\phantom{${\mathbb Z}$}130&
\phantom{${\mathbb Z}$}130&
\phantom{${\mathbb Z}$}130&
\phantom{${\mathbb Z}$}010&
\phantom{${\mathbb Z}$}000\\
4&1&  
\phantom{${\mathbb Z}$}130&
\phantom{${\mathbb Z}$}021&
\phantom{${\mathbb Z}$}120&
\phantom{${\mathbb Z}$}110&
\phantom{${\mathbb Z}$}000\\
4&2&   
\phantom{${\mathbb Z}$}220&
\phantom{${\mathbb Z}$}410&
\phantom{${\mathbb Z}$}600&
\phantom{${\mathbb Z}$}310&
\phantom{${\mathbb Z}$}000\\
4&3&  
\phantom{${\mathbb Z}$}230&
\phantom{${\mathbb Z}$}160&
\phantom{${\mathbb Z}$}040&
\phantom{${\mathbb Z}$}010&
\phantom{${\mathbb Z}$}000\\
4&4&  
\phantom{${\mathbb Z}$}130&
\phantom{${\mathbb Z}$}021&
\phantom{${\mathbb Z}$}120&
\phantom{${\mathbb Z}$}110&
\phantom{${\mathbb Z}$}000\\
4&5&  
\phantom{${\mathbb Z}$}220&
\phantom{${\mathbb Z}$}230&
\phantom{${\mathbb Z}$}220&
\phantom{${\mathbb Z}$}110&
\phantom{${\mathbb Z}$}000\\
5&1&  
\phantom{${\mathbb Z}$}121&
\phantom{${\mathbb Z}$}012&
\phantom{${\mathbb Z}$}120&
\phantom{${\mathbb Z}$}110&
\phantom{${\mathbb Z}$}000\\
5&2&   
\phantom{${\mathbb Z}$}230&
\phantom{${\mathbb Z}$}430&
\phantom{${\mathbb Z}$}610&
\phantom{${\mathbb Z}$}310&
\phantom{${\mathbb Z}$}000\\
5&3&  
\phantom{${\mathbb Z}$}130&
\phantom{${\mathbb Z}$}120&
\phantom{${\mathbb Z}$}130&
\phantom{${\mathbb Z}$}010&
\phantom{${\mathbb Z}$}000\\
5&4&  
\phantom{${\mathbb Z}$}220&
\phantom{${\mathbb Z}$}121&
\phantom{${\mathbb Z}$}030&
\phantom{${\mathbb Z}$}010&
\phantom{${\mathbb Z}$}000\\
5&5&  
\phantom{${\mathbb Z}$}130&
\phantom{${\mathbb Z}$}220&
\phantom{${\mathbb Z}$}230&
\phantom{${\mathbb Z}$}100&
\phantom{${\mathbb Z}$}100\\
6&1&  
\phantom{${\mathbb Z}$}121&
\phantom{${\mathbb Z}$}012&
\phantom{${\mathbb Z}$}120&
\phantom{${\mathbb Z}$}110&
\phantom{${\mathbb Z}$}000\\
6&2&   
\phantom{${\mathbb Z}$}220&
\phantom{${\mathbb Z}$}410&
\phantom{${\mathbb Z}$}600&
\phantom{${\mathbb Z}$}310&
\phantom{${\mathbb Z}$}000\\
6&3&  
\phantom{${\mathbb Z}$}320&
\phantom{${\mathbb Z}$}350&
\phantom{${\mathbb Z}$}140&
\phantom{${\mathbb Z}$}010&
\phantom{${\mathbb Z}$}000\\
6&4&  
\phantom{${\mathbb Z}$}220&
\phantom{${\mathbb Z}$}121&
\phantom{${\mathbb Z}$}030&
\phantom{${\mathbb Z}$}010&
\phantom{${\mathbb Z}$}000\\
6&5&  
\phantom{${\mathbb Z}$}130&
\phantom{${\mathbb Z}$}220&
\phantom{${\mathbb Z}$}230&
\phantom{${\mathbb Z}$}100&
\phantom{${\mathbb Z}$}100\\
7&1&  
\phantom{${\mathbb Z}$}121&
\phantom{${\mathbb Z}$}012&
\phantom{${\mathbb Z}$}120&
\phantom{${\mathbb Z}$}110&
\phantom{${\mathbb Z}$}000\\
7&2&   
\phantom{${\mathbb Z}$}022&
\phantom{${\mathbb Z}$}210&
\phantom{${\mathbb Z}$}420&
\phantom{${\mathbb Z}$}110&
\phantom{${\mathbb Z}$}000\\
7&3&  
\phantom{${\mathbb Z}$}310&
\phantom{${\mathbb Z}$}320&
\phantom{${\mathbb Z}$}130&
\phantom{${\mathbb Z}$}010&
\phantom{${\mathbb Z}$}000\\
7&4&  
\phantom{${\mathbb Z}$}220&
\phantom{${\mathbb Z}$}102&
\phantom{${\mathbb Z}$}030&
\phantom{${\mathbb Z}$}010&
\phantom{${\mathbb Z}$}000\\
7&5&  
\phantom{${\mathbb Z}$}012&
\phantom{${\mathbb Z}$}100&
\phantom{${\mathbb Z}$}112&
\phantom{${\mathbb Z}$}000&
\phantom{${\mathbb Z}$}100\\
8&1&  
\phantom{${\mathbb Z}$}121&
\phantom{${\mathbb Z}$}012&
\phantom{${\mathbb Z}$}120&
\phantom{${\mathbb Z}$}110&
\phantom{${\mathbb Z}$}000\\
8&2&   
\phantom{${\mathbb Z}$}022&
\phantom{${\mathbb Z}$}210&
\phantom{${\mathbb Z}$}420&
\phantom{${\mathbb Z}$}110&
\phantom{${\mathbb Z}$}000\\
8&3&  
\phantom{${\mathbb Z}$}140&
\phantom{${\mathbb Z}$}150&
\phantom{${\mathbb Z}$}140&
\phantom{${\mathbb Z}$}010&
\phantom{${\mathbb Z}$}000\\
8&4&  
\phantom{${\mathbb Z}$}220&
\phantom{${\mathbb Z}$}102&
\phantom{${\mathbb Z}$}030&
\phantom{${\mathbb Z}$}010&
\phantom{${\mathbb Z}$}000\\
8&5&  
\phantom{${\mathbb Z}$}012&
\phantom{${\mathbb Z}$}100&
\phantom{${\mathbb Z}$}112&
\phantom{${\mathbb Z}$}000&
\phantom{${\mathbb Z}$}100\\
9&1&  
\phantom{${\mathbb Z}$}130&
\phantom{${\mathbb Z}$}021&
\phantom{${\mathbb Z}$}120&
\phantom{${\mathbb Z}$}110&
\phantom{${\mathbb Z}$}000\\
9&2&   
\phantom{${\mathbb Z}$}022&
\phantom{${\mathbb Z}$}210&
\phantom{${\mathbb Z}$}420&
\phantom{${\mathbb Z}$}110&
\phantom{${\mathbb Z}$}000\\
9&3&  
\phantom{${\mathbb Z}$}220&
\phantom{${\mathbb Z}$}140&
\phantom{${\mathbb Z}$}030&
\phantom{${\mathbb Z}$}010&
\phantom{${\mathbb Z}$}000\\
9&4&  
\phantom{${\mathbb Z}$}130&
\phantom{${\mathbb Z}$}021&
\phantom{${\mathbb Z}$}120&
\phantom{${\mathbb Z}$}110&
\phantom{${\mathbb Z}$}000\\
9&5&  
\phantom{${\mathbb Z}$}111&
\phantom{${\mathbb Z}$}101&
\phantom{${\mathbb Z}$}111&
\phantom{${\mathbb Z}$}010&
\phantom{${\mathbb Z}$}000
\end{tabular}
\end{center}
\caption{The homology groups of the horospherical cross-sections of the cusps 
of the hyperbolic 6-manifolds in Tables \ref{table:5} and \ref{table:6}.}
\label{table:7}
\end{table}

Both of the orientable $8P^6$ manifolds have a one-sided, nonorientable, totally geodesic hypersurface 
that is isometric to the quotient of a nonorientable hyperbolic 5-manifold $N'$ 
by a freely acting $\integers/8$ symmetry group. The hyperbolic 5-manifold $N'$ is 
obtained by gluing together the sides of $Q^5$ by the simple side-pairing  
that is obtained by restricting the simple side-pairing of $Q^6$ to its $x_1=0$ cross-section. 
The side-pairing code for the simple side-pairing of $Q^5$ for $N'$ is {\tt EKB98LLG6R2}. 

\begin{theorem} % 2
There exists at least two isometry classes of orientable, noncompact, arithmetic, 
hyperbolic 6-manifolds of finite volume with $\chi = -1$,  
and there exists at least seven isometry classes of nonorientable, noncompact, 
arithmetic, hyperbolic 6-manifolds of finite volume with $\chi = -1$.  
A hyperbolic 6-manifold of finite volume with $\chi = -1$ has least volume among all hyperbolic 6-manifolds. 
\end{theorem}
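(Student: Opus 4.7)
The proof naturally splits into three parts, corresponding to the three assertions.

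For the existence of the two orientable and seven nonorientable examples, the plan is to show that each array listed in Tables \ref{table:5} and \ref{table:6} encodes a proper side-pairing of $8P^6$ in the sense of \cite[\S 11.1]{Ratcliffe06}. Specifically, I would verify (by finite computation) three facts for each array: (i) the prescription $j\mapsto \sigma^{p_{ij}}(j)$ and $i \mapsto k_{ij}$ is an involution on labelled sides, so that the maps $R_{ij}(\bar A)^{p_{ij}}$ form a genuine side-pairing; (ii) each ridge cycle closes with total dihedral angle $2\pi$ and each edge cycle closes with angle $2\pi$ (since $P^6$ is right-angled, this amounts to checking that every ridge cycle has length four); and (iii) the cusp cross-sections at each ideal vertex are Euclidean $5$-manifolds. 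Once (i)--(iii) are verified, the Poincar\'e polyhedron theorem \cite[Thm.~11.2.2]{Ratcliffe06} produces a complete finite-volume hyperbolic $6$-manifold $M$ tessellated by $8$ copies of $P^6$.

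For the remaining properties, noncompactness is automatic because $P^6$ has $27$ ideal vertices (Table \ref{table:3}), so any gluing produces cusps; the number of cusps and the topology of their horospherical cross-sections can be read off from Table \ref{table:7}. Arithmeticity follows because every generator of the side-pairing is the product of a reflection $R_i$ (integral Lorentzian by \S\ref{section:4}) with a power of $\bar A$ (integral Lorentzian), so the holonomy sits inside the arithmetic group $PO_{6,1}\Z$. The Euler-characteristic count is the cleanest step: using Theorem \ref{thm:1} together with $\chi(\Gamma_2^6)=-1/8$ from \S\ref{section:2}, gluing $8$ copies of a fundamental region for $\Gamma_2^6$ gives $\chi(M)=8\cdot(-1/8)=-1$, and by Gauss-Bonnet $\vol(M)=8\pi^3/15$. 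Orientability for the first two side-pairings versus non-orientability for the last seven will be certified by a direct check of the sign of the determinant of each holonomy matrix (equivalently, via the orientability column in Table \ref{table:4}). The nine manifolds represent distinct isometry classes because their integral homology groups, tabulated in Table \ref{table:4}, are pairwise nonisomorphic.

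For the minimum-volume statement, the Gauss-Bonnet theorem in dimension $6$ gives
\[
\vol(M)=\kappa_6\,\chi(M)=-\tfrac{8\pi^3}{15}\,\chi(M)
\]
for any closed or finite-volume hyperbolic $6$-manifold $M$. Since $\chi(M)$ is a nonzero integer, the smallest possible value of $|\chi(M)|$ is $1$, which forces $\vol(M)\geq 8\pi^3/15$; this bound is realised by any of the manifolds produced above.

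The main obstacle is step (ii) above: the verification that every $k$-face cycle $(1\leq k\leq 5)$ of $8P^6$ closes up under the proposed side-pairing with total dihedral angle $2\pi$. With $8\times 27=216$ sides and many faces to track, this is infeasible by hand and must be carried out by computer; this is the content of the $8P^6$ codes in Tables \ref{table:5}--\ref{table:6}, and it is precisely the check that \S\S\ref{section:7}--\ref{section:8} redo algebraically for the reader who prefers not to trust the machine.
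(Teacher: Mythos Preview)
Your proposal is correct and follows essentially the same approach as the paper's own proof: verify (by computer) that the arrays in Tables~\ref{table:5} and~\ref{table:6} are proper side-pairings, invoke Poincar\'e's theorem and Gauss--Bonnet for the manifold structure and $\chi=-1$, distinguish the nine isometry classes via the homology in Table~\ref{table:4}, and obtain arithmeticity from containment in $\Gamma^6=PO_{6,1}\Z$. The only minor difference is in the orientability checks: the paper certifies orientability of the first two manifolds by exhibiting an explicit odd/even orientation scheme on the eight copies of $P^6$ (compatible because $\bar A$ reverses orientation and the gluing maps reverse induced side orientations), and certifies nonorientability of the last seven by pointing to a nonorientable cusp cross-section in Table~\ref{table:7}, whereas you propose determinant checks on holonomy generators---both routes are valid.
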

\begin{proof}
From our construction of the proper side-pairings of $8P^6$ given in 
Tables \ref{table:5} and \ref{table:6}, 
we obtain nine hyperbolic 6-manifolds of finite volume 
by Theorems 11.1.1, 11.1.4, and 11.1.6 in \cite{Ratcliffe06}. 
From the discussion in \S\ref{section:2}, the volume of $8P^6$ is $8\pi^3/15$, 
and so all these manifolds have $\chi = -1$ by the Gauss-Bonnet Theorem
\cite[Theorem 11.3.4]{Ratcliffe06}.  
A hyperbolic 6-manifold of finite volume with $\chi = -1$ has least volume 
among all hyperbolic 6-manifolds by the Gauss-Bonnet Theorem. 

The first two manifolds in Table \ref{table:4} are orientable.  
This can be seen directly from the $8P^6$ side-pairings in Table \ref{table:5}. 
Orient the eight copies of $P^6$ so that the odd numbered polytopes have 
the standard positive orientation and the even numbered polytopes have 
the standard negative orientation.  
This choice of orientation for the eight copies of $P^6$ is compatible with 
the gluing maps, since all the gluing maps reverse the induced orientations 
on the paired sides, and since $\bar{A}$ acts as an orientation reversing 
symmetry of $P^6$.  

The polytope $P^6$ can be truncated along horospherical cross-sections of its cusps 
in such a way that the cross-sections are invariant under the group of symmetries of $P^6$.  
Then our side-pairings of eight copies of $P^6$ restrict to side-pairings of 
eight copies of the truncated $P^6$ that yield 6-manifolds with boundary which 
are strong deformation retracts of our $8P^6$ hyperbolic 6-manifolds. 
The homology groups in Table \ref{table:4} were computed from the cell structures on the 6-manifolds 
with boundary induced by the side-pairings of the eight copies of the truncated $P^6$. 
From our computation of their homology groups given in Table \ref{table:4}, 
we see that the nine hyperbolic 6-manifolds represent nine distinct isometry classes. 

The homology groups of the horospherical cross-sections of the cusps of the manifolds 
are listed in Table \ref{table:7}. The homology groups are isomorphic to 
$$\Z^a\oplus(\Z/2)^b\oplus(\Z/4)^c$$
for nonnegative integers $a,b,c$ encoded by $abc$ in Table \ref{table:7}. 
The last seven manifolds in Table \ref{table:4} are nonorientable, since each has at least 
one nonorientable cusp cross-section indicated by a zero fifth homology group. 

That the first two manifolds in Table \ref{table:4} are orientable and that the last seven 
are nonorientable can be seen directly from the $Q^6$ 
side-pairing codes in Table \ref{table:4}.  This will be explained in 
\S\S \ref{section:6}-\ref{section:7}. 

In \S\ref{section:7}, we will see that each of our $8P^6$ manifolds is isometric to the orbit space 
$H^6/\Gamma$ of a torsion-free subgroup $\Gamma$ of $\Gamma^6$ of finite index. 
Vinberg proved that $\Gamma^6$ is an arithmetic group in \cite{Vinberg67}. 
Hence, all the hyperbolic 6-manifolds we have constructed in this paper are arithmetic. 
\end{proof}

\section{Side-Pairing Coding} \label{section:6}

Table \ref{table:4} lists simple side-pairings for $Q^6$ in a coded form that we now explain. 
The polytope $Q^6$ is defined to be $\Kappa^6P^6$.
The group $\Kappa^6$ is generated by the reflections in the coordinate hyperplanes, 
so $Q^6$ is obtained by reflecting $P^6$ along the coordinate hyperplanes. 
The first 6 sides of $P^6$ lie on these coordinate hyperplanes. 
Thus the sides of $Q^6$ are obtained by applying $\Kappa^6$ to the last 21 sides of $P^6$. 

The elements of $\Kappa^6$ are the diagonal $7\times 7$ matrices $\diag(\pm 1,\ldots, \pm 1, 1)$. 
Hence the outward Lorentz unit normal vectors of the sides of $Q^6$ are 
obtained from the last 21 vectors in Table \ref{table:2} by multiplying all but the last coordinate by $\pm 1$. 
We list the sides in the corresponding standard order, four for each of sides 7 through 21 of $P^6$ 
and 32 for each of sides 22 through 27.  
The outward Lorentz unit normal vectors for the first 4 sides of $Q^6$ are 
$$(1,1,0,0,0,0,1), (-1,1,0,0,0,0,1), (1,-1,0,0,0,0,1), (-1,-1,0,0,0,0,1).$$
Thus $Q^6$ has $4 \cdot 15 + 32 \cdot 6 = 252$ sides, say $S_1, \ldots, S_{252}$. 
The polytope $Q^6$ has sides of two different types, large sides and small sides.  
The large sides are $S_1,\ldots,S_{60}$, each of which is the union of 16 copies of $P^5$ 
joined together along common sides. 
The small sides are $S_{61},\ldots,S_{252}$, each of which is the union of two copies of $P^5$ 
joined together along a common side of each. 

Let $\mathcal Q$ be one of the side-pairings for $Q^6$ listed in Table \ref{table:4}. 
The side-pairing transformation that maps the side $S_i'$ to the side $S_i$ 
is of the form $s_ik_i$ where $s_i$ is the reflection in the side $S_i$ and $k_i$ is an element of $\Kappa^6$. 

We encode an element $\diag(a_1,\ldots, a_6, 1)$ of $\Kappa^6$, where each $a_i = \pm 1$, 
by 
$$\sum_{i=1}^6\frac{(1-a_i)}{2} 2^{i-1}$$
expressed as a single base 64 digit 
$$0,\ldots, 9, A = 10,\ldots, Z =35, a = 36, \ldots, z = 61, @ = 62, \$ = 63.$$
Table \ref{table:8} gives the correspondence between these digits and the elements of $\Kappa^6$.

The side-pairing $\mathcal Q$ is invariant under the group $\Kappa^6$ of symmetries of $Q^6$. 
Hence large sides pair in groups of four so that 
$$k_{4i-3}=k_{4i-2}=k_{4i-1} = k_{4i}$$
for $i=1,\ldots, 15$,  
and small sides pair in groups of 32 so that 
$$k_{32i+29} = k_{32i+30} = \cdots = k_{32i + 60}$$ 
for $i=1,\ldots, 6$. 
Thus $\mathcal Q$ is specified by the 21 digit base 64 code for the sequence
$$k_1,k_5,k_9,\ldots,k_{57},k_{61},k_{93},k_{125},k_{157},k_{189},k_{221}.$$
\begin{table} % 8
$$
\begin{array}{llllll}
{\tt 0} & \rightarrow & \diag(1,1,1,1,1,1,1) &  {\tt W} & \rightarrow & \diag(1,1,1,1,1,-1,1) \\
{\tt 1} & \rightarrow & \diag(-1,1,1,1,1,1,1) & {\tt X} & \rightarrow &  \diag(-1,1,1,1,1,-1,1) \\
{\tt 2} & \rightarrow & \diag(1,-1,1,1,1,1,1) & {\tt Y} &  \rightarrow & \diag(1,-1,1,1,1,-1,1) \\
{\tt 3} &  \rightarrow & \diag(-1,-1,1,1,1,1,1) & {\tt Z} & \rightarrow & \diag(-1,-1,1,1,1,-1,1) \\
{\tt 4} & \rightarrow & \diag(1,1,-1,1,1,1,1) & {\tt a} & \rightarrow & \diag(1,1,-1,1,1,-1,1) \\
{\tt 5} & \rightarrow & \diag(-1,1,-1,1,1,1,1) & {\tt b} & \rightarrow & \diag(-1,1,-1,1,1,-1,1) \\
{\tt 6} & \rightarrow & \diag(1,-1,-1,1,1,1,1) & {\tt c} & \rightarrow & \diag(1,-1,-1,1,1,-1,1) \\
{\tt 7} & \rightarrow & \diag(-1,-1,-1,1,1,1,1) & {\tt d} & \rightarrow & \diag(-1,-1,-1,1,1,-1,1) \\
{\tt 8} & \rightarrow & \diag(1,1,1,-1,1,1,1) & {\tt e} & \rightarrow & \diag(1,1,1,-1,1,-1,1) \\
{\tt 9} & \rightarrow & \diag(-1,1,1,-1,1,1,1) & {\tt f} & \rightarrow & \diag(-1,1,1,-1,1,-1,1) \\
{\tt A} & \rightarrow & \diag(1,-1,1,-1,1,1,1) & {\tt g} & \rightarrow & \diag(1,-1,1,-1,1,-1,1) \\
{\tt B} & \rightarrow & \diag(-1,-1,1,-1,1,1,1) & {\tt h} & \rightarrow & \diag(-1,-1,1,-1,1,-1,1) \\
{\tt C} & \rightarrow & \diag(1,1,-1,-1,1,1,1) & {\tt i} & \rightarrow & \diag(1,1,-1,-1,1,-1,1) \\
{\tt D} & \rightarrow & \diag(-1,1,-1,-1,1,1,1) & {\tt j} & \rightarrow & \diag(-1,1,-1,-1,1,-1,1) \\
{\tt E} & \rightarrow & \diag(1,-1,-1,-1,1,1,1) & {\tt k} & \rightarrow & \diag(1,-1,-1,-1,1,-1,1) \\
{\tt F} & \rightarrow & \diag(-1,-1,-1,-1,1,1,1) & {\tt l} & \rightarrow & \diag(-1,-1,-1,-1,1,-1,1) \\
{\tt G} & \rightarrow & \diag(1,1,1,1,-1,1,1) & {\tt m} & \rightarrow & \diag(1,1,1,1,-1,-1,1) \\
{\tt H} & \rightarrow & \diag(-1,1,1,1,-1,1,1) & {\tt n} & \rightarrow & \diag(-1,1,1,1,-1,-1,1) \\
{\tt I} & \rightarrow & \diag(1,-1,1,1,-1,1,1) & {\tt o} & \rightarrow & \diag(1,-1,1,1,-1,-1,1) \\
{\tt J} & \rightarrow & \diag(-1,-1,1,1,-1,1,1) & {\tt p} & \rightarrow & \diag(-1,-1,1,1,-1,-1,1) \\
{\tt K} & \rightarrow & \diag(1,1,-1,1,-1,1,1) & {\tt q} & \rightarrow & \diag(1,1,-1,1,-1,-1,1) \\
{\tt L} & \rightarrow & \diag(-1,1,-1,1,-1,1,1) & {\tt r} & \rightarrow & \diag(-1,1,-1,1,-1,-1,1) \\
{\tt M} & \rightarrow & \diag(1,-1,-1,1,-1,1,1) & {\tt s} & \rightarrow & \diag(1,-1,-1,1,-1,-1,1) \\
{\tt N} & \rightarrow & \diag(-1,-1,-1,1,-1,1,1) & {\tt t} & \rightarrow & \diag(-1,-1,-1,1,-1,-1,1) \\
{\tt O} & \rightarrow & \diag(1,1,1,-1,-1,1,1) & {\tt u} & \rightarrow & \diag(1,1,1,-1,-1,-1,1) \\
{\tt P} & \rightarrow & \diag(-1,1,1,-1,-1,1,1) & {\tt v} & \rightarrow & \diag(-1,1,1,-1,-1,-1,1) \\
{\tt Q} & \rightarrow & \diag(1,-1,1,-1,-1,1,1) & {\tt w} & \rightarrow & \diag(1,-1,1,-1,-1,-1,1) \\
{\tt R} & \rightarrow & \diag(-1,-1,1,-1,-1,1,1) & {\tt x} & \rightarrow & \diag(-1,-1,1,-1,-1,-1,1) \\
{\tt S} & \rightarrow & \diag(1,1,-1,-1,-1,1,1) & {\tt y} & \rightarrow & \diag(1,1,-1,-1,-1,-1,1) \\
{\tt T} & \rightarrow & \diag(-1,1,-1,-1,-1,1,1) & {\tt z} & \rightarrow & \diag(-1,1,-1,-1,-1,-1,1) \\
{\tt U} & \rightarrow & \diag(1,-1,-1,-1,-1,1,1) & {\tt @} & \rightarrow & \diag(1,-1,-1,-1,-1,-1,1) \\
{\tt V} & \rightarrow & \diag(-1,-1,-1,-1,-1,1,1) & {\tt \$} & \rightarrow & \diag(-1,-1,-1,-1,-1,-1,1) 
\end{array}
$$
\caption{The decryption of the base 64 digits}
\label{table:8}
\end{table}

A side-pairing transformation $s_ik_i$ preserves orientation if and only if $k_i$ reverses 
orientation.  Therefore the side-pairing transformations of $\mathcal Q$ preserve orientation 
if and only if each digit in its code represents an orientation reversing element of $\Kappa^6$. 
For example, the code for the first $Q^6$ manifold in Table \ref{table:4} is
$$ {\tt MVStfMSJGgJgWDtD2fV84}$$ 
and each digit represents an orientation reversing element of $\Kappa^6$. 
Therefore the hyperbolic 6-manifold $M$ obtained by gluing together the sides of $Q^6$ by 
the above side-pairing is orientable.  
Moreover the corresponding quotient $8P^6$ manifold is also orientable, 
since the matrix $A$ acts as an orientation preserving isometry of $M$, 
since the matrix $A$ preserves orientation.  
Finally, it is worth noting that the hyperbolic 6-manifold $M$ has 27 cusps. 

\section{The Algebraic Interpretation of our Construction} \label{section:7}

The matrix $\bar{A}$ leaves side 1 of $P^6$ invariant, 
and so acts as a symmetry of $P^5$.  
The symmetry group $\Sigma^5$ of $P^5$ 
is a spherical Coxeter group of type $D_5$.  
The matrix $\bar{A}$ corresponds to an element of order 8 in $\Sigma^5$ 
that is conjugate to one, hence all of the Coxeter elements of $\Sigma^5$.  

Let $\widehat{\Gamma}_2^6$ be the subgroup of $\Gamma^6$ generated by $\Gamma_2^6$ and $\bar{A}$, 
and let $C_8 = \langle \bar{A}\rangle$. 
Then we have a split short exact sequence 
$$1\to \Gamma_2^6 \to \widehat\Gamma_2^6 \to C_8\to 1.$$
The group $\Sigma^6$ has a unique conjugacy class of cyclic subgroups of order 8. 
Hence the above extension is unique up to conjugacy in $\Gamma^6$.  

Let ${\mathcal P}$ be one of the $8P^6$ side-pairings in Tables \ref{table:5} or \ref{table:6}. 
Positioning the first $P^6$ in the standard position given by Table \ref{table:2}, 
and then developing ${\mathcal P}$ onto $H^6$ defines an isometry from 
the hyperbolic 6-manifold obtained by gluing together $8P^6$ by ${\mathcal P}$ 
to the orbit space $H^6/\Gamma$ of a discrete torsion-free group $\Gamma$ of isometries of $H^6$. 
From the form of the side-pairing transformations of ${\mathcal P}$, 
we see that $\Gamma$ is a subgroup of $\widehat \Gamma_2^6$ which maps onto $C_8$.  
Hence, we have a short exact sequence 
$$1 \to \Gamma\cap\Gamma_2^6 \to \Gamma \to C_8 \to 1.$$
It is worth noting that the torsion-free subgroup of $\Gamma^6$, with $\chi = -2$, 
constructed in \cite[Theorem 10 (ii)]{E-H} 
also maps onto a $\integers/8$ subgroup of $\Sigma^6$. 

The side-pairing ${\mathcal P}$ was carefully chosen so that $\Gamma$ is generated by 
$\Gamma\cap\Gamma_2^6$ and the matrix $A$. 
The fact that $A$ normalizes $\Gamma\cap\Gamma_2^6$ implies that 
$A$ represents a generator of a freely acting $\integers/8$ group of symmetries of the hyperbolic manifold $H^6/\Gamma\cap\Gamma_2^6$, whose quotient is $H^6/\Gamma$.  
In other words, $H^6/\Gamma\cap\Gamma_2^6$ 
is a regular 8-fold covering of $H^6/\Gamma$ with $A$ representing the generator 
of the $\integers/8$ group of covering transformations of $H^6/\Gamma\cap\Gamma_2^6$. 

The group $\Gamma\cap\Gamma_2^6$ turns out to be a normal subgroup of $\Gamma_2^6$,  
and we have a split short exact sequence
$$1\to \Gamma\cap\Gamma_2^6 \to \Gamma_2^6 \to \Kappa^6 \to 1.$$
Hence, $\Gamma\cap\Gamma_2^6$ is a torsion-free normal subgroup of $\Gamma_2^6$ 
of minimal index 64, and $\Gamma$ is a torsion-free subgroup of $\widehat\Gamma_2^6$ of 
minimal index 64. 

The polytope $Q^6$ is a fundamental polytope for $\Gamma\cap \Gamma_2^6$. 
The $Q^6$ side-pairing code corresponding to $\mathcal{P}$ describes a proper side-pairing 
of the 252 sides of $Q^6$ 
whose side-pairing transformations are in $\Gamma\cap\Gamma_2^6$. 
By Poincar{\'e}'s fundamental polyhedron theorem \cite[Theorem 11.2.2]{Ratcliffe06}, 
the $Q^6$ side-pairing transformations generate $\Gamma\cap \Gamma_2^6$. 
Side-pairing transformations occur in inverse pairs, and so we can choose a set of 126 generators 
for $\Gamma\cap \Gamma_2^6$ among the set of $Q^6$ side-pairing transformations.  
These 126 generators, together with $A$, generate $\Gamma$. 

The matrix $A$ preserves the orientation of $H^6$, since $\det A = 1$.
By inspecting the code for  the $Q^6$ side-pairing for $\Gamma\cap \Gamma_2^6$,  
it is easy to see if the generators of $\Gamma\cap\Gamma_2^6$ preserve orientation. 
Thus it is easy to check  if $\Gamma$ preserves orientation and hence that the
hyperbolic manifold $H^6/\Gamma$ is orientable.

\section{An Algebraic Construction of the first Manifold}\label{section:8}

Let $\mathcal P$ be the first $8P^6$ side-pairing given in Table \ref{table:5}. 
Then $\mathcal P$ determines a torsion-free subgroup $\Gamma$ of $\Gamma^6$ 
with $\chi = -1$; moreover $\Gamma$ is 
generated by the matrix $A$ and the side-pairing transformations of the 
corresponding $Q^6$ side-pairing 
$$\mathcal Q = {\tt MVStfMSJGgJgWDtD2fV84}.$$
The proof that $\Gamma$ is torsion-free relies on the fact that $\mathcal P$ is a proper side-pairing. 
This means, since $P^6$ is right-angled, that the $k$-faces of $8P^6$ are identified in cycles of order $2^{6-k}$ by $\mathcal P$.
Checking these conditions is a labor intensive task, since $8P^6$ has 
576 0-faces (actual vertices), 5,184 1-faces, 8,640 2-faces, 5,760 3-faces, 1,728 4-faces, and 216 5-faces. 
Of course, these conditions were checked by a computer calculation. 

For skeptics, who do not trust computers or computer programmers, 
we give a group theoretical argument that the group $\Gamma$,  
generated by the matrix $A$ and the side-pairing transformations 
of $\mathcal Q$, is a torsion-free subgroup of $\Gamma^6$ with $\chi = -1$. 
Let $\Eta$ be the group generated by the side-pairing transformations of $\mathcal Q$. 

\begin{lemma}\label{lemma:1}  % 8.1
The group $\Eta$ is a normal subgroup of $\Gamma_2^6$.
\end{lemma}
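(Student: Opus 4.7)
The plan is to show that $\Gamma_2^6 = \Kappa^6 \cdot \Eta$ and that $\Kappa^6$ normalizes $\Eta$; combined, these facts force $\Eta$ to be normal in $\Gamma_2^6$, since for any $g = kh$ with $k\in\Kappa^6$ and $h\in\Eta$ one has
$$g\,\Eta\,g^{-1} = k(h\,\Eta\,h^{-1})k^{-1} = k\,\Eta\,k^{-1} = \Eta.$$

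First I would verify the two preliminary ingredients. Each generator $s_jk_j$ of $\Eta$ lies in $\Gamma_2^6$: one has $k_j\in\Kappa^6\subseteq\Gamma_2^6$, and the reflection $s_j$, being the reflection in a side of $Q^6$ that shares its hyperplane with a side of some $\Kappa^6$-translate $kP^6$, equals $kR_\ell k^{-1}$ for some generating reflection $R_\ell$ of $\Gamma_2^6$ and some $k\in\Kappa^6$. For normalization, because $\Kappa^6$ is abelian,
$$k'(s_jk_j)(k')^{-1} = \bigl(k's_j(k')^{-1}\bigr)\,k_j$$
for any $k'\in\Kappa^6$, and the $\Kappa^6$-invariance of the side-pairing $\mathcal{Q}$ recalled in Section~\ref{section:6} ensures the right-hand side is again a side-pairing transformation of $\mathcal{Q}$; hence $k'\Eta(k')^{-1}=\Eta$.

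The crucial step is to show that every generating reflection $R_i$ of $\Gamma_2^6$ lies in $\Kappa^6\cdot\Eta$. For $i\in\{1,\ldots,6\}$ this is immediate since $R_i\in\Kappa^6$. For $i\in\{7,\ldots,27\}$, the non-coordinate side $S_i$ of $P^6$ lies on the boundary of $Q^6$ (reflection across a non-coordinate hyperplane of $P^6$ sends $P^6$ outside $\Kappa^6P^6$), and by convexity of $Q^6$ the intersection of $Q^6$ with the hyperplane $H_i$ spanning $S_i$ is a single codimension-one face, i.e., a side of $Q^6$. Reflections are determined by their fixed hyperplane, so $R_i$ equals the reflection $s_j$ in this side, and the associated side-pairing transformation $s_jk_j = R_ik_j\in\Eta$ gives
$$R_i = (R_ik_j)\,k_j^{-1} \in \Eta\cdot\Kappa^6 = \Kappa^6\cdot\Eta.$$
Since $\Kappa^6\cdot\Eta$ is a subgroup of $\Gamma_2^6$ (using that $\Kappa^6$ normalizes $\Eta$) containing every generator $R_1,\ldots,R_{27}$, it equals $\Gamma_2^6$, completing the plan.

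The main obstacle I anticipate is the geometric claim that $H_i\cap Q^6$ is exactly a single side of $Q^6$. This rests on $Q^6$ being a convex right-angled polytope, as recalled in Section~\ref{section:6}, so that distinct sides lie in distinct hyperplanes; together with the fact that the only coordinate-hyperplane sides of $P^6$ are $S_1,\ldots,S_6$, this ensures $H_i\cap Q^6$ has full codimension one rather than being of lower dimension. Once this is in hand, the argument is purely group-theoretic and requires no further case analysis among the 252 generators of $\Eta$.
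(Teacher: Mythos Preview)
Your proposal is correct and follows essentially the same approach as the paper. Both arguments rest on the same two facts: (i) $\Kappa^6$ normalizes $\Eta$, by the $\Kappa^6$-invariance of the side-pairing $\mathcal{Q}$; and (ii) for $i\geq 7$ the reflection $R_i$ lies in $\Eta\Kappa^6$, since $R_i$ coincides with the reflection $s_j$ in some side of $Q^6$ and $s_jk_j\in\Eta$. The paper uses (i) and (ii) to verify directly that each generator $t$ of $\Gamma_2^6$ satisfies $t\Eta t^{-1}=\Eta$; you use them to establish $\Gamma_2^6=\Kappa^6\Eta$ first and then deduce normality. The difference is purely organizational, and your route has the minor advantage of simultaneously proving $\Gamma_2^6=\Kappa^6\Eta$, which the paper establishes separately in Lemma~\ref{lemma:2}.

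Your anticipated ``obstacle'' is not one: the identification $R_i=s_j$ for $i\geq 7$ is immediate from the description of $Q^6$ in \S\ref{section:6}, where the sides of $Q^6$ are explicitly defined as the $\Kappa^6$-images of sides $7,\ldots,27$ of $P^6$. In particular side $i$ of $P^6$ (the image under the identity of $\Kappa^6$) lies in a side of $Q^6$ with the same supporting hyperplane, so no separate convexity argument is needed.
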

\begin{proof}
Each side-pairing transformation of $\mathcal Q$ is of the form $sk$ 
with $k$ an element of $\Kappa^6$ and $s$ a reflection in a side of $Q^6$. 
Now $s$ is of the form $\ell r \ell$ with $\ell = \ell^{-1}$ an element of $\Kappa^6$ 
and $r$ a reflection in one of the last 21 sides of $P^6$. 
As $\Kappa^6$ is a subgroup of $\Gamma_2^6$ and all the reflections 
in the sides of $P^6$ are in $\Gamma_2^6$ by Theorem \ref{thm:1}, 
we have that $\Eta$ is a subgroup of $\Gamma_2^6$. 

The group $\Gamma_2^6$ is generated by the reflections in the sides of $P^6$ by Theorem \ref{thm:1}. 
Let $sk$ be a side-pairing transformation of $\mathcal Q$, and let $t$ be a reflection in a side of $P^6$. 
%Then $s$ is of the form $\ell r \ell$ with $\ell$ an element of $\Kappa^6$ 
%and $r$ a reflection in one of the last 21 sides of $P^6$. 
%
First assume that $t\in \Kappa^6$. 
Then $tskt = t\ell r\ell kt = t\ell r \ell t k$, which is a side-pairing transformation of $\mathcal Q$, 
since $\mathcal Q$ is symmetric with respect to $\Kappa^6$. 
Therefore $\Kappa^6$ normalizes $\Eta$. 

Now assume that $t$ is a reflection in one of the last 21 sides of $P^6$.  
Then there is an element $m$ of $\Kappa^6$ such that $tm$ is in $\mathcal Q$. 
As $tskt = tm(mskm)mt$ is in $\Eta$, we see that $t$ normalizes $\Eta$. 
Thus $\Eta$ is a normal subgroup of $\Gamma_2^6$. 
\end{proof}

\begin{lemma}\label{lemma:2}  % 8.2
The group $\Kappa^6$ is a set of representatives 
for the cosets of $\Eta$ in $\Gamma_2^6$. 
\end{lemma}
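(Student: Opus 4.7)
The plan is to prove Lemma \ref{lemma:2} by separately establishing two statements: (i) $\Gamma_2^6 = \Kappa^6 \cdot \Eta$, which says every coset of $\Eta$ meets $\Kappa^6$; and (ii) $\Kappa^6 \cap \Eta = \{1\}$, which says distinct elements of $\Kappa^6$ land in distinct cosets. Since $|\Kappa^6| = 2^6 = 64$, together these say exactly that $\Kappa^6$ is a transversal for $\Eta$ in $\Gamma_2^6$.

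For (i) I would use Theorem \ref{thm:1}, which gives the 27 reflections $r_1,\ldots,r_{27}$ in the sides of $P^6$ as Coxeter generators of $\Gamma_2^6$. The first six of these already lie in $\Kappa^6$. For each $j \in \{7,\ldots,27\}$, the $j$th side of $P^6$ is one of the 252 sides of $Q^6 = \Kappa^6 P^6$ (namely, the $\ell = 1$ member of its $\Kappa^6$-orbit), so the associated side-pairing transformation of $\mathcal Q$ has the form $r_j k$ with $k \in \Kappa^6$ and belongs to $\Eta$. Hence $r_j \in \Eta \cdot \Kappa^6 = \Kappa^6 \cdot \Eta$ by the normality of $\Eta$ from Lemma \ref{lemma:1}. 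Since $\Kappa^6 \cdot \Eta$ is a subgroup containing every Coxeter generator, it equals $\Gamma_2^6$.

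The key step is (ii), and the plan for it is to exhibit a homomorphism $\phi: \Gamma_2^6 \to \Kappa^6$ that restricts to the identity on $\Kappa^6$ and kills $\Eta$; once we have $\phi$, any $x \in \Kappa^6 \cap \Eta$ satisfies $x = \phi(x) = 1$. I would define $\phi$ on the Coxeter generators by $\phi(r_i) = r_i$ for $1 \leq i \leq 6$ and $\phi(r_j) = k^{(j)}$ for $7 \leq j \leq 27$, where $k^{(j)} \in \Kappa^6$ is the common $k$-value associated to the entire $\Kappa^6$-orbit of side $j$ of $P^6$ in the code for $\mathcal Q$. The existence of a single such $k^{(j)}$ per orbit is exactly the $\Kappa^6$-invariance of $\mathcal Q$ described in \S\ref{section:6}, which is also why the code for $\mathcal Q$ has only 21 base-64 digits rather than 252. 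Because $\Kappa^6 \cong (\Z/2)^6$ is abelian of exponent two, every defining relation of the right-angled Coxeter group $\Gamma_2^6$ (namely $r_a^2 = 1$ and the commutation relations for adjacent sides) is satisfied automatically in the target, so $\phi$ extends to a well-defined homomorphism.

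Verifying $\phi(\Eta) = 1$ is then a one-line computation. A generator of $\Eta$ is a side-pairing transformation $sk$ where $s = \ell r_j \ell$ is the reflection in a side of $Q^6$ of the form $\ell S_j$ for some $\ell \in \Kappa^6$ and $j \in \{7,\ldots,27\}$ and $k = k^{(j)}$; since $\phi$ lands in the abelian group $\Kappa^6$, one has $\phi(sk) = \phi(\ell)^2 \phi(r_j) \phi(k) = k^{(j)} \cdot k^{(j)} = 1$. The only part of the argument that might look delicate is the well-definedness of $\phi$, but this collapses immediately to the fact that the target is elementary abelian of exponent two. In particular no appeal to Poincar\'e's polyhedron theorem, cycle conditions, or properness of $\mathcal Q$ is required---exactly what is wanted for the computer-free algebraic development of \S\S\ref{section:7}--\ref{section:8}.
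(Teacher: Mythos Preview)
Your proof is correct and follows essentially the same approach as the paper. The paper phrases the injectivity step slightly differently---it computes $(\Gamma_2^6/\Eta)^{ab}$ by killing the images of the side-pairing transformations $r_jk^{(j)}$ in $(\Gamma_2^6)^{ab}\cong(\integers/2)^{27}$, observes that this eliminates the last $21$ generators and leaves an elementary $2$-group of rank $6$ onto which $\Kappa^6$ projects isomorphically---whereas you build the retraction $\phi:\Gamma_2^6\to\Kappa^6$ directly; but these are two packagings of the same computation, since your $\phi$ necessarily factors through $(\Gamma_2^6)^{ab}$ and is precisely the inverse of the paper's isomorphism composed with the quotient map.
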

\begin{proof} 
The abelianization $(\Gamma_2^6)^{ab}$ of $\Gamma_2^6$ is an elementary 2-group with basis 
the images of the reflections in the 27 sides of $P^6$ by Theorem \ref{thm:1}. 
The group $(\Gamma_2^6/\Eta)^{ab}$ is obtained from $(\Gamma_2^6)^{ab}$ by 
killing the images of the side-pairing transformations of $\mathcal Q$. 
The image of a side-pairing transformation $sk = \ell r \ell k$ is 
the image of $rk$, where $r$ is one of the last 21 sides of $P^6$, and $k$ is an element of $\Kappa^6$ 
depending only on $r$. 
The relation $rk = 1$ allows one to eliminate the generator corresponding to $r$, 
and so $(\Gamma_2^6/\Eta)^{ab}$ is an elementary 2-group of rank 6 
with basis the images of the reflections in the first six sides of $P^6$. 
Hence $\Kappa^6$ projects isomorphically onto  $(\Gamma_2^6/\Eta)^{ab}$. 
Thus any two distinct elements of $\Kappa^6$ lie in different cosets of $\Eta$ in $\Gamma_2^6$. 

Each side-pairing transformation in $\mathcal Q$ is of the form $\ell r\ell k$ 
with $k, \ell \in \Kappa^6$ and $r$ the reflection in any one of the last 21 sides of $P^6$. 
The group $\Gamma_2^6$ is generated by the reflections in the sides of $P^6$ by Theorem \ref{thm:1}, 
and so $\Gamma_2^6$ is generated by $\Eta$ and $\Kappa^6$. 
Therefore $\Eta \Kappa^6 = \Gamma$. 
\end{proof}

\begin{lemma}\label{lemma:3}  % 8.3
The matrix $A$ normalizes the group $\Eta$, that is, $A\Eta A^{-1} = \Eta$. 
\end{lemma}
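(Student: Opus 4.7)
The strategy is first to reduce the problem from $A$ to $\ov{A}$, then to use the finite order of $\ov{A}$ to weaken the target equality to a single inclusion, and finally to verify this inclusion by tracking how $\ov{A}$ acts on the generators of $\Eta$ supplied by the side-pairing $\mathcal{Q}$.

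\textbf{Step 1.} Since $\ov{A}$ is the image of $A$ under the projection $\Gamma^6 \to \Sigma^6$ with kernel $\Gamma_2^6$, we may write $A = g\ov{A}$ for some $g \in \Gamma_2^6$. By Lemma \ref{lemma:1}, $\Eta$ is normal in $\Gamma_2^6$, so $g\Eta g^{-1} = \Eta$ and
$$A\Eta A^{-1} \;=\; g\,(\ov{A}\Eta\ov{A}^{-1})\,g^{-1}.$$
Thus it suffices to show $\ov{A}\Eta\ov{A}^{-1} = \Eta$. Because $\ov{A}$ has order $8$ in $\Sigma^5 \subset \Gamma^6$, we have $\ov{A}^8 = I$, so any inclusion $\ov{A}\Eta\ov{A}^{-1} \subseteq \Eta$ iterates to
$$\Eta \;=\; \ov{A}^8\Eta\ov{A}^{-8} \;\subseteq\; \ov{A}^7\Eta\ov{A}^{-7} \;\subseteq\; \cdots \;\subseteq\; \ov{A}\Eta\ov{A}^{-1} \;\subseteq\; \Eta,$$
forcing equality. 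So the task reduces to proving $\ov{A}\eta\ov{A}^{-1} \in \Eta$ for every generator $\eta$ of $\Eta$.

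\textbf{Step 2.} By the proof of Lemma \ref{lemma:1}, each generator of $\Eta$ has the form $\ell R_i \ell k$, with $\ell, k \in \Kappa^6$ and $R_i$ the reflection in the $i$th side of $P^6$ for some $i \in \{7,\ldots,27\}$. The matrix $\ov{A}$ acts as a symmetry of $P^6$, permuting its $27$ sides by the explicit permutation $\sigma$ recorded in \S\ref{section:5}, so $\ov{A} R_i \ov{A}^{-1} = R_{\sigma(i)}$. Hence
$$\ov{A}(\ell R_i \ell k)\ov{A}^{-1} \;=\; (\ov{A}\ell\ov{A}^{-1})\,R_{\sigma(i)}\,(\ov{A}\ell\ov{A}^{-1})\,(\ov{A}k\ov{A}^{-1}),$$
and since $\Gamma_2^6$ is normal in $\Gamma^6$, this conjugate lies in $\Gamma_2^6$.

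\textbf{Step 3.} The hard part is to rewrite the conjugate as a product of $\mathcal{Q}$-generators. The obstacle is that $\sigma$ does not preserve $\{1,\ldots,6\}$, so $\ov{A}$ does not normalize $\Kappa^6$, and the right-hand side is not automatically of the canonical generator form. By Lemma \ref{lemma:2}, however, every element of $\Gamma_2^6$ decomposes uniquely as $\kappa \cdot h$ with $\kappa \in \Kappa^6$ and $h \in \Eta$, so the desired inclusion amounts to showing that the $\Kappa^6$-component of each conjugate is trivial. I would establish this by exploiting the defining feature of the $8P^6$ side-pairing $\mathcal{P}$: it was constructed precisely so that sides are paired via the cyclic action of $\langle\ov{A}\rangle$, which imbues $\mathcal{P}$ with built-in $\ov{A}$-equivariance. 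Translating this equivariance through the developing map from $\mathcal{P}$-generators to $\mathcal{Q}$-generators yields, for each of the 21 choices of $i \in \{7,\ldots,27\}$, an explicit word in $\mathcal{Q}$-generators realising $\ov{A}(\ell R_i \ell k)\ov{A}^{-1}$. The verification is a finite, mostly mechanical check performed most naturally by computer using the side-pairing codes in Table \ref{table:4}, though the underlying structural reason is precisely the $\ov{A}$-symmetry that governs the choice of $\mathcal{P}$.
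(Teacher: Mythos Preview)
Your Steps 1 and 2 are sound: reducing from $A$ to $\ov{A}$ via Lemma~\ref{lemma:1}, and then from equality to a single inclusion via the finite order of $\ov{A}$, is correct and matches the paper's opening move. Step~3, however, is not a proof---it is a description of a computation you have not carried out, and the appeal to the ``$\ov{A}$-equivariance of $\mathcal{P}$'' is too vague to bear weight. The difficulty you flag (that $\ov{A}$ does not normalize $\Kappa^6$, so the conjugate $(\ov{A}\ell\ov{A}^{-1})R_{\sigma(i)}(\ov{A}\ell\ov{A}^{-1})(\ov{A}k\ov{A}^{-1})$ is not manifestly of generator form) is real, and checking membership in $\Eta$ directly inside $\Gamma_2^6$ would require solving a word problem that you do not address.

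The paper sidesteps this with one structural observation you missed: by Lemma~\ref{lemma:2} the quotient $\Gamma_2^6/\Eta$ is abelian, so $\Eta\supseteq[\Gamma_2^6,\Gamma_2^6]$. Since $\Gamma_2^6\lhd\Gamma^6$, its commutator subgroup is $\ov{A}$-invariant, and conjugation by $\ov{A}$ descends to a linear automorphism of the $(\Z/2)$-vector space $(\Gamma_2^6)^{ab}\cong(\Z/2)^{27}$, acting simply as the basis permutation $e_i\mapsto e_{\sigma(i)}$. The image of $\Eta$ there is spanned by the $21$ explicit vectors $v_i=w_i+e_i$ ($i=7,\ldots,27$) read off from the code $\mathcal{Q}$, and showing each $\sigma_\ast(v_i)$ lies in their span is a routine, hand-checkable piece of linear algebra over $\Z/2$ (the paper records the resulting $21\times21$ matrix in Table~\ref{table:9}). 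Linearizing via the abelianization is precisely what converts your unspecified ``mechanical check'' into a transparent finite verification; without it, Step~3 remains a gap.
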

\begin{proof}
Let $R_2 = \diag(1,-1,1,1,1,1,1)$.  Then $A = R_2\bar{A}$. 
The reflection matrix $R_2$ normalizes $\Eta$, since $R_2\in \Gamma_2^6$. 
Hence it suffices to show that $\bar{A}$ normalizes $\Eta$. 
The group $\Gamma_2^6/\Eta$ is abelian by Lemma \ref{lemma:2}, 
and so $\Eta$ contains $[\Gamma_2^6,\Gamma_2^6]$. 
The group $(\Gamma_2^6)^{ab} = \Gamma_2^6/[\Gamma_2^6,\Gamma_2^6]$ 
is a $\integers/2$ vector space with basis the images of the reflection matrices $R_1,\ldots, R_{27}$ 
in the sides of $P^6$ by Theorem \ref{thm:1}. 
To show that $\bar{A}$ normalizes $\Eta$, it suffices to show 
that the subspace $\Eta/[\Gamma_2^6,\Gamma_2^6]$ of $(\Gamma_2^6)^{ab}$ 
is invariant under the linear automorphism induced by conjugation by $\bar{A}$. 

The matrix $\bar{A}$ acts as a symmetry of $P^6$, and so induces a permutation $\sigma$ of $\{1,\ldots, 27\}$ 
such that $\bar{A}$ maps side $i$ of $P^6$ to side $\sigma(i)$ of $P^6$ for each $i=1,\ldots, 27$. 
The permutation $\sigma$ is given in \S\ref{section:5}. 
Hence $\bar{A}R_i\bar{A}^{-1} = R_{\sigma(i)}$ for each $i=1,\ldots, 27$. 
We identify $(\Gamma_2^6)^{ab}$ with $(\integers/2)^{27}$ and the image of $R_i$ with 
the $i$th standard basis vector $e_i$ of $(\integers/2)^{27}$. 
Then the linear automorphism of $(\Gamma_2^6)^{ab}$ induced by conjugation by $\bar{A}$
corresponds to the linear automorphism $\sigma_\ast$ of $(\integers/2)^{27}$ 
defined by $\sigma_\ast(e_i) = e_{\sigma(i)}$ for each $i=1,\ldots, 27$. 

The group $\Eta$ is generated by the side-pairing transformations of $\mathcal Q$. 
The images in $(\integers/2)^{27}$ of the side-pairing transformations of $\mathcal Q$ 
are the 21 vectors $v_i = w_i+e_i$, for $i = 7,\ldots, 27$, where $w_i$ is the base 2 vector representing 
the $(i-6)$th digit of the code for $\mathcal Q$.  
Note that the $i$th coordinate of $v_i$ is the only one of the last 21 coordinates of $v_i$ that is nonzero. 
Hence the vectors $v_7,\ldots, v_{27}$ are linearly independent. 
The first digit of $\mathcal Q$ is ${\tt M}$ 
and from Table \ref{table:8}, 
we deduce that $w_7 = e_2+e_3+e_5$.  Thus $v_7 = e_2+e_3+e_5+ e_7$. 
Now we have 
\begin{eqnarray*}
\sigma_\ast(v_7)  & = & \sigma_\ast( e_2+e_3+e_5+e_7)   \\                           
			   & = & e_{\sigma(2)}+e_{\sigma(3)}+e_{\sigma(5)}+e_{\sigma(7)} \\ 
                                & = & e_{11}+e_5+e_{18}+e_{17} \ \
                                = \ \  e_5+e_{11}+e_{17}+e_{18}.
\end{eqnarray*}
The nonzero last 21 coordinates of $\sigma_\ast(v_7)$ tell you how to write $\sigma_\ast(v_7)$ 
as a sum of the vectors $v_i$. 
Observe that 
$$\sigma_\ast(v_7) = v_{11}+v_{17}+v_{18}$$
since 
$$w_{11} + w_{17}+w_{18} = (e_1+e_4+ e_6) + (e_1+ e_2+e_5) + (e_2+e_4+e_6) = e_5. $$
Likewise $\sigma_\ast(v_i)$ can be written as a sum of the vectors $v_j$ for each $i=7,\ldots, 27$. 
Thus the subspace $V$ of $(\integers/2)^{27}$ spanned by the vectors $v_7, \ldots, v_{27}$ is invariant 
under $\sigma_\ast$. 
Table \ref{table:9} gives the matrix for $\sigma_\ast: V \to V$ with respect to the basis $v_7,\ldots, v_{27}$. 
Therefore $\bar{A}$ and $A$ normalize $\Eta$. 
\end{proof}

\begin{table} % 9
$$\left(
\begin{array}{ccccccccccccccccccccc}
0 & 0 & 0 & 1 & 0 & 0 & 0 & 0 & 0 & 0 & 0 & 0 & 0 & 0 & 0 & 0 & 0 & 0 & 0 & 0 & 0 \\
0 & 0 & 0 & 0 & 0 & 0 & 0 & 0 & 0 & 0 & 0 & 0 & 0 & 0 & 0 & 0 & 0 & 0 & 1 & 0 & 0 \\
0 & 0 & 0 & 0 & 0 & 0 & 0 & 0 & 0 & 0 & 0 & 0 & 0 & 1 & 0 & 0 & 0 & 0 & 0 & 0 & 0 \\
0 & 0 & 0 & 0 & 0 & 0 & 1 & 0 & 0 & 0 & 0 & 0 & 0 & 0 & 0 & 0 & 0 & 0 & 0 & 0 & 0 \\
1 & 1 & 0 & 1 & 0 & 1 & 0 & 1 & 0 & 1 & 1 & 1 & 0 & 0 & 1 & 0 & 1 & 0 & 1 & 0 & 0 \\
0 & 0 & 0 & 0 & 0 & 0 & 0 & 0 & 0 & 0 & 0 & 0 & 0 & 0 & 1 & 0 & 0 & 0 & 0 & 0 & 0 \\
0 & 0 & 0 & 0 & 0 & 0 & 0 & 0 & 0 & 0 & 0 & 0 & 0 & 0 & 0 & 1 & 0 & 0 & 0 & 0 & 0 \\
0 & 0 & 0 & 0 & 0 & 1 & 0 & 0 & 0 & 0 & 0 & 0 & 0 & 0 & 0 & 0 & 0 & 0 & 0 & 0 & 0 \\
0 & 0 & 0 & 0 & 0 & 0 & 0 & 0 & 0 & 0 & 0 & 0 & 0 & 0 & 0 & 0 & 0 & 0 & 0 & 0 & 1 \\
0 & 0 & 0 & 0 & 0 & 0 & 0 & 0 & 1 & 0 & 0 & 0 & 0 & 0 & 0 & 0 & 0 & 0 & 0 & 0 & 0 \\
1 & 0 & 0 & 0 & 0 & 0 & 0 & 0 & 0 & 0 & 0 & 0 & 0 & 0 & 0 & 0 & 0 & 0 & 0 & 0 & 0 \\
1 & 1 & 1 & 1 & 0 & 1 & 1 & 1 & 1 & 0 & 1 & 0 & 0 & 0 & 1 & 0 & 0 & 0 & 1 & 0 & 0 \\
0 & 0 & 0 & 0 & 0 & 0 & 0 & 0 & 0 & 0 & 0 & 1 & 0 & 0 & 0 & 0 & 0 & 0 & 0 & 0 & 0 \\
0 & 1 & 1 & 0 & 1 & 0 & 1 & 0 & 0 & 1 & 0 & 1 & 0 & 1 & 0 & 1 & 0 & 1 & 1 & 1 & 0 \\
0 & 0 & 1 & 0 & 0 & 0 & 0 & 0 & 0 & 0 & 0 & 0 & 0 & 0 & 0 & 0 & 0 & 0 & 0 & 0 & 0 \\
0 & 0 & 0 & 0 & 0 & 0 & 0 & 0 & 0 & 0 & 0 & 0 & 0 & 0 & 0 & 0 & 0 & 0 & 0 & 1 & 0 \\
0 & 0 & 0 & 0 & 0 & 0 & 0 & 0 & 0 & 0 & 1 & 0 & 0 & 0 & 0 & 0 & 0 & 0 & 0 & 0 & 0 \\
0 & 0 & 0 & 0 & 0 & 0 & 0 & 0 & 0 & 0 & 0 & 0 & 0 & 0 & 0 & 0 & 1 & 0 & 0 & 0 & 0 \\
0 & 1 & 0 & 0 & 0 & 0 & 0 & 0 & 0 & 0 & 0 & 0 & 0 & 0 & 0 & 0 & 0 & 0 & 0 & 0 & 0 \\
0 & 0 & 0 & 0 & 0 & 0 & 0 & 0 & 0 & 0 & 0 & 0 & 0 & 0 & 0 & 0 & 0 & 1 & 0 & 0 & 0 \\
0 & 0 & 0 & 0 & 0 & 0 & 0 & 0 & 0 & 0 & 0 & 0 & 1 & 0 & 0 & 0 & 0 & 0 & 0 & 0 & 0
\end{array}
\right)$$
\caption{The matrix for $\sigma_\ast: V \to V$ with respect to the basis $v_7,\ldots, v_{27}$}
\label{table:9}
\end{table}

\begin{lemma} \label{lemma:4}
The group $\Eta$ is torsion-free. 
\end{lemma}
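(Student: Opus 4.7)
The plan is to combine Corollary 2.2 with Lemmas 8.1--8.3 to reduce the torsion-freeness of $\Eta$ to a finite linear-algebra check over $\Z/2$. Suppose $x \in \Eta$ is nontrivial and of finite order. By Corollary 2.2 we may write $x = g y g^{-1}$ with $g \in \Gamma^6$ and $y$ a nontrivial element of $\Kappa^6 \cup \Lambda^6$. The splitting $\Gamma^6 = \Gamma_2^6 \rtimes \Sigma^6$ from sequence (2.2) lets us write $g = h\sigma$ with $h \in \Gamma_2^6$, $\sigma \in \Sigma^6$, and normality of $\Eta$ in $\Gamma_2^6$ (Lemma 8.1) gives $\sigma y \sigma^{-1} = h^{-1} x h \in \Eta$. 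Because $\Kappa^6$ stabilizes $e_7$, $\Lambda^6$ pointwise stabilizes $\ell_6$, and $\Sigma^6$ acts transitively both on the 72 actual vertices and on the 216 line edges of $P^6$ (proof of Corollary 2.2), as $\sigma$ varies the conjugate $\sigma \Kappa^6 \sigma^{-1}$ (resp.\ $\sigma \Lambda^6 \sigma^{-1}$) runs through the stabilizer in $\Gamma_2^6$ of every actual vertex of $P^6$ (resp.\ through the pointwise stabilizer of every line edge). Hence it suffices to show that, for every actual vertex and every line edge of $P^6$, the corresponding (pointwise) stabilizer in $\Gamma_2^6$ intersects $\Eta$ only in the identity.

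Next I would recast this as a linear-algebra statement. Since $P^6$ is right-angled, the stabilizer in $\Gamma_2^6$ of an actual vertex $v$ is the elementary 2-group generated by the six reflections in the sides of $P^6$ through $v$, and the pointwise stabilizer of a line edge $\ell$ is generated by the five reflections in the sides of $P^6$ containing $\ell$. By Lemma 8.2, the projection $\pi\colon \Gamma_2^6 \to \Gamma_2^6/\Eta \cong \Kappa^6 \cong (\Z/2)^6$ satisfies $\pi(R_i) = e_i$ for $1 \le i \le 6$ and $\pi(R_i) = w_{i-6}$ for $7 \le i \le 27$, where the vectors $w_1, \ldots, w_{21} \in (\Z/2)^6$ are read off from the digits of the code $\mathcal{Q}$ via Table 8. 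The intersection of each stabilizer with $\Eta$ equals the kernel of $\pi$ restricted to that stabilizer, and this kernel is trivial precisely when the corresponding tuple of $\pi(R_i)$'s is linearly independent in $(\Z/2)^6$---a $6 \times 6$ invertibility check at each actual vertex and a rank-$5$ check at each line edge.

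To execute the verification I would enumerate, for each vertex $v_j$ in Table 3, the six sides through $v_j$ (using the incidence relation $\langle u_i, v_j \rangle = 0$ with Table 2), and similarly list the five sides containing each line edge of $P^6$; each case then becomes an explicit rank computation over $\Z/2$. The number of cases can be cut down using Lemma 8.3: because $\bar A$ normalizes $\Eta$, the rank condition is constant on $\langle \bar A \rangle$-orbits, so it suffices to check one representative in each $\langle \bar A \rangle$-orbit on the sets of actual vertices and of line edges. The main obstacle is simply the finite but substantial bookkeeping of enumerating these orbits and compiling the incidence data; each individual $\Z/2$ rank check is elementary. Once every rank condition is confirmed, $\Eta$ contains no nontrivial torsion and is therefore torsion-free.
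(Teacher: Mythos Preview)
Your proposal is correct and follows essentially the same approach as the paper: reduce via Corollary~2.2 and Lemma~8.1 to showing that every vertex stabilizer and line-edge stabilizer in $\Gamma_2^6$ meets $\Eta$ trivially, translate this via the quotient map $\pi:\Gamma_2^6\to\Gamma_2^6/\Eta\cong(\Z/2)^6$ into linear-independence conditions on the columns of a $6\times 27$ matrix (the paper's matrix $C$), and then use the $\bar A$-symmetry from Lemma~8.3 to cut the $72+216=288$ checks down to $36$ orbit representatives. The paper carries out exactly this, listing the $36$ index sets explicitly in Table~11 and verifying the independence (by computer, but checkable by hand); your outline matches it step for step.
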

\begin{proof}
Let $C$ be the $6\times 27$ matrix over $\integers/2$ whose $j$th column 
vector represents the image in $\Gamma_2^6/\Eta$ of the reflection $r_j$ in the $j$th side of $P^6$. 
The first six column vectors of $C$ are the standard basis vectors in standard order. 
For $j > 6$, the relation $r_jk_j=1$ implies that the $j$th column vector of $C$ is the base 2 representation of $k_j$
which is represented by the $(j-6)$th digit of the code for $\mathcal Q$. 
The matrix $C$ is given in Table \ref{table:10}. 
 
 By Corollary \ref{corollary:1}, all we have to do to prove that $\Eta$ is torsion-free is to verify 
 that each set of six column vectors of $C$ corresponding to six sides of $P^6$ 
 that intersect in an actual vertex of $P^6$ are linearly independent  
 and that each set of five column vectors of $C$ that correspond to five sides of $P^6$ 
 intersecting in a line edge of $P^6$ are independent. 
 The polytope $P^6$ has 72 actual vertices and 216 line edges,  
  and so there are 288 conditions to check. 
 
 The matrix $\bar{A}$ acts as a symmetry of $P^6$,  
 and $\bar{A}$ normalizes $\Eta$ by the proof of Lemma \ref{lemma:3}.
 Thus, the 288 conditions are symmetric with respect 
 to the action of $\bar{A}$. 
 The matrix $\bar{A}$ acts freely on the sets of actual vertices and line edges of $P^6$, 
 and so we only have to check $288/8 = 36$ conditions,  
 the conditions for the 9 actual vertices with indices 1, 2, 17, 18, 19, 20, 21, 57, 58 in Table \ref{table:3},  
 and the 27 line edges whose ideal endpoints have indices 
 $$
 \begin{array}{l}
 \{73,74\},\{73,75\},\{73,76\},\{73,78\},\{73,79\},\{73,83\}, \{73,84\},\{73,85\},\{73,86\}, \\ 
 \{73,88\},\{73,89\},\{73,91\},\{73,94\},\{76,83\},\{76,85\},\{83,84\},\{83,85\},\{83,86\}, \\
 \{83,87\},\{83,88\},\{83,89\},\{83,92\},\{83,93\},\{83,99\},\{84,85\},\{84,88\},\{84,99\}. 
 \end{array}
 $$
Table \ref{table:11} lists the 36 sets of column indices of  the matrix $C$ 
corresponding to the sides of $P^6$ that intersect in the above 9 actual vertices and 27 line edges of $P^6$.    
We checked that these sets of columns of $C$ are linearly independent  by a computer calculation, 
but these conditions can be checked by hand by a die-hard skeptic. 
 
 For example, the vectors $u_i = -e_i$, for $i=1,\ldots,6$, of Table \ref{table:2} are Lorentz orthogonal to $v_1=e_7$, 
 and so the sides $S_1,\dots, S_6$ of $P^6$ intersect in the actual vertex $v_1$ of $P^6$. 
 The first six column vectors of $C$ are $e_1,\ldots, e_6$, and so they are linearly independent. 
 The vectors $u_i$, for $i=1,\ldots, 4$, and $u_{21} = e_5+e_6+e_7$ 
 of Table \ref{table:2} are Lorentz orthogonal to the ideal endpoints $e_6+e_7$ and $e_5+e_7$ of the line edge $(v_{73}, v_{74})$ of $P^6$, 
 and so the sides $S_1,\ldots, S_4, S_{21}$ of $P^6$ intersect in the line edge $(v_{73}, v_{74})$. 
 The corresponding column vectors, $e_1,\ldots, e_4$, and $e_1+e_2+e_3+e_5+e_6$ of $C$ 
 are obviously linearly independent. 
\end{proof}

\begin{table}  % 10
$$
\begin{array}{cccccccccccccccccccccccc}
\!\!  &  &  &{\tt M}&{\tt V}&{\tt S}&{\tt t}&{\tt f}&{\tt M}&{\tt S}&{\tt J}&{\tt G}&{\tt g}&{\tt J}&{\tt g}&{\tt W}&{\tt D}&{\tt t}&{\tt D}&{\tt 2}&{\tt f}&{\tt V}&{\tt 8}&{\tt 4} \\
\!\!1& &0 & 0      & 1      &  0     &  1    &  1   &  0     &  0      &  1    &  0     &  0     &  1     &  0     &  0      &  1     &  1   &  1      &  0    &  1    &  1    &   0     &  0    \\
\!\!0& &0 &1       & 1      &  0     &  1    &  0   &  1     &  0      &  1    &  0     &  1     &  1     &  1     &  0      &  0     &  1   &  0      &  1    &  0    &  1    &   0     &  0    \\
\!\!0& &0 & 1      & 1      & 1      &  1    &  0   &  1     &  1    &  0     &  0    &  0     &  0     &  0     &  0      &  1     &   1  &  1      &  0    &  0    &  1    &   0   &  1\\
\!\!0& \!\cdots\!&0 & 0     &1       & 1      &  0    &  1   &  0     &  1    &  0     &  0    &  1     &  0     &  1     &  0      &  1    &    0  &  1      &  0   &   1   &   1    &   1   &  0 \\
\!\!0&  &0 & 1      &1       & 1      &  1    &  0   &  1     &  1    &  1     &  1    &  0     &  1     &  0     &  0      &  0    &    1  &  0      &  0   &   0   &   1    &   0   &  0 \\
\!\!0&  &1 & 0      & 0      & 0      &  1    &  1   &  0     &  0    &  0     &  0    &  1     &  0     &  1     &  1      &  0    &    1  &  0      &  0   &   1   &   0    &   0   &  0 
\end{array}
$$
\caption{The $6\times 27$ matrix $C$ whose first six column vectors are $e_1,\ldots, e_6$.}
\label{table:10}
\end{table}

\begin{table} % 11
$$
\begin{array}{llll}
\{1,2,3,4,5,6\}         & \{1,2,3,4,21\}  & \{3,4,17,18,21\}  & \{2,6,8,10,13\}    \\
\{1,2,3,16,20,21\}  & \{1,2,3,5,20\}  & \{3,5,17,18,20\}  & \{3,4,7,13,17\}   \\
\{2,3,4,13,17,21\}  & \{1,2,4,5,19\}  & \{4,5,17,18,19\}  & \{3,5,7,10,17\}    \\
\{2,3,5,10,17,20\}  & \{1,3,4,5,18\}   & \{17,18,19,20,21\}  & \{4,6,7,8,13\}    \\
\{2,3,6,10,13,16\}  & \{1,3,18,20,21\}  & \{2,4,5,6,8\}  &  \{5,6,7,8,10\}   \\
\{2,4,5,8,17,19\}    & \{2,3,4,5,17\}  &   \{2,4,8,15,19\}  & \{7,8,10,13,17\}\\
\{2,4,6,8,13,15\}    & \{2,3,17,20,21\}  & \{2,3,10,13,17\}  & \{2,13,17,21,26\}    \\
\{2,8,10,13,17,26\}& \{2,4,17,19,21\} & \{2,4,8,13,17\}  & \{3,13,17,21,25\}  \\
\{3,7,10,13,17,25\}& \{2,5,17,19,20\}  & \{2,5,8,10,17\} &  \{10,13,17,25,26\} 
\end{array}
$$
\caption{A list of 36 sets of column indices of the matrix $C$ for which being linearly independent 
 implies that $\Eta$ is torsion-free.}
\label{table:11}
 \end{table}    

\begin{remark}
We first proved that $\Eta$ is torsion-free by showing 
that $\mathcal Q$ is a proper $Q^6$ side-pairing.  
This means, since $Q^6$ is right-angled, that the $k$-faces of $Q^6$ are identified in cycles of order $2^{6-k}$ by $\mathcal Q$.
Checking these conditions is a herculean task (if you are not a computer) since $Q^6$ has 
1344 0-faces (actual vertices), 14,208 1-faces, 23,040 2-faces, 13,920 3-faces, 3,360 4-faces, and 252 5-faces. 
After considering what it takes to prove that $\Eta$ is torsion-free geometrically, 
the task of verifying the 36 conditions in the proof of Lemma \ref{lemma:4} seems quite reasonable.    
\end{remark}

Let $\Gamma$ be the subgroup of $\Gamma^6$ generated by the matrix $A$ and the group $\Eta$. 
Then $\Eta$ is a normal subgroup of $\Gamma$ by Lemma \ref{lemma:3}. 

\begin{lemma}\label{lemma:5}
The quotient group $\Gamma/\Eta$ is cyclic of order 8 generated by $\Eta A$. 
\end{lemma}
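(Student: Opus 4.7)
The plan is to verify both that $\Eta A$ generates the quotient (immediate from $\Gamma = \langle A,\Eta\rangle$) and that its order is exactly $8$, the latter via matching upper and lower bounds.

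For the lower bound, compose the inclusion $\Gamma\hookrightarrow\Gamma^6$ with the splitting projection $\pi:\Gamma^6\twoheadrightarrow\Sigma^6$ from (\ref{eq:2}). Lemma \ref{lemma:1} gives $\Eta\subseteq\Gamma_2^6=\ker\pi$, so this composite factors through $\Gamma/\Eta$ and sends $\Eta A$ to $\bar A$. Since $\bar A$ has order $8$ (as noted at the start of \S\ref{section:7}), the order of $\Eta A$ in $\Gamma/\Eta$ is at least $8$.

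For the upper bound, it suffices to show $A^8\in\Eta$. Using the identity $A=R_2\bar A$ from the proof of Lemma \ref{lemma:3}, together with the conjugation rule $\bar A R_i\bar A^{-1}=R_{\sigma(i)}$ (where $\sigma$ is the permutation of sides of $P^6$ induced by $\bar A$, listed explicitly in \S\ref{section:5}) and $\bar A^8=I$, an inductive expansion yields
\[
A^8=(R_2\bar A)^8=\prod_{k=0}^{7}R_{\sigma^k(2)}=R_2R_{11}R_4R_{20}R_9R_{21}R_{12}R_{14},
\]
the indices running over the $\sigma$-orbit $\{2,11,4,20,9,21,12,14\}$ of $2$. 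By Lemma \ref{lemma:2}, $A^8\in\Eta$ if and only if its image in $\Gamma_2^6/\Eta\cong\Kappa^6$ is trivial. Under this identification, $R_i$ for $i\leq 6$ maps to the $i$th coordinate reflection in $\Kappa^6$, while $R_j$ for $j\geq 7$ maps to the element of $\Kappa^6$ encoded by the $(j-6)$th digit of $\mathcal{Q}$. Decoding the relevant digits {\tt S}, {\tt f}, {\tt M}, {\tt J}, {\tt D}, {\tt t} via Table \ref{table:8} and summing the six resulting base-$2$ vectors together with $e_2$ and $e_4$ yields $0$ in $(\mathbb Z/2)^6$, completing the verification.

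The main obstacle is the final $(\mathbb Z/2)^6$-arithmetic check. The algebraic framework reduces the problem cleanly to a finite calculation, but that calculation depends on the specific digits of the side-pairing code $\mathcal{Q}$ and is where the particular structure of the chosen side-pairing enters; everything else is formal manipulation with the exact sequence (\ref{eq:2}) and Lemmas \ref{lemma:1}--\ref{lemma:3}.
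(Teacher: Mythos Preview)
Your argument is correct and shares its skeleton with the paper's proof: both use the projection $\Gamma^6\to\Sigma^6$ from the split exact sequence~(\ref{eq:2}), under which $\Eta\subseteq\Gamma_2^6$ lies in the kernel and $A\mapsto\bar A$ has order $8$. The paper stops there, simply asserting that the kernel of $\Gamma\to\langle\bar A\rangle$ equals $\Eta$; you go further and justify this by expanding $A^8=R_2R_{11}R_4R_{20}R_9R_{21}R_{12}R_{14}$ and checking that its image vanishes in $\Gamma_2^6/\Eta\cong\Kappa^6$, so your proof is in fact more complete than the paper's on this point.
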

\begin{proof}
We have a short exact sequence 
$$1 \to \Gamma_2^6 \to \Gamma^6 \to \Sigma^6 \to 1$$
in which $A$ in $\Gamma^6$ is mapped to $\bar{A}$ in $\Sigma^6$. 
As $\Eta$ is a subgroup of $\Gamma_2^6$, the subgroup $\Gamma$ of $\Gamma^6$ 
is mapped onto the cyclic group $\langle \bar{A}\rangle$ of order 8 
with kernel $\Eta$. 
\end{proof}

\begin{lemma}\label{lemma:6}
The group $\Gamma$ is torsion-free. 
\end{lemma}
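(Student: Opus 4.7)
The plan is to reduce Lemma~\ref{lemma:6} to Lemma~\ref{lemma:4} by exploiting the geometric fact that torsion elements of a discrete isometry group must fix a point. Suppose for contradiction that $g\in\Gamma$ has finite order $n>1$. Since $\Gamma\subset\Gamma^6$ is a discrete subgroup of $\isom H^6$, the element $g$ is elliptic and hence fixes some point $x\in H^6$. Let $d$ be the order of the image $\bar g\in\Gamma/\Eta\cong\mathbb{Z}/8$ (Lemma~\ref{lemma:5}); then $d\mid\gcd(n,8)$, and $g^d\in\Eta$ fixes $x$. By Lemma~\ref{lemma:4} applied to $g^d$, we get $g^d=1$, and therefore $n=d\in\{2,4,8\}$. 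This reduces the problem to showing that no element $g\in\Gamma$ projects isomorphically onto a nontrivial cyclic subgroup of $\mathbb{Z}/8$ while being elliptic.

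Next, I would write any such $g$ in the form $g=\eta A^k$ with $\eta\in\Eta$ and $\gcd(k,8)=8/n$. Using that $A$ normalizes $\Eta$ (Lemma~\ref{lemma:3}), the condition $g^n=1$ unfolds to the identity
\[ \eta\cdot(A^k\eta A^{-k})\cdot(A^{2k}\eta A^{-2k})\cdots(A^{(n-1)k}\eta A^{-(n-1)k})\,A^{nk}=1, \]
which lies in $\Eta$ since $nk\equiv0\pmod8$ and $A^8\in\Eta$. The geometric meaning of this is that the isometry $\bar A^k$ of the 6-manifold $N:=H^6/\Eta$ has a fixed point (the image of $x$), so the $\mathbb{Z}/8$-action on $N$ generated by $\bar A$ is not free.

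To rule this out, I would argue that $\bar A$ acts freely on $N$. Since $N$ is a complete hyperbolic manifold of finite volume and the action preserves the natural decomposition of $N$ into the images of translates of the fundamental polytope $P^6$, a $\bar A^k$-fixed point of $N$ would lift to a point $y\in H^6$ with $A^k y\in\Eta\, y$. The plan is to analyze the fixed-point locus $\mathrm{Fix}(A^k)\subseteq H^6$ by an eigenvalue/eigenvector computation on the integer matrix $A^k$, and then check, using the description of $\Eta$ as the group generated by the side-pairing transformations of $\mathcal{Q}$ together with the $Q^6$ fundamental domain, that no $\Eta$-orbit of $\mathrm{Fix}(A^k)$ intersects itself nontrivially. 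The $\mathbb{Z}/8$-equivariance of the whole set-up should reduce the needed verifications to a small number of representative cases, in the spirit of the 36-case check used in the proof of Lemma~\ref{lemma:4}.

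The main obstacle is this last step: converting the abstract algebraic identity $g^n=1$ into an explicit geometric impossibility without reverting to the computer-assisted verification of the proper side-pairing condition for $\mathcal{P}$ on $8P^6$. I expect the cleanest route is to use the $\mathbb{Z}/8$-action on the abelianization $\Eta^{ab}$ induced by conjugation by $A$ (whose matrix with respect to the generating side-pairings of $\mathcal{Q}$ is computable in the style of Table~\ref{table:9}), and to show that the required relation forces $\eta$ to lie in a subspace which, combined with Lemma~\ref{lemma:4} and Corollary~\ref{corollary:1}, is incompatible with $g$ being elliptic of 2-power order.
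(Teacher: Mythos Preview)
Your reduction to $2$-power torsion is correct, but you are missing an immediate further simplification: if $\Gamma$ has any nontrivial torsion of $2$-power order, it has an element of order exactly $2$, and such an element must map to the unique involution $\Eta A^4$ in $\Gamma/\Eta\cong\integers/8$. So you only need to rule out $g=hA^4$ with $h\in\Eta$ and $g^2=1$, not three separate cases $n\in\{2,4,8\}$.

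Your geometric plan has a genuine gap. The condition ``$\bar A^k$ fixes a point of $N$'' lifts to ``there exists $\eta\in\Eta$ with $\eta^{-1}A^k$ fixing some $y\in H^6$'', not to ``$A^k$ itself fixes $y$''. Analyzing $\mathrm{Fix}(A^k)$ alone therefore misses almost all cases, and the phrase ``no $\Eta$-orbit of $\mathrm{Fix}(A^k)$ intersects itself nontrivially'' does not capture the right condition either. Making this approach work would require controlling $\mathrm{Fix}(\eta^{-1}A^k)$ for infinitely many $\eta$, which is no easier than the original problem.

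The paper's argument is exactly the abelianization route you gesture at in your last paragraph, but carried out in $(\Gamma_2^6)^{ab}\cong(\integers/2)^{27}$ rather than $\Eta^{ab}$, and with a concrete trick you are missing. Write $A=R_2\bar A$; then using $\bar A R_i\bar A^{-1}=R_{\sigma(i)}$ one computes
\[
A^4=R_2R_{\sigma(2)}R_{\sigma^2(2)}R_{\sigma^3(2)}\,\bar A^4=R_2R_{11}R_4R_{20}\,\bar A^4.
\]
Expanding $(hA^4)^2=1$ and conjugating the second block by $\bar A^4$ gives a relation entirely in $\Gamma_2^6$. Passing to $(\integers/2)^{27}$ with basis $e_i\leftrightarrow R_i$, and letting $v\in V$ be the image of $h$ in the $21$-dimensional subspace $V=\mathrm{span}(v_7,\ldots,v_{27})$ (the image of $\Eta$), one obtains the single linear equation
\[
(I+\sigma_\ast^4)(v)=v_9+v_{11}+v_{12}+v_{14}+v_{20}+v_{21},
\]
and a direct check of the $21\times 21$ matrix $I+\sigma_\ast^4$ (the paper's Table~\ref{table:12}) shows this has no solution in $V$. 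No geometry of fixed points is needed.
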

\begin{proof}
By Lemmas \ref{lemma:4} and \ref{lemma:5}, the group $\Gamma$ can have only 2-torsion. 
On the contrary, suppose that $\Gamma$ has 2-torsion and let $g$
be an element in $\Gamma$ of order 2. 
By Lemma \ref{lemma:5}, there is an element $h$ of $\Eta$ such that $g = hA^4$. 
Observe that 
\begin{eqnarray*}
A^4 & = & \big(R_2\bar{A}\big)^4 = \big(R_2\bar{A}\big)\big(R_2\bar{A}\big)\big(R_2\bar{A}\big)\big(R_2\bar{A}\big)\\
        & = & R_2\big(\bar{A} R_2 (\bar{A})^{-1}\big)\big((\bar{A})^2R_2(\bar{A})^{-2}\big)\big((\bar{A})^3
        R_2(\bar{A})^{-3}\big)(\bar{A})^4\\
        & = & R_2R_{\sigma(2)}R_{\sigma^2(2)}R_{\sigma^3(2)}(\bar{A})^4
=R_2R_{11}R_4R_{20}(\bar{A})^4.\\
\end{eqnarray*}
Hence we have
\begin{eqnarray*}
1 & = & g^2 = (hA^4)(hA^4)\\
  & = & (hR_2R_{11}R_4R_{20}(\bar{A})^4)(hR_2R_{11}R_4R_{20}(\bar{A})^4) \\
   & = & h(R_2R_{11}R_4R_{20})\big((\bar{A})^4h(\bar{A})^{-4}\big)
        \big((\bar{A})^4R_2R_{11}R_4R_{20}(\bar{A})^{-4}\big) \\
   & = & h(R_2R_{11}R_4R_{20})\big((\bar{A})^4h(\bar{A})^{-4}\big)
        (R_{\sigma^4(2)}R_{\sigma^4(11)}R_{\sigma^4(4)}R_{\sigma^4(20)})\\
   & = & h(R_2R_{11}R_4R_{20})\big((\bar{A})^4h(\bar{A})^{-4}\big)
        (R_9R_{21}R_{12}R_{14}).
\end{eqnarray*}
The elements $h, R_2, R_{11}, R_4, R_{20}, (\bar{A})^4h(\bar{A})^{-4}, R_9, R_{21}, R_{12}, R_{14}$ 
all lie in $\Gamma_2^6$. 
Let $v$ be the image of $h$ in the subspace $V$ of $(\integers/2)^{27}$ 
under the isomorphism from $(\Gamma_2^6)^{ab}$ to $(\integers/2)^{27}$ 
that maps $R_i$ to $e_i$ for each $i=1,\ldots, 27$. 
Then by abelianizing and passing to $(\integers/2)^{27}$, we have 
$$v+ \sigma_\ast^4(v) = e_2 + e_4 + e_9+ e_{11} + e_{12}+ e_{14}+e_{20}+e_{21}.$$
Hence we have 
$$(I+\sigma_\ast^4)(v) = v_9+v_{11}+v_{12}+v_{14}+v_{20}+v_{21}.$$
Table \ref{table:12} gives the matrix for $I+\sigma_\ast^4: V \to V$, with respect to the basis 
$v_7,\ldots, v_{27}$, from which it can be checked 
that the last equation has no solution in $V$. 
Thus no such $g$ exists, and so $\Gamma$ is torsion-free.  
\end{proof}

\begin{table} % 12
$$
\left(
\begin{array}{ccccccccccccccccccccc}
1 & 0 & 0 & 0 & 0 & 0 & 0 & 0 & 0 & 0 & 0 & 0 & 0 & 0 & 0 & 0 & 0 & 0 & 0 & 1 & 0 \\
0 & 0 & 0 & 0 & 0 & 0 & 0 & 0 & 0 & 0 & 0 & 0 & 0 & 0 & 0 & 0 & 0 & 0 & 0 & 0 & 0 \\
1 & 1 & 1 & 1 & 0 & 1 & 0 & 1 & 0 & 1 & 1 & 1 & 0 & 0 & 1 & 0 & 1 & 0 & 1 & 0 & 0 \\
0 & 0 & 0 & 1 & 0 & 0 & 0 & 0 & 0 & 0 & 0 & 0 & 0 & 0 & 0 & 0 & 0 & 1 & 0 & 0 & 0 \\
0 & 0 & 0 & 0 & 1 & 0 & 0 & 0 & 0 & 0 & 0 & 0 & 0 & 0 & 1 & 0 & 0 & 0 & 0 & 0 & 0 \\
0 & 1 & 1 & 0 & 1 & 1 & 1 & 0 & 0 & 1 & 0 & 1 & 0 & 1 & 0 & 1 & 0 & 1 & 1 & 1 & 0 \\
0 & 0 & 0 & 0 & 0 & 0 & 1 & 0 & 0 & 0 & 0 & 0 & 0 & 0 & 0 & 0 & 1 & 0 & 0 & 0 & 0 \\
0 & 0 & 0 & 0 & 0 & 0 & 0 & 1 & 0 & 0 & 0 & 0 & 0 & 1 & 0 & 0 & 0 & 0 & 0 & 0 & 0 \\
1 & 1 & 1 & 1 & 0 & 1 & 1 & 1 & 0 & 0 & 1 & 0 & 0 & 0 & 1 & 0 & 0 & 0 & 1 & 0 & 0 \\
0 & 0 & 0 & 0 & 0 & 0 & 0 & 0 & 0 & 1 & 0 & 1 & 0 & 0 & 0 & 0 & 0 & 0 & 0 & 0 & 0 \\
0 & 0 & 0 & 0 & 0 & 0 & 0 & 0 & 0 & 0 & 1 & 0 & 0 & 0 & 0 & 1 & 0 & 0 & 0 & 0 & 0 \\
0 & 0 & 0 & 0 & 0 & 0 & 0 & 0 & 0 & 1 & 0 & 1 & 0 & 0 & 0 & 0 & 0 & 0 & 0 & 0 & 0 \\
0 & 0 & 0 & 1 & 1 & 0 & 0 & 0 & 0 & 1 & 0 & 1 & 0 & 0 & 1 & 0 & 0 & 1 & 0 & 0 & 0 \\
0 & 0 & 0 & 0 & 0 & 0 & 0 & 1 & 0 & 0 & 0 & 0 & 0 & 1 & 0 & 0 & 0 & 0 & 0 & 0 & 0 \\
0 & 0 & 0 & 0 & 1 & 0 & 0 & 0 & 0 & 0 & 0 & 0 & 0 & 0 & 1 & 0 & 0 & 0 & 0 & 0 & 0 \\
0 & 0 & 0 & 0 & 0 & 0 & 0 & 0 & 0 & 0 & 1 & 0 & 0 & 0 & 0 & 1 & 0 & 0 & 0 & 0 & 0 \\
0 & 0 & 0 & 0 & 0 & 0 & 1 & 0 & 0 & 0 & 0 & 0 & 0 & 0 & 0 & 0 & 1 & 0 & 0 & 0 & 0 \\
0 & 0 & 0 & 1 & 0 & 0 & 0 & 0 & 0 & 0 & 0 & 0 & 0 & 0 & 0 & 0 & 0 & 1 & 0 & 0 & 0 \\
0 & 0 & 0 & 0 & 0 & 0 & 0 & 0 & 0 & 0 & 0 & 0 & 0 & 0 & 0 & 0 & 0 & 0 & 0 & 0 & 0 \\
1 & 0 & 0 & 0 & 0 & 0 & 0 & 0 & 0 & 0 & 0 & 0 & 0 & 0 & 0 & 0 & 0 & 0 & 0 & 1 & 0 \\
1 & 1 & 1 & 1 & 0 & 1 & 1 & 0 & 0 & 0 & 0 & 0 & 0 & 1 & 1 & 1 & 0 & 0 & 1 & 0 & 0
\end{array}
\right)
$$
\caption{The matrix for $I+\sigma_\ast^4: V\to V$ with respect to the basis $v_7,\ldots, v_{27}$.}
\label{table:12}
\end{table}

\begin{theorem} % 8.8
The group $\Gamma$ is a torsion-free subgroup of $\Gamma^6$ with $\chi = -1$. 
\end{theorem}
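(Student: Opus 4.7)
The torsion-freeness part of the theorem is already settled by Lemma \ref{lemma:6}, so the only remaining task is to show that $\chi(\Gamma) = -1$. My plan is to compute $\chi(\Gamma)$ in two ways, using index multiplicativity for torsion-free (or virtually torsion-free) groups, and cross-check with the Gauss--Bonnet formula.

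First, I would assemble the indices already proved: by Lemma \ref{lemma:2}, $\Kappa^6$ is a complete set of coset representatives for $\Eta$ in $\Gamma_2^6$, so $[\Gamma_2^6 : \Eta] = |\Kappa^6| = 2^6 = 64$; by Lemma \ref{lemma:5}, $[\Gamma : \Eta] = 8$. From \S\ref{section:2} we have $\chi(\Gamma_2^6) = -1/8$. Since $\Eta$ is a finite-index subgroup of $\Gamma_2^6$, multiplicativity of the (rational) Euler characteristic gives
\[
\chi(\Eta) \;=\; [\Gamma_2^6 : \Eta]\,\chi(\Gamma_2^6) \;=\; 64 \cdot (-1/8) \;=\; -8,
\]
and then, since $\Eta$ is a normal subgroup of index $8$ in $\Gamma$ (Lemmas \ref{lemma:3} and \ref{lemma:5}),
\[
\chi(\Gamma) \;=\; \chi(\Eta)/[\Gamma : \Eta] \;=\; -8/8 \;=\; -1.
\]
This is the main computation.

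As an independent check, I would note that the $8P^6$ side-pairing $\mathcal{P}$ that gave rise to $\Gamma$ makes $8P^6$ a fundamental polytope for $\Gamma$ acting on $H^6$, of hyperbolic volume $8\,\mathrm{vol}(P^6) = 8\pi^3/15$ by the volume computation in \S\ref{section:2}. Since $\Gamma$ is torsion-free, $H^6/\Gamma$ is a complete hyperbolic $6$-manifold of finite volume, and the Gauss--Bonnet theorem $\mathrm{vol}(H^6/\Gamma) = \kappa_6\,\chi(\Gamma)$ with $\kappa_6 = -8\pi^3/15$ yields $\chi(\Gamma) = -1$, in agreement with the algebraic count.

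The only real obstacle is to be sure the side-pairing $\mathcal{P}$ really exhibits $\Gamma$ as a discrete subgroup with fundamental polytope $8P^6$ (so that the volume count is legitimate), but this follows from Poincar\'e's fundamental polyhedron theorem applied to the $Q^6$ side-pairing $\mathcal{Q}$ together with the conjugation action of $A$ --- the torsion-freeness verified in Lemma \ref{lemma:6} guarantees that the quotient is a manifold, so there is no orbifold correction to the Euler characteristic. Thus both routes converge and the theorem is proved.
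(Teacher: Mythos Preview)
Your proof is correct and follows essentially the same route as the paper: torsion-freeness from Lemma~\ref{lemma:6}, then the index chain $[\Gamma_2^6:\Eta]=64$ (Lemma~\ref{lemma:2}) and $[\Gamma:\Eta]=8$ (Lemma~\ref{lemma:5}) combined with $\chi(\Gamma_2^6)=-1/8$ from \S\ref{section:2} to get $\chi(\Gamma)=-1$. The Gauss--Bonnet cross-check you add is a nice consistency verification but is not needed for the argument.
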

\begin{proof}
The group $\Gamma$ is torsion-free by Lemma \ref{lemma:6}. 
As discussed in \S\ref{section:2}, we have $\chi(\Gamma^6) = -1/414,720$. 
By \cite[Lemma 16]{Ratcliffe97}--proved algebraically--the 
index of $\Gamma_2^6$ in $\Gamma^6$ is 51,840. 
Hence $\chi(\Gamma_2^6) = -51,840/414,720 = -1/8$. 
The group $\Eta$ has index 64 in $\Gamma_2^6$ by Lemma \ref{lemma:2}, and so $\chi(\Eta) = -64/8 = -8$. 
Finally, $\Eta$ has index 8 in $\Gamma$ by Lemma \ref{lemma:5},  and so $\chi(\Gamma) = -8/8 = -1$. 
\end{proof}

\begin{remark}
The same algebraic argument given in this section works for the $Q^6$ side-pairing codes 
of manifolds 1, 3, 4, 5, 6 in Table \ref{table:4} but fails for the codes of manifolds 2, 7, 8, 9 in Table 
\ref{table:4} because the last equation in the proof of Lemma \ref{lemma:6} does have a solution 
in these four cases. 
\end{remark}

\section{Final Remarks and Open Problems} \label{section:9}

The construction of our minimum volume hyperbolic 6-manifolds would not have been possible 
without the polytope $P^6$, which has the beautiful properties that it is right-angled and all its 27 sides 
are congruent to the right-angled polytope $P^5$.  
These hypercube-like properties make $P^6$ a good building block 
to construct hyperbolic 6-manifolds by gluing together copies of $P^6$ along their sides.   

The group of symmetries of $P^5$ is a spherical Coxeter group of type $D_5$. 
Hence there are $1920$ ways to isometrically glue together two copies of $P^6$ along one side of each. 
This implies that the number of ways to isometrically glue together 8 copies of $P^6$ along sides is extremely large. 
Searching for a proper side-pairing of $8P^6$ is unreasonable 
without restricting the search space of all possible side-pairings 
to a subspace where one is more likely to find a proper side-pairing. 
What is required is more insight into the geometry of $P^6$ that is afforded 
by its relationship to group theory and number theory. 

An amazing property of the hyperbolic polytope $P^6$ is that $P^6$ is a Coxeter polytope 
for the congruence two subgroup $\Gamma_2^6$ of the group $\Gamma^6$ 
of integral, positive, Lorentzian $7\times 7$ matrices, and that $\Gamma^6/\Gamma_2^6$ 
is isomorphic to the group of symmetries of $P^6$. 
This beautiful connection between hyperbolic geometry, 
the theory of Coxeter groups, and number theory gave us 
the necessary insight to find proper side-pairings of $8P^6$.  
Gluing together 8 copies of $P^6$ by a proper side-pairing yields 
a hyperbolic 6-manifold isometric to the orbit space $H^6/\Gamma$ 
of a torsion-free subgroup $\Gamma$ of $\Gamma^6$ of minimal index. 
The way we restricted the search space for proper $8P^6$ side-pairings 
was to search only for proper side-pairings so that $\Gamma\cap\Gamma_2^6$ 
has minimal index in $\Gamma_2^6$, 
which is equivalent to $[\Gamma: \Gamma\cap\Gamma_2^6] = 8$.  
It was this restriction that led us to the symmetry of order 8 of $P^6$, represented by 
the matrix $\bar{A}$, 
that supplies the twists by which we glued 8 copies of $P^6$ together 
to construct our minimum volume hyperbolic 6-manifolds in Tables \ref{table:5} and \ref{table:6}. 

Now to some open problems.

The most basic problem about hyperbolic 6-manifolds is to determine the set of all their possible volumes. 
As the volume of an even dimensional hyperbolic manifold of finite volume is proportional to its Euler characteristic, 
it is enough to solve the following problem. 

\begin{problem}\label{problem:1}
Determine the set of all possible Euler characteristics of hyperbolic 6-manifolds of finite volume.   
\end{problem}

There are also the versions of Problem \ref{problem:1} for the various subclasses of compact, noncompact, orientable, 
or arithmetic hyperbolic 6-manifolds of finite volume.  We know from the first homology group of the first manifold in Table \ref{table:4}
and a covering space argument that there are arithmetic, orientable, noncompact, hyperbolic 6-manifolds of finite volume 
with $\chi = -2^k$, for $k=0,\ldots, 7$. 

\begin{problem}\label{problem:2}
Determine the largest value of the Euler characteristic of a hyperbolic 6-manifold of finite volume 
with an infinite first homology group. 
\end{problem}

If the answer to  Problem \ref{problem:2} is $-1$, then the answer to 
Problem \ref{problem:1} is the set of all negative integers by a covering space  argument.  
Unfortunately, all nine $8P^6$ manifolds in Table \ref{table:4} have a finite first homology group. 

\begin{problem}\label{problem:3}
Classify all the torsion-free subgroups of $\Gamma^6$ of minimal index. 
\end{problem}

The groups corresponding to the nine $8P^6$ manifolds in Table \ref{table:4} are all torsion-free subgroups of $\Gamma^6$ 
of minimal index $414,720$.  
A solution of Problem \ref{problem:3} might produce a group with an infinite first homology group 
and solve Problems \ref{problem:1} and \ref{problem:2}. 

\begin{problem}\label{problem:4}
Determine if $P^6$ has the least volume among all the right-angled 
6-dimensional polytopes in $H^6$.   
\end{problem}

\begin{problem}\label{problem:5}
Determine if the hyperbolic 5-manifold, of volume $7\zeta(3)/4$, 
constructed in \cite{Ratcliffe04} by gluing together two copies of $P^5$ 
has least volume among all noncompact hyperbolic 5-manifolds. 
\end{problem}

The lower bound for the volume of a noncompact hyperbolic 5-manifold 
obtained by Kellerhals \cite{kellerhals98} suggests that $7\zeta(3)/4$ 
is close to the least volume for a noncompact hyperbolic 5-manifold. 

\begin{problem}\label{problem:6}
Determine if $P^7$ has the least number of sides among all right-angled 
7-dimensional polytopes in $H^7$. 
\end{problem}

\begin{problem}\label{problem:7}
Construct a hyperbolic 7-manifold of finite volume 
corresponding to a torsion-free subgroup of $\Gamma^7$ of minimal index.   
\end{problem}

\begin{problem}\label{problem:8}
Construct a hyperbolic 8-manifold of finite volume 
corresponding to a torsion-free subgroup of $\Gamma^8$ of minimal index.   
\end{problem}

\end{document}